\newtheorem{Theorem}{Theorem}[section]
\newtheorem{Lemma}[Theorem]{Lemma}
\newtheorem{Corollary}[Theorem]{Corollary}
\newtheorem{Proposition}{Proposition}[section]
\newtheorem{Remark}{Remark}[section]
\newtheorem{Definition}{Definition}[section]
\numberwithin{equation}{section} \allowdisplaybreaks
\allowdisplaybreaks \setlength{\textwidth}{15cm}
\begin{document}
\author{Yi Peng}
\address{College of Mathematics and Statistics, Chongqing University,
                             Chongqing, 401331,  China.}
\email{20170602018t@cqu.edu.cn}
\author{Huaqiao Wang}
\address{College of Mathematics and Statistics, Chongqing University,
                             Chongqing, 401331,  China.}
\email{wanghuaqiao@cqu.edu.cn}

\title[Non-uniqueness of the Hall-MHD equations]
{Weak solutions to the Hall-MHD equations whose singular sets in time have Hausdorff dimension strictly less than 1}
\thanks{Corresponding author: wanghuaqiao@cqu.edu.cn}
\keywords{The incompressible Hall-MHD equations, weak solutions, non-uniqueness, convex integration.}\
\subjclass[2010]{35Q35; 76W05; 35D30.}

\begin{abstract}
In this paper, we focus on the three-dimensional hyper viscous and resistive Hall-MHD equations on the torus, where the viscous and resistive exponent $\alpha\in [\rho, 5/4)$ with a fixed constant $\rho\in (1,5/4)$. We prove the non-uniqueness of a class of weak solutions to the Hall-MHD equations, which have bounded kinetic energy and are smooth in time outside a set whose Hausdorff dimension strictly less than 1. The proof is based on the construction of the non-Leray--Hopf weak solutions via a convex integration scheme.
\end{abstract}

\maketitle
\section{Introduction}
\subsection{Background and Main result}
The incompressible Hall-MHD model on the torus $\mathbb{T}^3=[-\pi,\pi]^3$ can be written as:
\begin{equation}\label{Hall-MHD6}
\begin{cases}
\partial _tu-\Delta u+{\rm div}(u\otimes u-B\otimes B)+\nabla p= 0,{\rm div}u = 0, \\
\partial _tB-\Delta B+{\rm div}(u\otimes B-B\otimes u)+\nabla\times\left((\nabla\times B)\times B\right)=0,
\end{cases}
\end{equation}
where $u,B$ and $p$ represent the velocity field, magnetic field and pressure of the fluid, respectively. In the field of physics, the Hall-MHD system \eqref{Hall-MHD6} has been studied for a long time. The Hall effect plays an important role on magnetic field reconnection in space plasmas \cite{f,hg}, star formation \cite{bt,w} and geo-dynamo \cite{mgm}, etc. Also, there are a great deal of mathematical researches on this model, we refer to \cite{adfl,cdl,CL,CS,CWW,CW1,CW,DM,DM1,DH1,dt,DH} and the references therein.

It is well known that the incompressible MHD system:
\begin{equation}\label{MHD}
\begin{cases}
\partial _tu-\Delta u+{\rm div}(u\otimes u-B\otimes B)+\nabla p= 0,{\rm div}u = 0, \\
\partial _tB-\Delta B+{\rm div}(u\otimes B-B\otimes u)=0,
\end{cases}
\end{equation}
is scaling invariance under the transformation
$$u_\lambda=\lambda u(\lambda x,\lambda^2t),\quad B_\lambda=\lambda B(\lambda x,\lambda^2t),\quad p_\lambda=\lambda^2 p(\lambda x,\lambda^2t),\quad$$
for $\lambda>0$. While for the Hall-MHD system \eqref{Hall-MHD6}, the Hall term destroys the scaling invariance. For this, defining the current function $J=\nabla\times B$, then in view of ${\rm div}B=0$, one has
$$B={\rm curl}^{-1}J:=(-\Delta)^{-1}\nabla\times J.$$
Therefore, the system \eqref{Hall-MHD6} can be written as the following extended Hall-MHD system:
\begin{equation}\label{Hall-MHD7}
\begin{cases}
\partial _tu-\Delta u+{\rm div}(u\otimes u-B\otimes B)+\nabla p= 0,{\rm div}u = 0, \\
\partial _tB-\Delta B-\nabla\times\left((u-J)\times B\right)=0,\\
\partial _tJ-\Delta J-\nabla\times\left(\nabla\times\left((u-J)\times {\rm curl}^{-1}J\right)\right)=0.
\end{cases}
\end{equation}
The extended system \eqref{Hall-MHD7} has a scaling invariance (the same for the incompressible MHD and Navier--Stokes equations.) compared to the classical system \eqref{Hall-MHD6}. We refer to \cite{dt} for more details. When the magnetic field is neglected, equation \eqref{Hall-MHD6} becomes the classical Navier-Stokes equations (NSE for short in the following paragraphs).

Previous literatures (for instance, see \cite{k1,k}) indicate scaling exponent is a very useful criterion for the well-posedness or the ill-posedness. For the NSE, the mixed Lebsgue space $L_t^\gamma L_x^p$ is critical as Ladyzenskaja-Prodi-Serrin (LPS for short) condition: $2/\gamma+d/p=1$ holds, where $d$ being the underlying spatial dimension. The Leray--Hopf solution for the NSE is unique in the critical and sub-critical spaces $L_t^\gamma L_x^p$ ($C_t L_x^p$ if $\gamma=\infty$), $2/\gamma+d/p\leq1$. While for $2/\gamma+d/p>1$, the space $L_t^\gamma L_x^p$ ($C_t L_x^p$ if $\gamma=\infty$) is called super-critical. For more details, we refer to \cite{cl} and the references therein.

There are many research results about non-uniqueness of weak solutions to the mathematical fluid mechanics in recent decades. For the 3D Euler equations, De Lellis-Sz\'ekelyhidi \cite{ds} introduced the convex integration method to construct wild solutions. Buckmaster-De Lellis-Sz\'ekelyhidi Jr-Vicol \cite{bdsv} constructed the dissipative Euler solutions below the Onsager regularity. Isett \cite{i} obtained the non-uniqueness of the 3D incompressible Euler equations in
the class $C_tC_x^\alpha$, $\alpha<1/3$. For more results of the Euler equations, see \cite{b,bdis,bds,ds1,ds2,ds3} and the references therein.
In the breakthrough work, Buckmaster-Vicol \cite{bv} proved that the weak solution of the 3D NSE is not unique in $C_tL^2_x$ by using intermittent convex integration. Later, Cheskidov-Luo \cite{cl} established the non-uniqueness of weak solutions to NSE in the super-critical spaces $L_t^\gamma L_x^\infty$, $1\leq\gamma<2$, which is sharp in view of the classical Ladyzhenskaya-Prodi-Serrin criteria. Moreover, their results do not rely on the parabolic regularization, thus they also obtained the analogous conclusions for the Euler equations. Recently, Cheskidov-Luo \cite{cl1} proved the sharp non-uniqueness for the 2D NSE in $C_tL^p_x$, $1\leq p<2$. Subsequently, Albritton-Br\'{u}e-Colombo \cite{ABC} showed non-uniqueness of Leray--Hpof weak solutions of the NSE with a special force by constructing a background solution. For the stationary NSE, Luo and Cheskidov \cite{CL, Luo} proved the non-uniqueness of weak solutions. At the same time, many researchers consider the hyperdissipative NSE. The hyperdissipative Navier--Stokes equations:
\begin{equation}\label{ns}
\begin{cases}
\partial _tu+(-\Delta)^\alpha u+{\rm div}(u\otimes u)+\nabla p=0,{\rm div}u = 0, \\
u|_{t=0}=u_0,
\end{cases}
\end{equation}
are invariant under the scaling transform $u(x,t)\mapsto\lambda^{2\alpha-1}u(\lambda x,\lambda^{2\alpha}t)$ and the energy norm $L^\infty_tL^2_x$ is invariant under this transformation as $\alpha=5/4$. For the (sub)critical regime $\alpha\geq 5/4$, Lions \cite{l} proved the uniqueness of Leray--Hopf weak solution to the system \eqref{ns} for any divergence free initial data in $L^2$. For $\alpha$ slight below $5/4$, Tao \cite{t} obtained the global well-posedness for \eqref{ns} with logarithmically supercritical dissipation. For the smooth initial data, and $1<\alpha<5/4$ in the system \eqref{ns}, Katz-Pavlovi\'c \cite{kp} proved that the Hausdorff dimension of the singular set at time of the first blow-up is at most $5-4\alpha$.

For the super-critical regime $\alpha<1/5$ and $\alpha<1/3$, the non-uniqueness of Leray solutions to \eqref{ns} is proved by \cite{cdd} and \cite{d}, respectively. For $1<\alpha<5/4$, Luo-Titi \cite{lt} obtained the non-uniqueness of the very weak solution for the system \eqref{ns}. Later, Luo-Qu \cite{lq} proved the non-uniqueness of weak solutions in $C_t^0L^2_x$ to the 2D hyperdissipative NSE \eqref{ns} for $0\leq\alpha<1$. For $1\leq\alpha<5/4$, Buckmaster-Colombo-Vicol \cite{bcv} constructed the non-Leray--Hopf weak solutions in the super-critical spaces $C^0_tL^2_x$, and they also proved such weak solutions are smooth out side a fractal set of singular times with Hausdorff dimension strictly less than 1. Very recently, Li-Qu-Zeng-Zhang \cite{lqzz} proved the sharp non-uniqueness at two endpoints of the Ladyzhenskaya-Prodi-Serrin criteria condition for the 3D hyper-viscous NSE \eqref{ns} when the viscosity $\alpha$ is beyond the Lions exponent $5/4$.

To the best of our knowledge, compared with the Navier--Stokes equations, the non-uniqueness results for the MHD or Hall-MHD equations are less plentiful. Dai \cite{DM1} proved the non-uniqueness of Leray--Hopf weak solutions to the 3D Hall-MHD equations. Recently, Beekie-Buckmaster-Vicol \cite{bbv} constructed weak solutions to the ideal MHD equations based on a Nash-type convex integration scheme with intermittent building blocks. Moreover, they also showed that the magnetic helicity of those kind of weak solutions is not a constant, which implies that Taylor's conjecture is false. While for the fractional power dissipative MHD equations, Li-Zeng-Zhang \cite{lzz} established the non-uniqueness of weak solutions by using convex integration. Later, for the 3D MHD equations, Li-Zeng-Zhang \cite{lzz1} obtained non-uniqueness of weak solutions in $L_t^\gamma L_x^\infty$, $1\leq\gamma<2$, which is sharp in view of the classical Ladyzhenskaya-Prodi-Serrin criteria, and constructed weak solutions to the hyper viscous and resistive MHD beyond the Lions exponent, in the space $L^\gamma_tW^{s,p}_x$, where the exponents $(s,\gamma,p)$ lie in two supercritical regimes. Furthermore, they also provided the observation that the Taylor's conjecture fail in the limit of a sequence of non-Leray--Hopf weak solutions for the hyperdissipative MHD equations, which contrasted to the positive result of weak ideal limit \cite{fl}, namely, the weak limit of Leray--Hopf solutions to the MHD system.
For more results about non-uniqueness of the MHD system, see \cite{df,my,ny} and the references therein.

In this paper, we consider the following hyperdissipative Hall-MHD equations on the torus $\mathbb{T}^3=[-\pi,\pi]^3$:
\begin{equation}\label{Hall-MHD1}
\begin{cases}
\partial _tu+(-\Delta)^\alpha u+{\rm div}(u\otimes u-B\otimes B)+\nabla p= 0,{\rm div}u = 0, \\
\partial _tB+(-\Delta)^\alpha B+{\rm div}(u\otimes B-B\otimes u)+\nabla\times{\rm div}\left( B\otimes B\right)=0,\\
(u,B)|_{t=0}=(u_0,B_0),
\end{cases}
\end{equation}
where $\alpha\in[\rho, \frac{5}{4})$, for some positive constant $\rho\in (1,\frac{5}{4})$. We focus on the following non-Leray--Hopf weak solution:
\begin{Definition}[Weak solution]\label{def1} Given any zero mean initial data $(u_0,B_0)\in L^2$, we say that $(u,B)\in C^0([0,T);L^2(\mathbb{T}^3))$ is a weak solution of the Cauchy problem for the Hall-MHD equations \eqref{Hall-MHD1} if $(u,B)(\cdot,t)$ is weakly divergence free for all $t\in[0,T)$, with zero mean, and
\begin{align*}
\int_{\mathbb{T}^3}u_0\varphi(\cdot,0)dx+\int_{0}^T\int_{\mathbb{T}^3}u\partial _t\varphi-u(-\Delta)^\alpha \varphi+(u\otimes u-B\otimes B):\nabla \varphi dxdt=0,
\end{align*}
\begin{align*}
\int_{\mathbb{T}^3}B_0\varphi(\cdot,0)dx+\int_{0}^T\int_{\mathbb{T}^3}&B\partial _t\varphi-B(-\Delta)^\alpha \varphi+(u\otimes B-B\otimes u):\nabla \varphi \\
&+(B\otimes B):\nabla \nabla\times\varphi dxdt=0,
\end{align*}
for any $\varphi\in C_0^\infty(\mathbb{T}^3\times[0,T))$ such that $\varphi(\cdot,t)$ is divergence free for all $t$.
\end{Definition}

Our main result is stated as follows.
\begin{Theorem}[Main result]\label{main result}
For $T>0$, let $(v_1,b_1), (v_2,b_2)\in C^0([0,T];H^4(\mathbb{T}^3))$ be two strong solutions of the Hall-MHD equations \eqref{Hall-MHD1} on $[0,T]$ with zero mean initial data $(v_1(0,x),b_1(0,x)), (v_2(0,x),b_2(0,x))$. There exists $\beta(\rho,\alpha)>0$ such that the Cauchy problem \eqref{Hall-MHD1} on $[0,T]$ with the initial data $(u,B)|_{t=0}=(v_1,b_1)|_{t=0}$ has a weak solution $(u,B)\in C^0([0,T];H^\beta(\mathbb{T}^3))$ satisfying
$$(u,B)=(v_1,b_1)\quad on \quad [0,\frac{T}{3}] \quad and\quad (u,B)=(v_2,b_2)\quad on \quad [\frac{2T}{3},T].$$
Moreover, for every such solution $(u,B)$, there exists a zero Lebesgue measure set $\Sigma_T\subset[0,T]$ with Hausdorff (in fact box-counting) dimension less than $1-\beta$ such that
$$(u,B)\in C^\infty(((0,T]\backslash\Sigma_T)\times\mathbb{T}^3).$$
\end{Theorem}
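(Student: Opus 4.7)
The plan is to build $(u,B)$ by a convex integration scheme for the relaxed Hall--MHD--Reynolds system
\begin{align*}
\partial_t u_q + (-\Delta)^\alpha u_q + \mathrm{div}(u_q\otimes u_q - B_q\otimes B_q) + \nabla p_q &= \mathrm{div}\,\mathring{R}^u_q,\\
\partial_t B_q + (-\Delta)^\alpha B_q + \mathrm{div}(u_q\otimes B_q - B_q\otimes u_q) + \nabla\!\times\!\mathrm{div}(B_q\otimes B_q) &= \mathrm{div}\,\mathring{R}^B_q,
\end{align*}
with $\mathring{R}^u_q$ symmetric traceless and $\mathring{R}^B_q$ skew-symmetric (so that $\mathrm{div}\,\mathring{R}^B_q$ is automatically divergence free), following the template of Buckmaster--Colombo--Vicol \cite{bcv} for hyperdissipative Navier--Stokes and of Li--Zeng--Zhang \cite{lzz,lzz1} for hyperdissipative MHD. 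At the initial level $q=0$ I would glue the two given strong solutions by a temporal partition of unity, taking $(u_0,B_0):=\chi(t)(v_1,b_1)+(1-\chi(t))(v_2,b_2)$ with $\chi\in C^\infty$ equal to $1$ near $[0,T/3]$ and to $0$ near $[2T/3,T]$. Since both $(v_i,b_i)$ solve \eqref{Hall-MHD1} exactly, the resulting initial Reynolds errors are smooth and supported in the central third of $[0,T]$.

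The inductive step takes $(u_q,B_q,\mathring{R}^u_q,\mathring{R}^B_q)$ satisfying the usual frequency/amplitude estimates controlled by parameters $\lambda_q\to\infty$, $\delta_q\to 0$, and produces $(u_{q+1},B_{q+1})=(u_q+w^u_{q+1},B_q+w^B_{q+1})$ with new Reynolds errors of $L^1_{t,x}$-size $\delta_{q+2}$. I would construct $(w^u,w^B)$ from intermittent (Mikado- or Beltrami-type) building blocks concentrated near a finite family of bad time subintervals $\{I^{q+1}_j\}_{j=1}^{N_{q+1}}$ of length $\tau_{q+1}$, using geometric lemmas to split $\mathring{R}^u_q=\sum_k a_k^2 W_k\otimes W_k$ and $\mathring{R}^B_q=\sum_k b_k(W_k\otimes\tilde W_k-\tilde W_k\otimes W_k)$, together with Nash-type temporal correctors to absorb the time derivatives of the amplitudes. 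The principal new difficulty compared with the MHD case is the Hall nonlinearity $\nabla\!\times\!\mathrm{div}(B\otimes B)$, which contains one extra spatial derivative: when $w^B$ oscillates at scale $\lambda_{q+1}$, inverting the divergence to absorb the Hall contribution into $\mathring{R}^B_{q+1}$ costs an additional factor $\lambda_{q+1}$ beyond the standard MHD bookkeeping. This cost must be paid by the hyperdissipative smoothing of order $\lambda_{q+1}^{-2\alpha}$, and balancing $\lambda_{q+1}^{1-2\alpha}$ against the target decay $\delta_{q+2}$ is precisely what forces $\alpha>1$ and pins down the admissible window $\alpha\in[\rho,5/4)$ together with the intermittency exponents and the Sobolev index $\beta=\beta(\rho,\alpha)>0$. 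I expect the hardest step to be designing the magnetic building block, in particular its spatial geometry, temporal support and concentration profile, so that the Hall-induced oscillation error is controlled at the same algebraic rate as the standard Reynolds errors while still keeping $\|w^B_{q+1}\|_{H^\beta}$ summable in $q$.

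Finally, since every correction $(w^u_{q+1},w^B_{q+1})$ is supported only on the bad set $G_q:=\bigcup_j I^q_j$, on the complementary good set the new iterate coincides with the previous one, so the limit $(u,B)$ is automatically $C^\infty$ on $([0,T]\setminus\Sigma_T)\times\mathbb{T}^3$ for $\Sigma_T:=\bigcap_{q_0}\bigcup_{q\ge q_0}G_q$. Choosing the parameters so that $N_q\tau_q^{s}\to 0$ for some $s<1-\beta$ yields the box-counting bound $\dim_{\mathrm{box}}\Sigma_T<1-\beta$, while the summability $\sum_q(\|w^u_{q+1}\|_{H^\beta}+\|w^B_{q+1}\|_{H^\beta})<\infty$ gives convergence of $(u_q,B_q)$ in $C^0([0,T];H^\beta)$. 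Because $G_q$ is disjoint from $[0,T/3]\cup[2T/3,T]$ at every stage by construction, the prescribed boundary identities $(u,B)=(v_1,b_1)$ on $[0,T/3]$ and $(u,B)=(v_2,b_2)$ on $[2T/3,T]$ pass to the limit.
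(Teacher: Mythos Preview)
Your overall architecture (initial gluing, convex integration with intermittent jets, bad-set bookkeeping for the singular times) matches the paper, but the mechanism you propose for cancelling the magnetic Reynolds stress will not close, and the explanation you give for the role of hyperdissipation in the Hall term is incorrect.

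Specifically, you set the magnetic stress $\mathring{R}^B_q$ to be skew-symmetric and plan to cancel it via the low-frequency part of $w_{q+1}^p\otimes d_{q+1}^p-d_{q+1}^p\otimes w_{q+1}^p$, as in the MHD scheme of \cite{lzz}. You then say the extra derivative from the Hall term $\nabla\times\mathrm{div}(d\otimes d)$ costs a factor $\lambda_{q+1}$ that is ``paid by the hyperdissipative smoothing of order $\lambda_{q+1}^{-2\alpha}$''. This is the wrong balance: hyperdissipation enters only in the \emph{linear} error $(-\Delta)^\alpha w_{q+1}$ and has no bearing on the quadratic oscillation errors. If you treat the Hall contribution as a pure error, its high-frequency part $a_\xi^2\,\mathbb{P}_{\neq 0}(D_\xi\otimes D_\xi)$, after $\nabla\times\mathrm{div}$ and one anti-divergence, is a factor $\lambda_{q+1}$ larger than the standard oscillation error; no choice of intermittency parameters in the range $\alpha<5/4$ absorbs this.

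The paper resolves this by reversing the roles: it takes $\mathring{R}^B_q$ \emph{symmetric} traceless with right-hand side $\nabla\times\mathrm{div}\,\mathring{R}^B_q$, and designs $d_{q+1}^p=\sum_\xi a_\xi\bar W_\xi$ so that the Hall nonlinearity itself performs the cancellation, i.e.\ the zero-mode of $d_{q+1}^p\otimes d_{q+1}^p$ absorbs $\mathring{R}^B_q$. The cross term $w_{q+1}^p\otimes d_{q+1}^p-d_{q+1}^p\otimes w_{q+1}^p$ is then made to \emph{vanish identically} by choosing the velocity and magnetic building blocks from disjoint index sets $\Lambda_0\cup\Lambda_1$ and $\Lambda_2\cup\Lambda_3$ with disjoint spatial supports. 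The price is that the magnetic temporal corrector must carry an extra curl, $d_{q+1}^t=-\bar\mu^{-1}\sum_\xi\nabla\times(a_\xi^2|\bar W_\xi|^2\xi)$, and to keep $\|d_{q+1}^t\|$ comparable to $\|w_{q+1}^t\|$ one must take the magnetic temporal oscillation parameter $\bar\mu\gg\mu$; the admissible window for $\bar\mu$ (large enough for $d_{q+1}^t$, small enough for $\mathcal{R}\,\mathrm{curl}^{-1}\partial_t(d_{q+1}^p+d_{q+1}^c)$) is what actually ties the construction to $\alpha\in[\rho,5/4)$.
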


\begin{Remark}
Theorem \ref{main result} implies the non-uniqueness of weak solutions to the Hall-MHD system, in the sense of Definition \ref{def1}. In fact, $0$ is not the only weak solution to the Hall-MHD equations with zero initial data.
\end{Remark}
\subsection{New ingredients of our proof}
In this subsection, we will point out the new ingredients of our proof compared to the previous literatures. In the convex integration stage, we introduce the intermittent velocity flows $W_\xi$, $\xi\in\Lambda_0\cup\Lambda_1$, and the intermittent magnetic flows $\bar{W}_\xi$, $\xi\in\Lambda_2\cup\Lambda_3$. The design of $W_\xi$ and $\bar{W}_\xi$ is almost the same, except the temporal oscillation parameter $\mu$ in $W_\xi$, but $\bar{\mu}$ in $\bar{W}_\xi$, see \eqref{wxi} and \eqref{wbarxi} below. Then we define the principle part of velocity increment $w_{q+1}$ and magnetic increment $d_{q+1}$ by
$w_{q+1}^p=\sum_{i\geq0}\sum_{\xi\in\Lambda_{(i)}}a_\xi W_\xi$,
$d_{q+1}^p=\sum_{i\geq0}\sum_{\xi\in\Lambda_{(i)+2}}a_\xi\bar{W}_\xi$, respectively, where $(i)=i\; {\rm mod}\; 2$, $i\in \mathbb{N}$.
Noted that the index sets $\Lambda_{(i)}$ and $\Lambda_{(i)+2}$ are disjoint for any $i\in \mathbb{N}$.

The Reynolds stresses $\mathring{\bar{R}}_q^u$ will be handled in a similar way in \cite{lzz}, and the temporal oscillation parameter $\mu$ in the velocity perturbations will be chose same to that of the NSE in \cite{bcv}. Because of the Hall term, the magnetic stresses will be treated different from the pattern in \cite{lzz}. More precisely, for the case of the MHD equations in \cite{lzz}, the magnetic stresses are dealt by the skew-symmetric matrix $d_{q+1}^p\otimes w_{q+1}^p-w_{q+1}^p\otimes d_{q+1}^p$, which is stemmed from the nonlinear terms in the Maxwell--Faraday equation for the magnetic field. However, in the present paper, the magnetic stresses $\mathring{\bar{R}}_q^B$ will be reduced by the zero frequency part of $d_{q+1}^p\otimes d_{q+1}^p$, which is deduced from the Hall term. Moreover, we make the contribution $d_{q+1}^p\otimes w_{q+1}^p-w_{q+1}^p\otimes d_{q+1}^p$ to $\mathring{R}_{q+1}^B$ disappear by choosing four groups of intermittent jets: $W_\xi$, $\xi\in\Lambda_0\cup\Lambda_1$, $\bar{W}_\xi$, $\xi\in\Lambda_2\cup\Lambda_3$, with disjoint spatial supports, and the fact that $\Lambda_{(i)}\cap\Lambda_{(i)+2}=\emptyset$ in the definitions $w^p_{q+1}$ and $d^p_{q+1}$. As a consequence, the construction of the temporal corrector for the magnetic field $d_{q+1}^t$ (see \eqref{dq+1t}) is totally different from that of the NSE or MHD equations. More precisely, $d_{q+1}^t$ has higher spatial derivative compared to the temporal corrector of the NSE in \cite{bcv} or MHD equations in \cite{lzz}, which has a similar form to $w_{q+1}^t$ defined in \eqref{wq+1t}. Thus, we choose the temporal oscillation parameter $\bar{\mu}$ in the magnetic perturbations much larger than $\mu$ for the purpose of making $d_{q+1}^t$ owns a comparable bounding to $w_{q+1}^t$ (see estimates \eqref{est35} and \eqref{est36}), and see \eqref{para} for the definitions of $\mu$ and $\bar{\mu}$. However, we must impose an upper bound on $\bar{\mu}$ in order to ensure the contribution $\mathcal{R}{\rm curl}^{-1}\partial_t(d_{q+1}^p+d_{q+1}^c)$ to $\mathring{R}_{q+1}^B$ is small, where $d_{q+1}^c$ is the incompressibility corrector for $d_{q+1}^p$.

For the purpose of proving the weak solutions constructed in the present paper is smooth outside a fractal set of singular times with Hausdorff dimension strictly less than 1, we employ the method of gluing in \cite{bcv}. But compared to the Navier-Stokes equations in \cite{bcv}, the Hall-MHD system \eqref{Hall-MHD1} are more complex and require more sophisticated analysis and energy estimates in the gluing stage.

\vspace{-0.6cm}
\section{Outline of the proof}\label{sec2}
The construction of the weak solutions in Theorem \ref{main result} is based on the frame work in \cite{bcv}. Through the iterative scheme, to obtain a sequence of approximate solutions to \eqref{Hall-MHD1}, our main duty is justifying Proposition \ref{iteration prop}.
\subsection{Inductive estimates and main proposition}
For $q\in \mathbb{N}$, we will work with the approximating system:
\begin{equation}\label{Hall-MHD2}
\begin{cases}
\partial _tu_q+(-\Delta)^\alpha u_q+{\rm div}(u_q\otimes u_q-B_q\otimes B_q)+\nabla p_q= {\rm div}\mathring{R}_q^u,\\
\partial _tB_q+(-\Delta)^\alpha B_q+{\rm div}(u_q\otimes B_q-B_q\otimes u_q)+\nabla\times{\rm div}(B_q\otimes B_q)=\nabla\times{\rm div}\mathring{R}_q^B,\\
{\rm div}u_q = 0,\quad {\rm div}B_q=0,
\end{cases}
\end{equation}
where $\mathring{R}_q^u$ and $\mathring{R}_q^B$ are trace-less symmetric matrix.

In order to estimate the approximation solutions $(u_q,B_q,\mathring{R}_q^u,\mathring{R}_q^B)$, inspired by \cite{bcv}, we introduce the following parameters. Fix $b=b(\rho,\alpha)>0$ as a sufficiently large integer, and $\beta=\beta(\rho,\alpha,b)>0$ as a sufficiently small constant with $\beta b\ll 1$. Define
$$\delta_q=\lambda_1^{3\beta}\lambda_q^{-2\beta},$$
where $\lambda_q=a^{b^q}$, and $a\gg1$ is a large constant to be chosen later.

For $q\in \mathbb{N}$, suppose
\begin{subequations}\label{inductive est}
\begin{align}
&\|(\mathring{R}_q^u,\mathring{R}_q^B)\|_{L^1(\mathbb{T}^3)}\leq\lambda_q^{-\varepsilon_R}\delta_{q+1},\label{RestL1}\\
&\|(\mathring{R}_q^u,\mathring{R}_q^B)\|_{H^4(\mathbb{T}^3)}\leq\lambda_q^7,\label{RestH4}\\
&\|(u_q,B_q)\|_{L^2(\mathbb{T}^3)}\leq2\delta_0^{\frac{1}{2}}-\delta_{q}^{\frac{1}{2}},\label{uBestL2}\\
&\|(u_q,B_q)\|_{H^4(\mathbb{T}^3)}\leq\lambda_q^5,\label{uBestH4},
\end{align}
\end{subequations}
for some small constant $\varepsilon_R=\varepsilon_R(b,\beta,\alpha)>0$ which will be chosen later.

For the purpose of estimating the Hausdorff dimension to the singular set of the limiting solution $(u,B)$, for $q\geq1$, choose the parameters
\begin{align}\label{vq}
&\vartheta_q=\lambda_{q-1}^{-\frac{4\rho}{\rho-1}}\delta_q^{\frac{1}{2}},
\end{align}
\begin{align}\label{tq}
&\tau_q=\vartheta_q\lambda_{q-1}^{-\frac{\varepsilon_R}{4}}=\lambda_{q-1}^{-\frac{4\rho}{\rho-1}-\frac{\varepsilon_R}{4}}\delta_q^{\frac{1}{2}}.
\end{align}
Especially, let $\tau_0=T/15$, and $\vartheta_0$ don't need to assign a value. From the definition, we have
\begin{align}
\tau_{q+1}\ll\vartheta_{q+1}\ll\tau_{q}\ll 1,
\end{align}
and
\begin{align}\label{vqtqest}
\vartheta_{q+1}\leq\lambda_q^{-20}\delta_{q+1}^{\frac{1}{2}}, \quad\tau_{q+1}\leq\lambda_q^{-20-\frac{\varepsilon_R}{4}}\delta_{q+1}^{\frac{1}{2}},
\end{align}
since $\rho\in(1,\frac{5}{4})$.

For $q\in\mathbb{N}$, we split the interval $[0,T]$ into a closed good set $\mathscr{G}^{(q)}$ and an open bad set $\mathscr{B}^{(q)}=[0,T]\backslash\mathscr{G}^{(q)}$, which obeys the following properties:
\begin{enumerate}
 \item [(\romannumeral1)] $\mathscr{G}^{(0)}=[0,\frac{T}{3}]\cup[\frac{2T}{3},T]$.
 \item [(\romannumeral2)] $\mathscr{G}^{(q-1)}\subset\mathscr{G}^{(q)}$ for every $q\geq1$.
 \item [(\romannumeral3)] $\mathscr{B}^{(q)}$ is a finite union of disjoint open intervals of length $5\tau_q$.
 \item [(\romannumeral4)] For $q\geq1$, $|\mathscr{B}^{(q)}|\leq|\mathscr{B}^{(q-1)}|10\tau_q/\vartheta_q$.
 \item [(\romannumeral5)] If $t\in \mathscr{G}^{(q')}$ for some $q'<q$, then $(u_q,B_q)(t)=(u_{q'},B_{q'})(t)$.
 \item [(\romannumeral6)] $(\mathring{R}_q^u,\mathring{R}_q^B)(t)=0$ for all $t\in[0,T]$ such that ${\rm dist}(t,\mathscr{G}^{(q)})\leq\tau_q$.
\end{enumerate}

In view of (\romannumeral6) and the parabolic regularization of the Hall-MHD equations (see \eqref{decay1}, for example), $(u_q,B_q)$is a $C^\infty$ smooth exact solution of the Hall-MHD equations on $\mathscr{G}^{(q)}\backslash\{0\}$. Combining (\romannumeral5), we know $(u_q,B_q)=(u,B)$ on $\mathscr{G}^{(q)}\backslash\{0\}$, then the limiting solution $(u,B)$ is $C^\infty $ smooth on $(\mathscr{G}^{(q)}\backslash\{0\})\times\mathbb{T}^3$. Therefore, the singular set of times of $(u,B)$ satisfies $$\Sigma_T\subset\cap_{q\geq0}\mathscr{B}^{(q)}.$$
From (\romannumeral4), for $q\geq1$, we have
$$|\mathscr{B}^{(q)}|\leq|\mathscr{B}^{(0)}|\prod^q\limits_{q'=1}10\tau_q'/\vartheta_q'\leq T10^q\prod^{q-1}\limits_{q'=0}\lambda_{q'}^{-\frac{\varepsilon_R}{4}}=T10^qa^{-\frac{\varepsilon_R}{4}\frac{b^q-1}{b-1}}\leq T10^q\lambda^{-\frac{\varepsilon_R}{8(b-1)}}.$$
By (\romannumeral3), $\mathscr{B}^{(q)}$ is a finite union of disjoint open intervals of length $5\tau_q$, thus the number of such intervals is at most
$$T10^q\lambda^{-\frac{\varepsilon_R}{8(b-1)}}(5\tau_q)^{-1}.$$
At last, by the definition of $\tau_q$ and $\lambda_q$, we infer that
\begin{align*}
{\rm dim_{box}}(\Sigma_T)&\leq\lim_{q\rightarrow\infty}\frac{\log T+q\log 10-\frac{\varepsilon_R}{8(b-1)}\log \lambda_q-\log (5\tau_q)}{-\log (5\tau_q)}\\
&=1-\lim_{q\rightarrow\infty}\frac{\log T+q\log 10-\frac{\varepsilon_R}{8(b-1)}\log \lambda_q}{\log (5\tau_q)}\\
&=1-\frac{\varepsilon_R}{8(b-1)}\frac{b(\rho-1)}{4\rho+(\rho-1)(\beta b+\varepsilon_R/4)}\\
&<1-\frac{\rho-1}{48}\varepsilon_R<1.
\end{align*}

The following iteration proposition yields our main result (Theorem \ref{main result}) and the goal of the present paper is proving this proposition.
\begin{Proposition}[Main Iteration Proposition]\label{iteration prop}
There exists a sufficiently small parameter $\varepsilon_R=\varepsilon_R(\rho,\alpha,b,\beta)\in (0,1)$ and a sufficiently large parameter $a_0=a_0(\rho,\alpha,b,\beta,\varepsilon_R)\geq1$ such that for any $a\geq a_0$ satisfying the technical condition $a^{\frac{25-24\alpha}{24}}\in \mathbb{N}$, the following statement holds: Suppose $(u_q,B_q,\mathring{R}_q^u,\mathring{R}_q^B)$ is a solution to the system \eqref{Hall-MHD2} in $[0,T]\times\mathbb{T}^3$ satisfying the inductive estimates \eqref{inductive est}, and the corresponding good set $\mathscr{G}^{(q)}$ satisfying (\romannumeral1)--(\romannumeral5) listed above. Then there exists $(u_{q+1},B_{q+1},\mathring{R}_{q+1}^u,\mathring{R}_{q+1}^B)$ solving \eqref{Hall-MHD2} and a set $\mathscr{G}^{(q+1)}$ with estimates \eqref{inductive est} and properties (\romannumeral1)--(\romannumeral5) holds with $q$ replaced by $q+1$. Moreover,
\begin{align}\label{cha}
\|(u_{q+1}-u_q,B_{q+1}-B_{q})\|_{L^2}\leq \delta_{q+1}^{\frac{1}{2}}.
\end{align}
\end{Proposition}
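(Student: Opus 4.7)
The plan is to prove Proposition \ref{iteration prop} in two major stages: a \emph{gluing stage} that produces an intermediate smooth approximation whose Reynolds and magnetic stresses are supported on a small-measure time set, followed by a \emph{convex integration stage} that cancels these stresses up to an acceptable error at the next Littlewood--Paley scale.

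\textbf{Gluing stage.} Following the scheme of Buckmaster--Colombo--Vicol adapted to the Hall--MHD system \eqref{Hall-MHD1}, I first cover the bad set $\mathscr{B}^{(q)}$ by a finite collection of time intervals $I_i$ of length $\vartheta_{q+1}$, and on each $I_i$ solve the Cauchy problem for \eqref{Hall-MHD1} with initial datum $(u_q, B_q)(t_i)$, producing local smooth strong solutions $(\tilde v_i,\tilde b_i)$. Using the parabolic regularization afforded by $(-\Delta)^\alpha$ with $\alpha>1$, the Hall term $\nabla\times \mathrm{div}(B\otimes B)$ can be treated perturbatively on these short intervals, yielding quantitative smoothing estimates strong enough to absorb its extra derivative. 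I then glue the $(\tilde v_i,\tilde b_i)$ to $(u_q,B_q)$ via a partition of unity in time adapted to $\mathscr{G}^{(q)}$, obtaining $(\bar u_q,\bar B_q, \mathring{\bar R}_q^u,\mathring{\bar R}_q^B)$ solving the analogue of \eqref{Hall-MHD2} with stresses supported on $\mathscr{B}^{(q)}\cup N_{\tau_q}(\partial \mathscr{G}^{(q)})$, and define $\mathscr{G}^{(q+1)}$ to be the enlarged good set so that properties (i)--(vi) hold at level $q+1$. The inverse-divergence operator $\mathcal R$ and its curl variant $\mathcal R\,\mathrm{curl}^{-1}$ are used to write the glued stresses in the required form.

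\textbf{Convex integration stage.} Next I construct the perturbation on each glued subinterval. I define four disjoint index sets $\Lambda_0,\Lambda_1$ (for velocity) and $\Lambda_2,\Lambda_3$ (for magnetic) and intermittent jets $W_\xi,\bar W_\xi$ with respective temporal oscillation parameters $\mu,\bar\mu$ as set up in \eqref{para}, arranged to have disjoint spatial supports across the velocity/magnetic families. The amplitudes $a_\xi$ are defined via geometric lemmas inverting $\mathrm{Id}-\mathring{\bar R}_q^u/(\rho^u\mathrm{Id})$ and the symmetric magnetic stress $\mathring{\bar R}_q^B$ in terms of, respectively, low-frequency averages of $W_\xi\otimes W_\xi$ and of $\bar W_\xi\otimes \bar W_\xi$ (the latter exploiting that the Hall term generates a symmetric $d_{q+1}^p\otimes d_{q+1}^p$ contribution rather than the skew-symmetric structure present for MHD). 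I then add the incompressibility correctors $w_{q+1}^c,d_{q+1}^c$ and temporal correctors $w_{q+1}^t,d_{q+1}^t$ of the forms referenced in \eqref{wq+1t} and \eqref{dq+1t}, set $u_{q+1}=\bar u_q+w_{q+1}$, $B_{q+1}=\bar B_q+d_{q+1}$, and solve the new stress equations using the antidivergence calculus to obtain $\mathring R_{q+1}^u,\mathring R_{q+1}^B$.

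\textbf{Estimates and the main obstacle.} The remaining work is to verify the inductive bounds \eqref{inductive est} at level $q+1$ and the increment bound \eqref{cha}. The $L^2$ and $H^4$ estimates on $(u_{q+1},B_{q+1})$ follow from intermittent $L^p$ bounds on $W_\xi,\bar W_\xi$ together with the amplitude bounds; \eqref{cha} comes from the fact that the principal perturbations have $L^2$ norm controlled by $\|\mathring{\bar R}_q\|_{L^1}^{1/2}$ via the geometric lemma. The new stresses decompose into oscillation, transport-plus-Nash, linear (hyperdissipative), corrector, and Hall-type error terms; each must beat $\lambda_{q+1}^{-\varepsilon_R}\delta_{q+2}$ in $L^1$ by choosing $\beta,\varepsilon_R$ small and $b,a$ large. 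The hardest point, and the true novelty forced by the Hall term, is calibrating $\bar\mu$: it must be taken much larger than $\mu$ so that the temporal corrector $d_{q+1}^t$, which carries an extra spatial derivative relative to $w_{q+1}^t$, is small enough to absorb $\mathbb P_{\neq 0}(d_{q+1}^p\otimes d_{q+1}^p)$ in the analogue of \eqref{est35}--\eqref{est36}, while at the same time being small enough that $\mathcal R\,\mathrm{curl}^{-1}\partial_t(d_{q+1}^p+d_{q+1}^c)$ still contributes a negligible piece to $\mathring R_{q+1}^B$. Balancing this two-sided constraint on $\bar\mu$ against $\alpha<5/4$ and $\rho>1$ is the delicate step; once it is carried out, the remaining estimates follow along the lines of the hyperdissipative NSE argument, and the cross term $d_{q+1}^p\otimes w_{q+1}^p - w_{q+1}^p\otimes d_{q+1}^p$ vanishes identically by the disjoint-support choice $\Lambda_{(i)}\cap\Lambda_{(i)+2}=\emptyset$, closing the iteration.
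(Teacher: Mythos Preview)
Your plan matches the paper's two-stage architecture (gluing in Section~\ref{sec3}, convex integration in Sections~\ref{sec4}--\ref{sec5}) and correctly identifies the novel Hall--MHD ingredients: handling $\mathring{\bar R}_q^B$ via the symmetric $d_{q+1}^p\otimes d_{q+1}^p$ from the Hall term, the disjoint-support mechanism that annihilates $w_{q+1}^p\otimes d_{q+1}^p$, and the two-sided constraint on $\bar\mu$.

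There is, however, one concrete omission that would cause the velocity oscillation estimate to fail as written. Because the momentum equation carries $-B\otimes B$, the self-interaction $-d_{q+1}^p\otimes d_{q+1}^p$ contributes a low-frequency piece
\[
G^B \;=\; \sum_{i\ge 0}\sum_{\xi\in\Lambda_{(i)+2}} a_\xi^2 \fint_{\mathbb{T}^3}\bar W_\xi\otimes\bar W_\xi\,dx
\]
of size $\|G^B\|_{L^1}\sim \delta_{q+1}\gg \lambda_{q+1}^{-\varepsilon_R}\delta_{q+2}$ to the velocity stress. If the velocity amplitudes are built from $\mathrm{Id}-\mathring{\bar R}_q^u/\rho_i$ alone, this term survives uncancelled and the $L^1$ bound on $\mathring R_{q+1}^u$ does not close. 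The fix, carried out in \eqref{GB}--\eqref{axiu} and \eqref{eq16}, is to define the velocity amplitudes from $\mathrm{Id}-(\mathring{\bar R}_q^u - G^B)/\rho_i$ and to design $\theta_u$ so that $\theta_u\equiv 1$ on $\mathrm{supp}\,G^B$ (which is why the paper uses two nested cutoffs $\theta_B,\theta_u$ with supports separated by $\tfrac12\tau_{q+1}$). With this adjustment your plan is complete and coincides with the paper's proof.
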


\subsection{Gluing stage}
The first step of proving Proposition \ref{iteration prop} is to construct a new glued solution $(\bar{u}_q,\bar{B}_q,\mathring{\bar{R}}_q^u,\mathring{\bar{R}}_q^B)$ to the approximation system \eqref{Hall-MHD2}, which inherits the inductive estimates \eqref{inductive est} of $(u_q,B_q,\mathring{R}_q^u,\mathring{R}_q^B)$. We adapt the way of \cite{bcv}. Define
$$\bar{u}_q=\sum_i\eta_i(t)u_i(x,t),\quad \bar{B}_q=\sum_i\eta_i(t)B_i(x,t),$$
where $\eta_i$ are cutoff functions supported on $[t_i,t_{i+1}+\tau_{q+1}]$ for $t_i=i\vartheta_{q+1}$, and $(u_i,B_i)$ are exact solution of the Hall-MHD system with initial data given by $(u_i,B_i)(t_{i-1})=(u_q,B_q)(t_{i-1})$ (see  below). By the parabolic regularization of the Hall-MHD system, we know that $(u_i,B_i)$ are $C^\infty$ on the support of $\eta_i$, which implies $(\bar{u}_q,\bar{B}_q)$ are $C^\infty$.

Inspired by \cite{bcv}, define the index set
\begin{align}\label{index set}
\mathscr{C}=\left\{i\in{1,\ldots,n_{q+1}}: there\; exists\; t\in [t_{i-1},t_{i+1}+\tau_{q+1}]\; with\; (\mathring{R}_q^u,\mathring{R}_q^B)\neq0\right\},
\end{align}
and the bad set at level $q+1$
\begin{align}\label{bad q+1}
\mathscr{B}^{(q+1)}=\bigcup_{i\in \mathscr{C} or i-1\in \mathscr{C}}(t_i-2\tau_{q+1},t_i+3\tau_{q+1}).
\end{align}

In Section \ref{sec3}, we will prove the following proposition:
\begin{Proposition}\label{glue prop}
There exists a solution $(\bar{u}_q,\bar{B}_q,\mathring{\bar{R}}_q^u,\mathring{\bar{R}}_q^B)$ to system \eqref{Hall-MHD2} such that
\begin{align}
&(\bar{u}_q,\bar{B}_q)=(u_q,B_q)\; on\; \mathbb{T}^3\times\mathscr{G}^{(q)},\label{eq1}\\
&(\mathring{\bar{R}}_q^u,\mathring{\bar{R}}_q^B)=0\; for\;all\;t\in[0,T]\; with \;{\rm dist}(t,\mathscr{G}^{(q+1)})\leq2\tau_{q+1}, \label{bar supp}
\end{align}
and $(\bar{u}_q,\bar{B}_q,\mathring{\bar{R}}_q^u,\mathring{\bar{R}}_q^B)$ satisfies the following estimates:
\begin{align}
&\|(\bar{u}_q,\bar{B}_q)\|_{L^2}\leq2\delta_0^{\frac{1}{2}}-\delta_q^{\frac{1}{2}},\label{barL2}\\
&\|(\bar{u}_q,\bar{B}_q)\|_{H^4}\leq2\lambda_q^5,\label{barH4}\\
&\|(\bar{u}_q-u_q,\bar{B}_q-B_q)\|_{L^2}\leq\lambda_q^{-15}\delta_{q+1}^{\frac{1}{2}}, \label{barcha}\\
&\|\partial_t^MD^N(\bar{u}_q,\bar{B}_q)\|_{H^4}\lesssim\tau_{q+1}^{-M}\vartheta_{q+1}^{-\frac{N}{2\alpha}}\lambda_q^5
\lesssim\tau_{q+1}^{-M-N}\lambda_q^5,\label{bardecay}\\
&\|(\mathring{\bar{R}}_q^u,\mathring{\bar{R}}_q^B)\|_{L^1}\leq\lambda_q^{-\frac{\varepsilon_R}{4}}\delta_{q+1},\label{barL1}\\
&\|\partial_t^MD^N(\mathring{\bar{R}}_q^u,\mathring{\bar{R}}_q^B)\|_{H^4}\lesssim\tau_{q+1}^{-M-1}\vartheta_{q+1}^{-\frac{N}{2\alpha}}\lambda_q^5
\lesssim\tau_{q+1}^{-M-N-1}\lambda_q^5,\label{barRdecay}
\end{align}
for all $N\geq0$ and $M\in \{0,1\}$.
\end{Proposition}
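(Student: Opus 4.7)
The plan is to follow the gluing scheme of Buckmaster--Colombo--Vicol \cite{bcv}, adapting it to accommodate the Hall nonlinearity. First I would partition $[0,T]$ into intervals of length $\vartheta_{q+1}$ with endpoints $t_i=i\vartheta_{q+1}$, and on each interval solve the Cauchy problem for the exact Hall-MHD system (that is, \eqref{Hall-MHD2} with zero Reynolds stress) with data $(u_i,B_i)(t_{i-1})=(u_q,B_q)(t_{i-1})$, extending the solution up to $t_{i+1}+\tau_{q+1}$. Using the inductive bound \eqref{uBestH4} together with the hyperdissipative parabolic smoothing for \eqref{Hall-MHD1}, I expect an estimate of the form $\|\partial_t^M D^N(u_i,B_i)\|_{H^4}\lesssim \vartheta_{q+1}^{-M-N/(2\alpha)}\lambda_q^5$ on the support of a smooth partition of unity $\{\eta_i\}$ adapted to the $t_i$, with $|\partial_t^M\eta_i|\lesssim\tau_{q+1}^{-M}$; this will directly produce \eqref{bardecay}.

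The next step is to control pairwise differences. Writing $v_i=u_i-u_q$ and $c_i=B_i-B_q$, these solve a linearised Hall-MHD system with zero data at $t_{i-1}$ and forcings $\mathrm{div}\,\mathring R_q^u$ and $\nabla\times\mathrm{div}\,\mathring R_q^B$. A standard energy estimate combined with \eqref{RestL1}--\eqref{RestH4} and the short length of the time window (via \eqref{vqtqest}) should yield $\|(v_i,c_i)\|_{H^N}\lesssim\vartheta_{q+1}\vartheta_{q+1}^{-N/(2\alpha)}\lambda_q^{-15}\delta_{q+1}^{1/2}$ for $0\le N\le 4$. This gives \eqref{barcha} after summation over $i$, and by the triangle inequality also controls the overlap gaps $u_i-u_{i-1}$ and $B_i-B_{i-1}$, which will enter the new Reynolds stresses.

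Setting $\bar u_q=\sum_i\eta_i u_i$ and $\bar B_q=\sum_i\eta_i B_i$, substitution into \eqref{Hall-MHD2} produces two kinds of source terms: linear contributions of the form $\sum_i(\partial_t\eta_i)u_i=\sum_i(\partial_t\eta_i)(u_i-u_{i-1})$ (using $\sum_i\partial_t\eta_i=0$) together with the magnetic analogue, and quadratic defects such as $\mathrm{div}(\bar u_q\otimes\bar u_q-\sum_i\eta_i u_i\otimes u_i)$ and its $B$- and Hall-type counterparts. The plan is then to define $\mathring{\bar R}_q^u$ as a sum of the inverse-divergence operator $\mathcal R$ applied to the linear part plus a traceless symmetric combination of $(u_i-u_j)\otimes u_j$ and $(B_i-B_j)\otimes B_j$, and analogously $\mathring{\bar R}_q^B$ with $\mathcal R$ replaced by a bounded right inverse of $\nabla\times\mathrm{div}$ acting on zero-mean divergence-free tensors. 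The estimates \eqref{barL1} and \eqref{barRdecay} then reduce to plugging the pairwise-difference bound of the previous paragraph into these formulas and using $|\partial_t^M\eta_i|\lesssim\tau_{q+1}^{-M}$. Properties \eqref{eq1} and \eqref{bar supp} are then automatic: on $\mathscr G^{(q)}$ a single $\eta_i$ equals one, and outside a $2\tau_{q+1}$-neighbourhood of $\mathscr G^{(q+1)}$ the index set $\mathscr C$ from \eqref{index set} together with the definition \eqref{bad q+1} of $\mathscr B^{(q+1)}$ forces $\mathring R_q\equiv 0$ throughout $[t_{i-1},t_{i+1}+\tau_{q+1}]$, so $u_i\equiv u_{i-1}$ and the new stress vanishes.

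The main obstacle is the pair consisting of Step 1 and the stress estimate for $\mathring{\bar R}_q^B$, both driven by the Hall term $\nabla\times\mathrm{div}(B\otimes B)$, which is one spatial derivative higher than the ordinary MHD transport. Any $H^s$ energy estimate for the subsolution must commute this quasi-linear piece through $(-\Delta)^\alpha$, and this is precisely why the standing assumption $\alpha\ge\rho>1$ is imposed: only then can the Hall commutator be absorbed into the dissipation. The choice \eqref{vq} of $\vartheta_{q+1}\sim\lambda_{q-1}^{-4\rho/(\rho-1)}\delta_q^{1/2}$ is tuned so that, when combined with $\tau_{q+1}$ from \eqref{tq}, the Step 1 regularity is strong enough and the gluing time scale short enough to close the crucial stress bound \eqref{barL1} with the prescribed gain $\lambda_q^{-\varepsilon_R/4}$. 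Verifying this balance, and in particular handling the Hall-type quadratic defect entering $\mathring{\bar R}_q^B$, is what I expect to be the technical heart of the argument.
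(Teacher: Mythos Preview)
Your overall architecture is right and matches the paper: solve exact Hall-MHD on each $[t_{i-1},t_{i+1}+\tau_{q+1}]$ with data $(u_q,B_q)(t_{i-1})$, glue with a partition of unity $\{\eta_i\}$, and read off the new stresses from the overlap errors. The smoothing estimate $\|\partial_t^M D^N(u_i,B_i)\|_{H^4}\lesssim\vartheta_{q+1}^{-M-N/(2\alpha)}\lambda_q^5$ is exactly what the paper proves (Proposition~\ref{local solu}) and yields \eqref{bardecay} as you say. Two minor corrections: the quadratic defect that appears is $\eta_i(\eta_i-1)(u_i-u_{i-1})\mathring\otimes(u_i-u_{i-1})$ and its magnetic/Hall analogues, not $(u_i-u_j)\otimes u_j$; and \eqref{eq1} follows not from ``a single $\eta_i$ equals one'' but from uniqueness: near $\mathscr G^{(q)}$ the old stress vanishes, so $(u_q,B_q)$ is already an exact solution and $u_i=u_{i-1}=u_q$ there.

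There is, however, a genuine gap in your plan for \eqref{barL1}. The linear pieces of the glued stresses are $\partial_t\eta_i\,\mathcal R(u_i-u_{i-1})$ and $\partial_t\eta_i\,\mathcal R\,\mathrm{curl}^{-1}(B_i-B_{i-1})$, and to bound these in $L^1$ you need $\|\mathcal R(u_i-u_{i-1})\|_{L^p}$ and $\|\mathcal R\,\mathrm{curl}^{-1}(B_i-B_{i-1})\|_{L^p}$ to be of size $\vartheta_{q+1}\,\|(\mathring R_q^u,\mathring R_q^B)\|_{L^p}$, i.e.\ controlled by the stress itself with \emph{no} derivative loss. Your ``standard energy estimate'' for $(v_i,c_i)$ cannot do this: an $L^2$ energy estimate on the difference system sees only the forcings $\mathrm{div}\,\mathring R_q^u$ and $\nabla\times\mathrm{div}\,\mathring R_q^B$, and bounding those via \eqref{RestL1}--\eqref{RestH4} costs powers of $\lambda_q$. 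Concretely, the best you get from $\|u_i-u_{i-1}\|_{L^2}\lesssim\vartheta_{q+1}\lambda_q^5$ is $\|\partial_t\eta_i\,\mathcal R(u_i-u_{i-1})\|_{L^1}\lesssim\tau_{q+1}^{-1}\vartheta_{q+1}\lambda_q^5=\lambda_q^{\varepsilon_R/4+5}$, which is enormous compared to the target $\lambda_q^{-\varepsilon_R/4}\delta_{q+1}$.

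What the paper does instead (Proposition~\ref{stability prop}) is pass to potentials $z=\Delta^{-1}\nabla\times v$ and $\varphi=\Delta^{-1}b$, so that $\mathcal R v$ and $\mathcal R\,\mathrm{curl}^{-1}b$ are controlled by $\|(z,\varphi)\|_{L^p}$. The equations for $(z,\varphi)$ have right-hand sides $\Delta^{-1}\nabla\times\mathrm{div}\,\mathring R_q^u$ and $\Delta^{-1}\nabla\times\mathrm{div}\,\mathring R_q^B$, which are zero-order in the stress, and the nonlinear terms are rewritten via commutator identities (the paper's \eqref{eq2}--\eqref{eq6}) so that the Duhamel argument closes in $L^p$ without losing derivatives. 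For the Hall part this means rewriting $\Delta\varphi\otimes B_q+B\otimes\Delta\varphi$ as a total Laplacian plus lower-order corrections in $\varphi$, which is precisely where the extra derivative of the Hall term is neutralised. This antiderivative stability estimate, yielding $\|(\mathcal R v,\mathcal R\,\mathrm{curl}^{-1}b)\|_{L^p}\lesssim|t-t_0|\,\|(\mathring R_q^u,\mathring R_q^B)\|_{L^p}$, is the missing ingredient in your proposal and is the actual technical heart of the gluing step.
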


\subsection{Convex integration stage}
In this step, we will construct a new approximating solution $(u_{q+1},B_{q+1},\mathring{R}_{q+1}^u,\mathring{R}_{q+1}^B)$ to \eqref{Hall-MHD2} based on $(\bar{u}_q,\bar{B}_q,\mathring{\bar{R}}_q^u,\mathring{\bar{R}}_q^B)$ designed in gluing step. More precisely, through $(\mathring{\bar{R}}_q^u,\mathring{\bar{R}}_q^B)$, we construct the perturbations
$$w_{q+1}=u_{q+1}-u_q,\quad d_{q+1}=B_{q+1}-B_q$$
by intermittent jets.

The correct for $\mathring{\bar{R}}_q^u$ is similar to that of the Navier--Stokes equation in \cite{bcv}, while the correct for $\mathring{\bar{R}}_q^B$ relies on the Hall term. In the definition of $d_{q+1}^p$, the intermittent jets $\bar{W}_\xi$ will be weighted by $a_\xi$:
$$d_{q+1}^p=\sum_{\xi}a_\xi\bar{W}_\xi,$$
where $a_\xi$ is constructed such that
\begin{align*}
\nabla\times{\rm div}\left(\mathring{\bar{R}}_q^B+d_{q+1}^p\otimes d_{q+1}^p\right)\thicksim\bar{\mu}^{-1}\partial_t\sum_{\xi}\nabla\times\left(a^2_\xi\bar{W}_\xi^2\xi\right)+(high\;frequency\;error),
\end{align*}
for some large parameter $\bar{\mu}$. The high frequency error can be ignored in convex integration schemes, while the first part would be cancelled out by temporal corrector $d_{q+1}^t$:
$$d_{q+1}^t=-\bar{\mu}^{-1}\sum_\xi\nabla\times\left(a^2_\xi|\bar{W}_\xi|^2\xi\right).$$
The divergence corrector $d_{q+1}^c$ is designed to make sure ${\rm div}(d_{q+1}^p+d_{q+1}^c)=0$, hence the perturbation
$$d_{q+1}:=d_{q+1}^p+d_{q+1}^c+d_{q+1}^t$$
is divergence free.

Inspired by \cite{lzz}, define $w_{q+1}^p$ as
$$w_{q+1}^p=\sum_{\xi}a_\xi W_\xi,$$
where $a_\xi$ is constructed such that
\begin{align*}
{\rm div}&\left(\mathring{\bar{R}}_q^u+w_{q+1}^p\otimes w_{q+1}^p-d_{q+1}^p\otimes d_{q+1}^p\right)\\
&\thicksim\partial_t\left(\mu^{-1}\sum_\xi\mathbb{P}_{H}\mathbb{P}_{\neq0}\left(a^2_\xi |W_\xi|^2\xi\right)-\bar{\mu}^{-1}\sum_{\xi}\mathbb{P}_{H}\mathbb{P}_{\neq0}\left(a^2_\xi|\bar{W}_\xi|^2\xi\right)\right)\\
&+(pressure\; gradient)+(high\;frequency\;error),
\end{align*}
for some large parameter $\mu$. Thus the temporal corrector $w_{q+1}^t$ is defined as:
$$w_{q+1}^t=-\mu^{-1}\sum_\xi\mathbb{P}_{H}\mathbb{P}_{\neq0}\left(a^2_\xi |W_\xi|^2\xi\right)+\bar{\mu}^{-1}\sum_{\xi}\mathbb{P}_{H}\mathbb{P}_{\neq0}\left(a^2_\xi|\bar{W}_\xi|^2\xi\right),$$
where $\mathbb{P}_{H}$ is the Helmholtz projection, and $\mathbb{P}_{\neq0}$ is the projection onto the functions with zero mean. In other words, $\mathbb{P}_{H}f=f-\nabla(\Delta{\rm div}f)$ and $\mathbb{P}_{\neq0}f=f-\fint_{\mathbb{T}^3}f$.
The divergence corrector $w_{q+1}^c$ is designed to make sure ${\rm div}(w_{q+1}^p+w_{q+1}^c)=0$, hence the perturbation
$$w_{q+1}:=w_{q+1}^p+w_{q+1}^c+w_{q+1}^t$$
is divergence free.

The intermittent jets will be designed to have disjoint supports through shifts. To see such shifts exist, note that the intermittent jets are confined to a cylinder of diameter $\thicksim \lambda_{q+1}^{-1}$ and length $\thicksim \ell_{\parallel}\ell_{\perp}^{-1}\lambda_{q+1}^{-1}$. Thus we have
$$w_{q+1}^p\otimes d_{q+1}^p-d_{q+1}^p\otimes w_{q+1}^p=\sum_{\xi\neq\xi'}a_\xi a_{\xi'}W_\xi\otimes\bar{W}_{\xi'}-\sum_{\xi\neq\xi'}a_\xi a_{\xi'}\bar{W}_{\xi'}\otimes W_\xi=0.$$

Similar to \cite{bcv}, we define parameters:
\begin{align}\label{para}
\ell_{\perp}=\lambda_{q+1}^{-\frac{20\alpha-1}{24}},\quad\ell_{\|}=\lambda_{q+1}^{-\frac{20\alpha-13}{12}},\quad\mu=\lambda_{q+1}^{2\alpha-1}\ell_{\|}\ell_{\perp}^{-1},\quad
\bar{\mu}=\lambda_{q+1}^{\frac{5}{12}(5\alpha-\frac{1}{4})}\ell_{\|}\ell_{\perp}^{-1}.
\end{align}
For the purpose of bounding $d_{q+1}^t$, $\bar{\mu}$ is chosen much larger than $\mu$. For technical reasons, we assume $a^{\frac{25-24\alpha}{24}}\in \mathbb{N}$, therefore we have $\lambda_{q+1}\ell_{\perp}\in\mathbb{N}$.

\subsection{Proof of Theorem \ref{main result}}
Recall Theorem \ref{main result}, let $(v_1,b_1), (v_2,b_2)\in C^0([0,T];H^4(\mathbb{T}^3))$ be two strong solutions of the Hall-MHD equations \eqref{Hall-MHD1} on $[0,T]$, with zero mean initial data. Let $\eta(t):[0,T]\rightarrow[0,1]$ be a smooth cut-off function such that $\eta=1$ on $[0,\frac{2T}{5}]$ and $\eta=0$ on $[\frac{3T}{5},T]$. Define
\begin{align}
&u_0(x,t)=\eta(t)v_1(x,t)+(1-\eta(t))v_2(x,t),\notag\\
&B_0(x,t)=\eta(t)b_1(x,t)+(1-\eta(t))b_2(x,t),\notag\\
&\mathring{R}_0^u=\partial_t\eta\mathcal{R}(v_1-v_2)+\eta(\eta-1)\left((v_1-v_2)\mathring{\otimes}(v_1-v_2)-(b_1-b_2)\mathring{\otimes}(b_1-b_2)\right),\label{Ru0}\\
&\mathring{R}_0^B=\partial_t\eta\mathcal{R}{\rm curl}^{-1}(b_1-b_2)\notag\\
&\quad\quad\quad+\eta(\eta-1)\mathcal{R}{\rm curl}^{-1}{\rm div}\left((v_1-v_2)\otimes(b_1-b_2)-(v_1-v_2)\otimes(b_1-b_2)\right)\notag\\
&\quad\quad\quad+\eta(\eta-1)(b_1-b_2)\mathring{\otimes}(b_1-b_2),\label{Rb0}
\end{align}
where $a\mathring{\otimes}b$ denotes the traceless part of $a\otimes b$, ${\rm curl}^{-1}$ is an operator on divergence free field defined by
$${\rm curl}^{-1}f=(-\Delta)^{-1}\nabla\times f,\quad {\rm for}\quad {\rm div}f=0,$$
and $\mathcal{R}$ is the inverse divergence operator, defined by
$$(\mathcal{R}u)^{k\ell}=(\partial_k\Delta^{-1}u^\ell+\partial_\ell\Delta^{-1}u^k)-\frac{1}{2}(\delta_{k\ell}+\partial_k\partial_\ell\Delta^{-1}){\rm div}\Delta^{-1}u,$$
for $\int_{\mathbb{T}^3}udx=0$. Note that ${\rm div}{\rm div}(S^{ij})=0$ whenever $S^{ij}$ is a shew-symmetric martix. The operator $\mathcal{R}$ returns symmetric and traceless matrices and one has ${\rm div}(\mathcal{R}u)=u$. We abuse the notation $\mathcal{R}u:=\mathcal{R}(u-\int_{\mathbb{T}^3}udx)$ for vector field $u$ with $\int_{\mathbb{T}^3}udx\neq0$. Moreover, $|\nabla|\mathcal{R}$ is Calderon--Zygmund operator, thus it is bounded in $L^p$ for $1<p<\infty$. See \cite{bcv} for more details.

Noted that $(u_0,B_0,\mathring{R}_0^u,\mathring{R}_0^B)$ solves the system \eqref{Hall-MHD2}. Choose $a\geq a_0$ large enough such that $(u_0,B_0,\mathring{R}_0^u,\mathring{R}_0^B)$ obeys \eqref{RestL1}--\eqref{uBestH4}. From the definition of $(\mathring{R}_0^u,\mathring{R}_0^B)$ in \eqref{Ru0} and \eqref{Rb0}, we know that $supp(\mathring{R}_0^u,\mathring{R}_0^B)\subset[\frac{2T}{5},\frac{3T}{5}]\times\mathbb{T}^3$. Thus (\romannumeral6) in Section \ref{sec2} holds for $\tau_0=\frac{T}{15}$ and $\mathscr{G}^{(0)}=[0,\frac{T}{3}]\cup[\frac{2T}{3},T]$. By using \eqref{cha} in Proposition \ref{iteration prop}, \eqref{uBestL2}, \eqref{uBestH4} and the interpolation, we have
\begin{align*}
\sum_{q=0}^{\infty}\|u_{q+1}-u_q\|_{\dot{H}^{\beta'}}&\lesssim\sum_{q=0}^{\infty}\|u_{q+1}-u_q\|_{L^2}^{1-\beta'/4}\left(\|u_{q+1}\|_{H^4}+\|u_q\|_{H^4}\right)^{\beta'/4}\\
&\lesssim \lambda_1^{\frac{3\beta}{2}(1-\frac{\beta'}{4})}\sum_{q=0}^{\infty}\lambda_{q+1}^{\frac{\beta'(\beta+5)}{4}-\beta}\lesssim1,
\end{align*}
for $0\leq\beta'<\frac{\beta}{\beta+5}$. Similarly, we have
$$\sum_{q=0}^{\infty}\|B_{q+1}-B_q\|_{\dot{H}^{\beta'}}\lesssim1.$$
Thus, define
$$(u,B)=\lim_{q\rightarrow\infty}(u_q,B_q)\;{\rm in}\; H^{\beta'}.$$
Since $\|(\mathring{R}_q^u,\mathring{R}_q^B)\|_{L^1}\rightarrow 0$ as $q\rightarrow\infty$ and $(u_q,B_q)\rightarrow(u,B)$ in $L^\infty_tL^{2+\beta''}$ for some $\beta''>0$, $(u,B)$ is a weak solution of Hall-MHD system. Furthermore, in view of (\romannumeral1), (\romannumeral5) in Section \ref{sec2} and the definition of $(u_0,B_0)$, we have
$$(u,B)=(v_1,b_1)\quad {\rm on} \quad [0,\frac{T}{3}] \quad and\quad (u,B)=(v_2,b_2)\quad {\rm on} \quad [\frac{2T}{3},T].$$

\section{Gluing step}\label{sec3}
\subsection{Local in time estimates}
\begin{Proposition}\label{local solu}
Let $(u,B)\mid_{t=t_0}=(u_0,B_0)\in H^4(\mathbb{T}^3)$ with zero mean be the initial data of system \eqref{Hall-MHD1}. Then there exists a small constant $c$ such that for any $t_1$ satisfying
\begin{align}\label{T1}
0<t_1-t_0\leq c\left(1+\|(u_0,B_0)\|_{L^2}\right)^{-\frac{3}{4}}\left\|(u_0,B_0)\right\|_{H^4}^{-\frac{5}{4}},
\end{align}
there exists a unique strong solution $(u,B)$ to \eqref{Hall-MHD1} on $[t_0,t_1)$ with
\begin{align}
\sup_{t\in[t_0,t_1]}\|(u,B)(t)\|_{L^2}\leq\|(u_0,B_0)\|_{L^2},\label{L2}\\
\sup_{t\in[t_0,t_1]}\|(u,B)(t)\|_{H^4}\leq2\|(u_0,B_0)\|_{H^4}.\label{H4}
\end{align}
Moreover, if
\begin{align}\label{T2}
0<t_1-t_0\leq c\left(\left(1+\|(u_0,B_0)\|_{L^2}\right)^{-\frac{5}{8}}\left\|(u_0,B_0)\right\|_{H^4}^{-\frac{3}{8}}\right)^{\frac{\alpha}{\alpha-1}},
\end{align}
we have
\begin{align}\label{decay1}
\sup_{t\in(t_0,t_1]}|t-t_0|^{M+\frac{N}{2\alpha}}\|\partial_t^MD^N(u,B)(t)\|_{H^4}\lesssim\|(u_0,B_0)\|_{H^4},
\end{align}
for any $N\geq0$ and $M\in\{0,1\}$.
\end{Proposition}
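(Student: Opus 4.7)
I would treat this as a standard local well-posedness statement in which the real content is the a priori estimates \eqref{L2}, \eqref{H4}, and \eqref{decay1}; actual existence on a short interval can be supplied by a Galerkin scheme (or a Picard iteration in $C_tH^4$) combined with a continuation argument based on these bounds. The two non-standard features are the Hall nonlinearity $\nabla\times((\nabla\times B)\times B)$, which is quadratic with second-order derivatives on $B$, and the need to track how the hyperdissipation exponent $\alpha\in(1,5/4)$ enters the size of the time interval and the smoothing rate.

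For \eqref{L2} I would pair the $u$-equation with $u$ and the $B$-equation with $B$, use $\mathrm{div}\,u=\mathrm{div}\,B=0$, and exploit the $L^2$ orthogonality
\[
\int_{\mathbb{T}^3}\nabla\times((\nabla\times B)\times B)\cdot B\,dx=\int_{\mathbb{T}^3}((\nabla\times B)\times B)\cdot(\nabla\times B)\,dx=0.
\]
All cubic terms cancel and one obtains $\tfrac12\tfrac{d}{dt}\|(u,B)\|_{L^2}^2+\|(u,B)\|_{\dot H^\alpha}^2=0$, giving \eqref{L2}. For \eqref{H4} I would apply $D^4$ to both equations and test against $D^4u$, $D^4B$; the hard contribution is the Hall piece, where an integration by parts moves one derivative off $D^4(\nabla\times B)$ and then Kato--Ponce/Moser commutator estimates, together with the Sobolev algebra $H^4(\mathbb{T}^3)\hookrightarrow W^{2,\infty}$, reduce everything to a differential inequality of schematic form
\[
\tfrac{d}{dt}\|(u,B)\|_{H^4}^2+\|(u,B)\|_{\dot H^{4+\alpha}}^2\;\lesssim\;\bigl(1+\|(u,B)\|_{L^2}\bigr)^{a}\|(u,B)\|_{H^4}^{b},
\]
with top-order factors absorbed into the hyperdissipation via interpolation between $L^2$ and $\dot H^{4+\alpha}$. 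Solving this ODE and feeding in the $L^2$ control from the previous step yields \eqref{H4} on any interval compatible with \eqref{T1}; the precise exponents $-3/4$ and $-5/4$ come out of the specific interpolation weights needed to balance the nonlinearity.

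For the smoothing estimate \eqref{decay1} I would run an inductive weighted energy argument. With $\|(u,B)\|_{H^4}$ already under control on $[t_0,t_1]$, for each $N\geq 0$ I multiply the $H^{4+N}$ energy identity by $(t-t_0)^{N/\alpha}$ and integrate in time; the hyperdissipation pays for the derivative of the time weight by producing a lower Sobolev norm controlled at the previous inductive step, while the quadratic and Hall contributions are handled by the same commutator/interpolation scheme as in the $H^4$ estimate. The stronger smallness \eqref{T2}, whose characteristic exponent $\alpha/(\alpha-1)$ reflects exactly the gap between the $H^4$ norm and the $\dot H^{4+\alpha}$ norm used in absorption, provides enough room to close the induction uniformly in $N$. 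The time-derivative case $M=1$ is then read off directly from the equations \eqref{Hall-MHD1}, using the $N$ and $N+2\alpha$ versions of the spatial bound for the linear and nonlinear pieces respectively.

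The principal obstacle is the Hall term's structure: it is the only source of a nonlinearity with two derivatives on $B$, so closing the $H^4$ estimate requires carefully distributing the $D^4$ inside the outer curl, integrating by parts to avoid losing derivatives, and then exploiting the extra $\alpha>1$ smoothing to absorb the borderline commutator terms into the dissipation. Everything else (the transport-type MHD nonlinearities and the pressure) is standard once this issue is resolved, and the sharp exponents in \eqref{T1}--\eqref{T2} then emerge mechanically from which interpolation one uses to close the estimates.
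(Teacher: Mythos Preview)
Your treatment of \eqref{L2} and \eqref{H4} matches the paper almost exactly: the $L^2$ identity uses the same Hall-term orthogonality you wrote down, and at the $H^4$ level the paper likewise runs a commutator estimate, with the top Hall contribution producing a $\|B\|_{\dot H^5}$ factor that is absorbed into the dissipation $\|B\|_{\dot H^{4+\alpha}}^2$ precisely because $\alpha>1$; the specific exponents in \eqref{T1} then fall out of Gagliardo--Nirenberg interpolation between $L^2$ and $\dot H^4$, as you anticipated.

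For the smoothing estimate \eqref{decay1} the paper takes a genuinely different route. Instead of your weighted-energy induction, it writes the solution in Duhamel form against the semigroup $e^{-(-\Delta)^\alpha t}$, uses the pointwise decay $\|(-\Delta)^{\nu/2}e^{-(-\Delta)^\alpha t}\|_{L^r\to L^r}\lesssim t^{-\nu/(2\alpha)}$, and inducts on $N$ by splitting the time integral at $t/2$. The Hall term again forces a bootstrap: at step $N$ it generates $\int_{t/2}^t (t-s)^{-1/\alpha}\|B\|_{L^\infty}\|D^N B\|_{H^4}\,ds$, and closing this under \eqref{T2} is what produces the $\alpha/(\alpha-1)$ exponent you correctly identified. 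Your weighted-energy scheme would also close (the Hall commutator at level $4+N$ gives $\|B\|_{\dot H^{5+N}}$, absorbable since $5+N\le 4+N+\alpha$), but the Duhamel approach handles the fractional order $2\alpha$ in the $M=1$ step more cleanly than the ``$N+2\alpha$ versions'' you would otherwise need to interpolate for.
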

\begin{proof}
Without loss of generality, we prove this proposition with $t_0=0$. The $L^2$ estimate could be obtained by the conservation law:
\begin{align*}
\frac{1}{2}\frac{d}{dt}\|(u,B)\|^2_{L^2}+\|(u,B)\|^2_{\dot{H}^\alpha}=0.
\end{align*}
By using the commutator estimates and the Gagliardo--Nirenberg--Sobolev inequality, for $|\gamma|=4$, we have
\begin{align*}
\frac{1}{2}&\frac{d}{dt}\|\partial^\gamma(u,B)\|^2_{L^2}+\|\partial^\gamma(u,B)\|^2_{\dot{H}^{\alpha}}\\
&=\left([\partial^\gamma,B\cdot\nabla]B,\partial^\gamma u\right)-\left([\partial^\gamma,u\cdot\nabla]u,\partial^\gamma u\right)+\left([\partial^\gamma,B\cdot\nabla]u,\partial^\gamma B\right)-\left([\partial^\gamma,u\cdot\nabla]B,\partial^\gamma B\right)\\
&\quad+\left([\partial^\gamma,B\times](\nabla\times B),\partial^\gamma( \nabla\times B)\right)\\
&\lesssim \|u\|_{\dot{H}^4}\left(\|\nabla B\|_{L^\infty}\|B\|_{\dot{H}^4}+\|\nabla u\|_{L^\infty}\|u\|_{\dot{H}^4}\right)\\
&\quad+\|B\|_{\dot{H}^4}\left(\|\nabla B\|_{L^\infty}\|u\|_{\dot{H}^4}+\|\nabla u\|_{L^\infty}\|B\|_{\dot{H}^4}\right)
+\|B\|_{\dot{H}^5}\|\nabla B\|_{L^\infty}\|B\|_{\dot{H}^4}\\
&\leq\frac{1}{2}\|B\|_{\dot{H}^5}^2+C\|B\|_{L^2}^{\frac{3}{4}}\|B\|_{\dot{H}^4}^{\frac{13}{4}}+C\|(u,B)\|_{L^2}^{\frac{3}{8}}\|(u,B)\|_{\dot{H}^4}^{\frac{21}{8}}.
\end{align*}
Thus taking summation of $\gamma$, by \eqref{L2} we have
\begin{align}\label{est1}
&\|(u,B)(t)\|_{\dot{H}^4}^2+\int_0^t\|(u,B)(s)\|^2_{\dot{H}^{4+\alpha}}ds\notag\\
&\leq\|(u_0,B_0)(t)\|_{\dot{H}^4}^2+C\int_0^t\|(u_0,B_0)\|_{L^2}^{\frac{3}{4}}\|B(s)\|_{\dot{H}^4}^{\frac{13}{4}}+\|(u_0,B_0)\|_{L^2}^{\frac{3}{8}}\|(u,B)(s)\|_{\dot{H}^4}^{\frac{21}{8}}ds.
\end{align}
Suppose $\|(u,B)(t)\|_{\dot{H}^4}^2\leq2\|(u_0,B_0)(t)\|_{\dot{H}^4}^2$ for $t\in(0,t_1]$, then by \eqref{est1}, we have
\begin{align*}
&\|(u,B)(t)\|_{\dot{H}^4}^2+\int_0^t\|(u,B)(s)\|^2_{\dot{H}^{4+\alpha}}ds\notag\\
&\leq\|(u_0,B_0)(t)\|_{\dot{H}^4}^2\left(1+Ct_1\left(\|(u_0,B_0)\|_{L^2}^{\frac{3}{4}}\|(u_0,B_0)\|_{\dot{H}^4}^{\frac{5}{4}}+
\|(u_0,B_0)\|_{L^2}^{\frac{3}{8}}\|(u_0,B_0)\|_{\dot{H}^4}^{\frac{5}{8}}\right)\right).
\end{align*}
Bootstrap principle and \eqref{T1} yields that \eqref{H4}. The $L^\infty H^4$ prior estimate is sufficient to build uniqueness, see \cite{cdl} for example.

Now, we focus on the higher regularity estimates \eqref{decay1}. The mild solution of \eqref{Hall-MHD1} can be written as
\begin{align}
&u=e^{(-\Delta)^\alpha t} u_0+\int_0^te^{(-\Delta)^\alpha (t-s)}\mathbb{P}_{H}{\rm div}(B\otimes B-u\otimes u)(s)ds,\label{duhamelu}\\
&B=e^{(-\Delta)^\alpha t} B_0+\int_0^te^{(-\Delta)^\alpha (t-s)}\left({\rm div}(B\otimes u-u\otimes B)-\nabla\times {\rm div}(B\otimes B)\right)(s)ds.\label{duhamelb}
\end{align}
We first prove the case of $M=0$ and $N=1$. Employing the Gagliardo--Nirenberg--Sobolev inequality, \eqref{L2} and \eqref{H4}, we have {\small
\begin{align*}
t^{\frac{1}{2\alpha}}\|Du\|_{H^4}&\lesssim \|u_0\|_{H^4}+t^{\frac{1}{2\alpha}}\int_0^t(t-s)^{-\frac{1}{\alpha}}\|(u,B)\|_{L^\infty}\|(u,B)\|_{H^4}ds\\
&\lesssim\|(u_0,B_0)\|_{H^4}(1+t^{1-\frac{1}{2\alpha}}\|(u_0,B_0)\|_{L^2}^{\frac{5}{8}}\|(u_0,B_0)\|_{H^4}^{\frac{3}{8}}),\\
t^{\frac{1}{2\alpha}}\|DB\|_{H^4}
&\lesssim \|B_0\|_{H^4}+t^{\frac{1}{2\alpha}}\int_0^t(t-s)^{-\frac{1}{\alpha}}\left(\|(u,B)\|_{L^\infty}\|(u,B)\|_{H^4}
+\|\nabla B\|_{L^\infty}\|B\|_{H^4}\right)ds\\
&\quad+t^{\frac{1}{2\alpha}}\int_0^t(t-s)^{-\frac{1}{\alpha}}\|B\|_{L^\infty}\|\nabla B\|_{H^4}ds\\
&\lesssim\|(u_0,B_0)\|_{H^4}\left(1+t^{1-\frac{1}{2\alpha}}\left(\|(u_0,B_0)\|_{L^2}^{\frac{5}{8}}\|(u_0,B_0)\|_{H^4}^{\frac{3}{8}}
+\|(u_0,B_0)\|_{L^2}^{\frac{3}{8}}\|(u_0,B_0)\|_{H^4}^{\frac{5}{8}}\right)\right)\\
&\quad+t^{\frac{1}{2\alpha}}\int_0^t(t-s)^{-\frac{1}{\alpha}}\|\nabla B\|_{H^4}ds\|(u_0,B_0)\|_{L^2}^{\frac{5}{8}}\|(u_0,B_0)\|_{H^4}^{\frac{3}{8}}.
\end{align*}}
\!\!Thus combining the above argument with \eqref{T2}, we infer that
\begin{align*}
t^{\frac{1}{2\alpha}}\|D(u,B)(t)\|_{H^4}&\lesssim\|(u_0,B_0)\|_{H^4}\\
&\quad+t^{\frac{1}{2\alpha}}\int_0^t(t-s)^{-\frac{1}{\alpha}}\|\nabla B\|_{H^4}ds\|(u_0,B_0)\|_{L^2}^{\frac{5}{8}}\|(u_0,B_0)\|_{H^4}^{\frac{3}{8}}.
\end{align*}
If $t^{\frac{1}{2\alpha}}\|D(u,B)(t)\|_{H^4}\leq2C\|(u_0,B_0)\|_{H^4}$, we have
\begin{align*}
t^{\frac{1}{2\alpha}}\|D(u,B)(t)\|_{H^4}\leq C\|(u_0,B_0)\|_{H^4}\left(1+2Ct^{1-\frac{1}{\alpha}}\|(u_0,B_0)\|_{L^2}^{\frac{5}{8}}\|(u_0,B_0)\|_{H^4}^{\frac{3}{8}}\right).
\end{align*}
Combining bootstrap principle and \eqref{T2}, we get \eqref{decay1} holds for $M=0$ and $N=1$.

Now we prove the case of $M=0$ and $N>1$ by induction. Suppose $t^{\frac{n}{2\alpha}}\|D^n(u,B)(t)\|_{H^4}\lesssim\|(u_0,B_0)\|_{H^4}$ for $1\leq n\leq N-1$, then by Leibniz's rule, the Gagliardo--Nirenberg--Sobolev inequality, \eqref{L2}, \eqref{H4} and \eqref{T2}, we have for $t\in (0,t_1]:$ {\small
\begin{align*}
&\|D^{N-1}\left(B\otimes B-u\otimes u\right)\|_{H^4}+\|D^{N-1}\left(B\otimes u-u\otimes B\right)\|_{H^4}\\
&\lesssim \sum_{N'=0}^{N-1}\|D^{N-1-N'}(u,B)\|_{H^4}\|D^{N'}(u,B)\|_{L^\infty}\\
&\lesssim \sum_{N'=0}^{N-1}\|D^{N-1-N'}(u,B)\|_{H^4}\|D^{N'}(u,B)\|_{L^2}^{\frac{5}{8}}\|D^{N'}(u,B)\|_{H^4}^{\frac{3}{8}}\\
&\lesssim \|D^{N-1}(u,B)\|_{H^4}\|(u,B)\|_{L^2}^{\frac{5}{8}}\|(u,B)\|_{H^4}^{\frac{3}{8}}\\
&\quad+\|D^{N-2}(u,B)\|_{H^4}\|D(u,B)\|_{H^4}^{\frac{3}{8}}\|(u,B)\|_{L^2}^{\frac{15}{32}}\|(u,B)\|_{H^4}^{\frac{5}{32}}\\
&\quad+\|D^{N-3}(u,B)\|_{H^4}\|D^2(u,B)\|_{H^4}^{\frac{3}{8}}\|(u,B)\|_{L^2}^{\frac{5}{16}}\|(u,B)\|_{H^4}^{\frac{5}{16}}\\
&\quad+\|D^{N-4}(u,B)\|_{H^4}\|D^3(u,B)\|_{H^4}^{\frac{3}{8}}\|(u,B)\|_{L^2}^{\frac{5}{32}}\|(u,B)\|_{H^4}^{\frac{15}{32}}\\
&\quad+\sum_{N'=4}^{N-1}\|D^{N-1-N'}(u,B)\|_{H^4}\|D^{N'}(u,B)\|_{H^4}^{\frac{3}{8}}\|D^{N'-4}(u,B)\|_{H^4}^{\frac{5}{8}}\\
&\lesssim t^{\frac{1-N}{2\alpha}}\|(u_0,B_0)\|_{L^2}^{\frac{5}{8}}\|(u_0,B_0)\|_{H^4}^{\frac{11}{8}}
+t^{\frac{2-N}{2\alpha}-\frac{3}{16\alpha}}\|(u_0,B_0)\|_{L^2}^{\frac{15}{32}}\|(u_0,B_0)\|_{H^4}^{\frac{49}{32}}\\
&\quad+t^{\frac{3-N}{2\alpha}-\frac{3}{8\alpha}}\|(u_0,B_0)\|_{L^2}^{\frac{5}{16}}\|(u_0,B_0)\|_{H^4}^{\frac{27}{16}}
+t^{\frac{4-N}{2\alpha}-\frac{9}{16\alpha}}\|(u_0,B_0)\|_{L^2}^{\frac{5}{32}}\|(u_0,B_0)\|_{H^4}^{\frac{59}{32}}\\
&\quad+t^{\frac{1-N}{2\alpha}+\frac{5}{4\alpha}}\|(u_0,B_0)\|_{H^4}^{2}\\
&\lesssim t^{-\frac{N}{2\alpha}}\|(u_0,B_0)\|_{H^4},
\end{align*}}
and
{\small
\begin{align*}
&\sum_{N'=1}^{N}\|D^{N-N'}B\|_{H^4}\|D^{N'}B\|_{L^\infty}\\
&\lesssim \sum_{N'=1}^{N}\|D^{N-N'}B\|_{H^4}\|D^{N'}B\|_{L^2}^{\frac{5}{8}}\|D^{N'}B\|_{H^4}^{\frac{3}{8}}\\
&\lesssim \|D^{N-1}B\|_{H^4}\|DB\|_{H^4}^{\frac{3}{8}}\|B\|_{L^2}^{\frac{15}{32}}\|B\|_{H^4}^{\frac{5}{32}}
+\|D^{N-2}B\|_{H^4}\|D^2B\|_{H^4}^{\frac{3}{8}}\|B\|_{L^2}^{\frac{5}{16}}\|B\|_{H^4}^{\frac{5}{16}}\\
&\quad+\|D^{N-3}B\|_{H^4}\|D^3B\|_{H^4}^{\frac{3}{8}}\|B\|_{L^2}^{\frac{5}{32}}\|B\|_{H^4}^{\frac{15}{32}}
+\sum_{N'=4}^{N}\|D^{N-N'}B\|_{H^4}\|D^{N'}B\|_{H^4}^{\frac{3}{8}}\|D^{N'-4}B\|_{H^4}^{\frac{5}{8}}\\
&\lesssim t^{\frac{1-N}{2\alpha}-\frac{3}{16\alpha}}\|(u_0,B_0)\|_{L^2}^{\frac{15}{32}}\|(u,B)\|_{H^4}^{\frac{49}{32}}
+t^{\frac{2-N}{2\alpha}-\frac{3}{8\alpha}}\|(u_0,B_0)\|_{L^2}^{\frac{5}{16}}\|(u,B)\|_{H^4}^{\frac{27}{16}}\\
&\quad+t^{\frac{3-N}{2\alpha}-\frac{9}{16\alpha}}\|(u_0,B_0)\|_{L^2}^{\frac{5}{32}}\|(u,B)\|_{H^4}^{\frac{59}{32}}
+t^{-\frac{N}{2\alpha}+\frac{5}{4\alpha}}\|(u,B)\|_{H^4}^{2}\\
&\lesssim t^{-\frac{N}{2\alpha}}\|(u_0,B_0)\|_{H^4},
\end{align*}}
\!\!since $c$ is small enough in \eqref{T2}. Applying the above estimates, we deduce that {\small
\begin{align*}
t^{\frac{N}{2\alpha}}\|D^{N}u\|_{H^4}&\lesssim \|u_0\|_{H^4}+t^{\frac{N}{2\alpha}}\int_0^{t/2}(t-s)^{-\frac{N+1}{2\alpha}}\|(u,B)\|_{L^\infty}\|(u,B)\|_{H^4}ds\\
&\quad+t^{\frac{N}{2\alpha}}\int^t_{t/2}(t-s)^{-\frac{1}{\alpha}}\|D^{N-1}\left(B\otimes B-u\otimes u\right)\|_{H^4}ds\\
&\lesssim \|(u_0,B_0)\|_{H^4},
\end{align*}
\begin{align*}
t^{\frac{N}{2\alpha}}\|D^{N}B\|_{H^4}&\lesssim \|B_0\|_{H^4}+t^{\frac{N}{2\alpha}}\int_0^{t/2}(t-s)^{-\frac{N+1}{2\alpha}}\|(u,B)\|_{L^\infty}\|(u,B)\|_{H^4}ds\\
&\quad+t^{\frac{N}{2\alpha}}\int_0^{t/2}(t-s)^{-\frac{N+2}{2\alpha}}\|B\|_{L^\infty}\|B\|_{H^4}ds\\
&\quad+t^{\frac{N}{2\alpha}}\int_{t/2}^t(t-s)^{-\frac{1}{\alpha}}\|D^{N-1}\left(B\otimes u-u\otimes B\right)\|_{H^4}ds\\
&\quad+t^{\frac{N}{2\alpha}}\int_{t/2}^t(t-s)^{-\frac{1}{\alpha}}\sum_{N'=1}^{N}\|D^{N-N'}B\|_{H^4}\|D^{N'}B\|_{L^\infty}ds\\
&\quad+t^{\frac{N}{2\alpha}}\int_{t/2}^t(t-s)^{-\frac{1}{\alpha}}\|D^{N}B\|_{H^4}ds\|(u_0,B_0)\|_{L^2}^{\frac{5}{8}}\|(u,B)\|_{H^4}^{\frac{3}{8}}\\
&\lesssim \|(u_0,B_0)\|_{H^4}+t^{\frac{N}{2\alpha}}\int_{t/2}^t(t-s)^{-\frac{1}{\alpha}}\|D^{N}B\|_{H^4}ds\|(u_0,B_0)\|_{L^2}^{\frac{5}{8}}\|(u,B)\|_{H^4}^{\frac{3}{8}}.
\end{align*}}
\!\!Since $c$ is small enough in \eqref{T2}, bootstrap principle yields this \eqref{decay1} for $M=0$.

For the case of $M=1$ and $N=0$, applying the Gagliardo--Nirenberg--Sobolev inequality and \eqref{decay1} for $M=0$, we have{\small
\begin{align*}
\|\partial_tu\|_{H^4}&\lesssim\|(-\Delta)^\alpha u\|_{H^4}+\|u\cdot\nabla u\|_{H^4}+\|B\cdot\nabla B\|_{H^4}\\
&\lesssim t^{-1}\|(u_0,B_0)\|_{H^4}+\|(u,B)\|_{L^\infty}\|\nabla(u,B)\|_{H^4}+\|\nabla(u,B)\|_{L^\infty}\|(u,B)\|_{H^4}\\
&\lesssim t^{-1}\|(u_0,B_0)\|_{H^4}\left(1+t^{1-\frac{1}{2\alpha}}\|(u_0,B_0)\|_{L^2}^{\frac{5}{8}}\|(u,B)\|_{H^4}^{\frac{3}{8}}
+t\|(u_0,B_0)\|_{L^2}^{\frac{3}{8}}\|(u,B)\|_{H^4}^{\frac{5}{8}}\right)\\
&\lesssim t^{-1}\|(u_0,B_0)\|_{H^4}.
\end{align*}}
\!\!Similarly, one has{\small
\begin{align*}
\|\partial_tB\|_{H^4}&\lesssim\|(-\Delta)^\alpha B\|_{H^4}+\|u\cdot\nabla B-B\cdot\nabla u\|_{H^4}+\|\nabla\times {\rm div}(B\otimes B)\|_{H^4}\\
&\lesssim t^{-1}\|(u_0,B_0)\|_{H^4}+\|(u,B)\|_{L^\infty}\|\nabla(u,B)\|_{H^4}+\|\nabla(u,B)\|_{L^\infty}\|(u,B)\|_{H^4}\\
&\quad+\|B\|_{L^\infty}\|D^2B\|_{H^4}+\|D^2B\|_{L^\infty}\|B\|_{H^4}+\|DB\|_{L^\infty}\|DB\|_{H^4}\\
&\lesssim t^{-1}\|(u_0,B_0)\|_{H^4}+t^{-\frac{1}{\alpha}}\|(u_0,B_0)\|_{L^2}^{\frac{5}{8}}\|(u,B)\|_{H^4}^{\frac{11}{8}}\\
&\quad+\|(u_0,B_0)\|_{L^2}^{\frac{1}{8}}\|(u,B)\|_{H^4}^{\frac{15}{8}}+t^{-\frac{1}{2\alpha}}\|(u_0,B_0)\|_{L^2}^{\frac{3}{8}}\|(u,B)\|_{H^4}^{\frac{13}{8}}\\
&\lesssim t^{-1}\|(u_0,B_0)\|_{H^4}.
\end{align*}}
\!\!The case of $M=1$ and $N\geq1$ in \eqref{decay1} could be established in a similar way and we omit the details.
\end{proof}

\subsection{Stability estimates}
\begin{Proposition}\label{stability prop}
Fix $\alpha\in[\rho,\frac{5}{4})$, $\rho\in (1,\frac{5}{4})$ and $p_0\in(1,\frac{5}{4})$. For $q\in \mathbb{N}$, $(u_q,B_q,\mathring{R}_q^u,\mathring{R}_q^B)$ is a $C^0_tH^4$ smooth solution of \eqref{Hall-MHD2} with estimates \eqref{RestL1}--\eqref{uBestH4} holds. Let $t_0\in [0,T]$ and define initial data
$$(u_0,B_0)=(u_q,B_q)|_{t=t_0}.$$
Suppose $t_1>t_0$ with $[t_0,t_1]\subset[0,T]$ and
\begin{align}\label{T3}
0<t_1-t_0\leq\left(\delta_0^{-\frac{5}{8}}\lambda_q^{-\frac{15}{8}}\right)^{\frac{\rho}{\rho-1}}.
\end{align}
Then from \eqref{uBestL2}, \eqref{uBestH4} and Proposition \ref{local solu}, there exists a unique $C^0_tH^4$ smooth zero mean solution of the Hall-MHD equations \eqref{Hall-MHD1} on $[t_0,t_1]$, with initial data $(u_0,B_0)$. And there exists a constant $C=C(\rho,\alpha,p_0)$ such that for any $p\in[p_0,2]$ and $t\in (t_0,t_1]$,
\begin{align}
\|(u_q-u,B_q-B)(t)\|_{L^{2p}}&\leq C|t-t_0|\left\||\nabla|(\mathring{R}_q^u,{\rm div}\mathring{R}_q^B)\right\|_{L^\infty([t_0,t_1];L^{2p})},\label{chaL2p}\\
\|(u_q-u,B_q-B)(t)\|_{L^{p}}&\leq C|t-t_0|\left\||\nabla|(\mathring{R}_q^u,{\rm div}\mathring{R}_q^B)\right\|_{L^\infty([t_0,t_1];L^{p})},\label{chaLp}\\
\left\|\left(\mathcal{R}(u_q-u),\mathcal{R}{\rm curl}^{-1}(B_q-B)\right)(t)\right\|_{L^{p}}&\leq C|t-t_0|\left\|(\mathring{R}_q^u,\mathring{R}_q^B)\right\|_{L^\infty([t_0,t_1];L^{p})}.\label{chaLp1}
\end{align}
\end{Proposition}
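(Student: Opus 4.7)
\emph{Proof plan for Proposition \ref{stability prop}.} Write $(w,d):=(u_q-u,B_q-B)$. Subtracting the exact Hall-MHD system \eqref{Hall-MHD1} from the approximating system \eqref{Hall-MHD2}, the pair $(w,d)$ satisfies a hyperdissipative evolution with zero initial data at $t_0$, forced by $\mathbb{P}_H{\rm div}\mathring{R}_q^u$ and $\nabla\times{\rm div}\mathring{R}_q^B$, and coupled bilinearly to the known fields $(u,u_q,B,B_q)$. I pass to the Duhamel representation against the semigroup $e^{-(-\Delta)^\alpha\tau}$ and close each of the three estimates by a bootstrap argument on $[t_0,t_1]$ in which the smallness of $t_1-t_0$ supplied by \eqref{T3} absorbs every bilinear contribution.

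\emph{Estimates \eqref{chaL2p} and \eqref{chaLp}.} Fix $r\in\{p,2p\}$. Since $e^{-(-\Delta)^\alpha\tau}$ is bounded on $L^r$ uniformly in $\tau\geq 0$, and the components of ${\rm div}$ are $|\nabla|$ composed with Riesz transforms, Calder\'on--Zygmund theory (valid for $1<r<\infty$) controls the stress contribution to $\|w\|_{L^r}$ by
$$\Big\|\int_{t_0}^t e^{-(-\Delta)^\alpha(t-s)}\,{\rm div}\,\mathring{R}_q^u(s)\,ds\Big\|_{L^r}\lesssim|t-t_0|\,\||\nabla|\mathring{R}_q^u\|_{L^\infty_t L^r},$$
and that to $\|d\|_{L^r}$ by $|t-t_0|\,\||\nabla|\,{\rm div}\mathring{R}_q^B\|_{L^\infty_t L^r}$, after extracting one gradient from $\nabla\times{\rm div}\mathring{R}_q^B$. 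For the bilinear couplings I invoke the smoothing bounds $\|e^{-(-\Delta)^\alpha\tau}\,{\rm div}\|_{L^r\to L^r}\lesssim\tau^{-1/(2\alpha)}$ and $\|e^{-(-\Delta)^\alpha\tau}\,\nabla\times{\rm div}\|_{L^r\to L^r}\lesssim\tau^{-1/\alpha}$ together with the $L^\infty$ bound on $(u,u_q,B,B_q)$ coming from \eqref{uBestH4}, Proposition \ref{local solu} and Sobolev embedding. A bootstrap in $C([t_0,t_1];L^r)$ then closes, since \eqref{T3} is tight enough that the aggregated nonlinear contributions of order $(t_1-t_0)^{1-1/(2\alpha)}$ and $(t_1-t_0)^{1-1/\alpha}$ multiplied by $\|(u,u_q,B,B_q)\|_{L^\infty}$ are small.

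\emph{Estimate \eqref{chaLp1}.} The operators $\mathcal{R}$ and $\mathcal{R}\,{\rm curl}^{-1}$ are Fourier multipliers (of orders $-1$ and $-2$) and commute with $e^{-(-\Delta)^\alpha\tau}$, $\mathbb{P}_H$, ${\rm div}$ and $\nabla\times$. Applying them to the Duhamel formulas for $w$ and $d$ converts the forcing into $\int_{t_0}^t e^{-(-\Delta)^\alpha(t-s)}\mathring{R}_q^u(s)\,ds$ and $\int_{t_0}^t e^{-(-\Delta)^\alpha(t-s)}\mathring{R}_q^B(s)\,ds$, modulo bounded zero-order Riesz multipliers, each of $L^p$ norm at most $|t-t_0|\,\|(\mathring{R}_q^u,\mathring{R}_q^B)\|_{L^\infty_t L^p}$. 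The bilinear terms become zero-order composite multipliers applied to quadratic products and are dominated in $L^p$ by $\int_{t_0}^t\|(w,d)(s)\|_{L^p}\,\|(u,u_q,B,B_q)\|_{L^\infty}\,ds$; inserting the bound \eqref{chaLp} just proved, and again invoking \eqref{T3}, yields \eqref{chaLp1}.

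\emph{Main obstacle.} The genuinely new difficulty compared with the hyperdissipative NSE stability estimate in \cite{bcv} is the Hall coupling $\nabla\times{\rm div}(d\otimes B_q+B\otimes d)$, which acts as a second-order perturbation of the dissipation. Its Duhamel kernel gains only $(t-s)^{-1/\alpha}$, so integrability at $s=t$ hinges on the standing assumption $\alpha>\rho>1$, and closing the bootstrap for the magnetic component requires the product $(t_1-t_0)^{1-1/\alpha}\,\|(B,B_q)\|_{L^\infty}$ to be small. The sharp exponent $\rho/(\rho-1)$ in \eqref{T3}, rather than the looser $\alpha/(\alpha-1)$ appearing in \eqref{T2}, is calibrated so that this quantity is uniformly small for all $\alpha\in[\rho,5/4)$.
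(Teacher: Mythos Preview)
Your treatment of \eqref{chaL2p} and \eqref{chaLp} matches the paper's: Duhamel for $(w,d)=(u_q-u,B_q-B)$, semigroup smoothing at rates $(t-s)^{-1/(2\alpha)}$ and $(t-s)^{-1/\alpha}$, and a bootstrap closed by \eqref{T3}.

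Your argument for \eqref{chaLp1} has a genuine gap. You apply $\mathcal{R}$ and $\mathcal{R}\,{\rm curl}^{-1}$ to the Duhamel formulas and correctly observe that the bilinear pieces become zero-order multipliers acting on products like $w\otimes u_q$, hence are controlled by $\int_{t_0}^t\|(w,d)\|_{L^p}\|(u,u_q,B,B_q)\|_{L^\infty}\,ds$. The problem is the next step: you feed in \eqref{chaLp}, whose right-hand side is $|s-t_0|\,\||\nabla|(\mathring{R}_q^u,{\rm div}\mathring{R}_q^B)\|_{L^\infty_tL^p}$, \emph{not} $\|(\mathring{R}_q^u,\mathring{R}_q^B)\|_{L^\infty_tL^p}$. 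The resulting bound on the bilinear contribution is therefore $(t-t_0)^2\,\||\nabla|(\mathring{R}_q^u,{\rm div}\mathring{R}_q^B)\|_{L^p}\,\|(u,u_q,B,B_q)\|_{L^\infty}$, and nothing in \eqref{T3} lets you trade this for $|t-t_0|\,\|(\mathring{R}_q^u,\mathring{R}_q^B)\|_{L^p}$ with a constant $C=C(\rho,\alpha,p_0)$ independent of $q$: the ratio $\||\nabla|(\mathring{R}_q^u,{\rm div}\mathring{R}_q^B)\|_{L^p}/\|(\mathring{R}_q^u,\mathring{R}_q^B)\|_{L^p}$ is not controlled by the hypotheses.

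The paper avoids this by running an \emph{independent} bootstrap on potentials, not by reusing \eqref{chaLp}. It sets $z=\Delta^{-1}\nabla\times(u_q-u)$ and $\varphi=\Delta^{-1}(B_q-B)$, so that $\|\mathcal{R}(u_q-u)\|_{L^p}\lesssim\|z\|_{L^p}$ and $\|\mathcal{R}\,{\rm curl}^{-1}(B_q-B)\|_{L^p}\lesssim\|\varphi\|_{L^p}$, and derives evolution equations for $(z,\varphi)$. The essential step you are missing is a set of algebraic identities that shift derivatives off the potentials and onto the background fields, e.g.\ $(\nabla\times z)\cdot\nabla u_q={\rm div}\big(((z\times\nabla)u_q)^T\big)$, $B\cdot\nabla(\nabla\times z)=\nabla\times{\rm div}(z\otimes B)+{\rm div}((z\times\nabla)\otimes B)$, and $\Delta\varphi\otimes B_q+B\otimes\Delta\varphi=\Delta(\varphi\otimes B_q+B\otimes\varphi)+\varphi\otimes\Delta B_q+\Delta B\otimes\varphi-2\partial_k(\varphi\otimes\partial_kB_q+\partial_kB\otimes\varphi)$. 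After these rewritings every bilinear term in the $(z,\varphi)$ system is estimated by $\|(z,\varphi)\|_{L^p}$ times $\|(u,u_q,B,B_q)\|_{W^{2,\infty}}$ (possibly with an integrable time singularity from the semigroup), and the bootstrap closes directly against the forcing $\|(\mathring{R}_q^u,\mathring{R}_q^B)\|_{L^p}$.
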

Let $\frac{1}{p_0}=1-\frac{\varepsilon_R}{35}$ in the above proposition, we have the following corollary:
\begin{Corollary}\label{corollary}
Under the hypothesis of Proposition \ref{stability prop}, assuming $a\geq1$ large enough depending on $\varepsilon_R$, then we have:
\begin{align}
\|(u_q-u,B_q-B)\|_{L^\infty([t_0,t_1];L^{2p})}&\leq|t_1-t_0|\lambda_q^{\frac{7}{11}(10-\frac{3}{p})},\label{chaL2p1}\\
\|(u_q-u,B_q-B)\|_{L^\infty([t_0,t_1];L^{2})}&\leq|t_1-t_0|\lambda_q^5,\label{chaL2}\\
\left\|\left(\mathcal{R}(u_q-u),\mathcal{R}{\rm curl}^{-1}(B_q-B)\right)\right\|_{L^\infty([t_0,t_1];L^{1})}&\leq|t_1-t_0|\lambda_q^{-\frac{3}{4}\varepsilon_R}\delta_{q+1}.\label{chaL1}
\end{align}
\end{Corollary}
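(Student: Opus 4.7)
The plan is to apply Proposition \ref{stability prop} with $p_0$ defined by $1/p_0 = 1-\varepsilon_R/35$ (so that $p_0 \in (1,5/4)$ for small $\varepsilon_R$), and then bound the Reynolds-stress quantities on the right-hand sides of \eqref{chaL2p}--\eqref{chaLp1} via Gagliardo--Nirenberg interpolation between the $L^1$ bound \eqref{RestL1} and the $H^4$ bound \eqref{RestH4}. The $H^4$ factor $\lambda_q^7$ will drive the leading power of $\lambda_q$, while the small factors $\lambda_q^{-\varepsilon_R}\delta_{q+1}$ contribute additional negative or at worst $\varepsilon_R$-small powers that get absorbed for $a$ large enough.

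For \eqref{chaL2p1} with $p \in [p_0,2]$, I would plug \eqref{chaL2p} into the task of bounding $\||\nabla|\mathring{R}_q^u\|_{L^{2p}}$ and $\||\nabla|{\rm div}\mathring{R}_q^B\|_{L^{2p}}$. The magnetic term is dominant since it carries two spatial derivatives. A standard Gagliardo--Nirenberg computation (with $n=3$, $j=2$, $m=4$, $q=2$, $s=1$, $r=2p$) yields the interpolation exponent $a = 10/11 - 3/(11p)$, so that
\begin{align*}
\||\nabla|^2 \mathring{R}_q^B\|_{L^{2p}} \lesssim \|\mathring{R}_q^B\|_{H^4}^{10/11 - 3/(11p)} \|\mathring{R}_q^B\|_{L^1}^{1/11+3/(11p)}.
\end{align*}
Inserting \eqref{RestL1}--\eqref{RestH4} produces the leading factor $\lambda_q^{7(10-3/p)/11}$ multiplied by $\lambda_q^{-\varepsilon_R(1/11+3/(11p))}\delta_{q+1}^{1/11+3/(11p)}$, which is small. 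The corresponding estimate for $|\nabla|\mathring{R}_q^u$ uses $j=1$ and gives the smaller exponent $7(8-3/p)/11$, so it is controlled by the magnetic one. For \eqref{chaL2}, since $2p_0 > 2$ on the torus, the embedding $L^{2p_0}\hookrightarrow L^2$ and the previous bound at $p=p_0$ give the exponent $7(10-3/p_0)/11 = 49/11 + O(\varepsilon_R)$, which is strictly less than $5$, so absorbing constants by taking $a$ large upgrades the bound to $|t_1-t_0|\lambda_q^5$.

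For \eqref{chaL1}, I would apply \eqref{chaLp1} with $p=p_0$ and then use $L^{p_0}\hookrightarrow L^1$ on the torus to reduce to estimating $\|(\mathring{R}_q^u,\mathring{R}_q^B)\|_{L^{p_0}}$. Interpolating $\|f\|_{L^{p_0}} \leq \|f\|_{L^1}^{1/p_0}\|f\|_{L^\infty}^{1-1/p_0}$ together with the Sobolev embedding $\|f\|_{L^\infty} \lesssim \|f\|_{H^4}$ and the inductive bounds gives a leading $\lambda_q$-exponent of $-\varepsilon_R/p_0 + 7(1-1/p_0) = -4\varepsilon_R/5 + O(\varepsilon_R^2)$. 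The $\delta_{q+1}^{1/p_0}$ factor becomes $\delta_{q+1}\cdot\delta_{q+1}^{-\varepsilon_R/35}$, contributing an additional $\lambda_q^{2b\beta\varepsilon_R/35}$ via the definition $\delta_{q+1} = \lambda_1^{3\beta}\lambda_{q+1}^{-2\beta}$. Since $\beta b \ll 1$ and $\varepsilon_R$ is small, the net exponent is less than $-3\varepsilon_R/4$, which yields \eqref{chaL1}.

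The main obstacle is purely algebraic: carefully tracking the interpolation exponents so that the leading $\lambda_q$-powers exactly match the claimed $7(10-3/p)/11$, $5$, and $-3\varepsilon_R/4$, and then verifying that every residual factor (positive powers of $\lambda_q^{-\varepsilon_R}$, $\delta_{q+1}$, and the $\lambda_{q+1}^{2\beta\varepsilon_R/35}$ loss from $\delta_{q+1}^{1/p_0}$) can be absorbed using $a \gg 1$, $\beta b \ll 1$, and $\varepsilon_R$ small. The calibration $1/p_0 = 1-\varepsilon_R/35$ is chosen precisely to give sufficient slack in both the $L^2$ bound (where $49/11 < 5$) and the $L^1$ bound (where $-4/5 < -3/4$) after these perturbations.
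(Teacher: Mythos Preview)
Your proposal is correct and follows essentially the same approach as the paper: apply Proposition~\ref{stability prop} with $1/p_0=1-\varepsilon_R/35$, then bound the Reynolds-stress norms on the right by Gagliardo--Nirenberg interpolation between \eqref{RestL1} and \eqref{RestH4}, observing that the magnetic term (which carries two derivatives) gives the dominant exponent $\tfrac{7}{11}(10-\tfrac{3}{p})$. The only minor variations are that for \eqref{chaL2} the paper repeats the interpolation argument directly at the $L^2$ level (via \eqref{chaLp} with $p=2$) rather than embedding $L^{2p_0}\hookrightarrow L^2$, and for \eqref{chaL1} the paper uses the sharper Gagliardo--Nirenberg interpolation $\|f\|_{L^{p_0}}\lesssim\|f\|_{L^1}^{\frac{1}{11}(5+6/p_0)}\|f\|_{H^4}^{\frac{6}{11}(1-1/p_0)}$ instead of your $L^1$--$L^\infty$ interpolation; both routes lead to the same leading exponents ($49/11<5$ and $-4\varepsilon_R/5+O(\varepsilon_R^2)<-3\varepsilon_R/4$) and are equally valid.
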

\begin{proof}[Proof of Corollary \ref{corollary}]
By the Gagliardo--Nirenberg--Sobolev inequality, \eqref{RestL1} and \eqref{RestH4}, we find that
\begin{align*}
\left\||\nabla|(\mathring{R}_q^u,{\rm div}\mathring{R}_q^B)\right\|_{L^{2p}}
&\lesssim\left\|\mathring{R}_q^u\right\|_{L^1}^{\frac{3}{11}(1+\frac{1}{p})}\left\|\mathring{R}_q^u\right\|_{H^4}^{\frac{1}{11}(8-\frac{3}{p})}
+\left\|\mathring{R}_q^B\right\|_{L^1}^{\frac{1}{11}(1+\frac{3}{p})}\left\|\mathring{R}_q^B\right\|_{H^4}^{\frac{1}{11}(10-\frac{3}{p})}\\
&\lesssim\left(\lambda_q^{-\varepsilon_R}\delta_{q+1}\right)^{\frac{1}{11}(1+\frac{3}{p})}\lambda_q^{\frac{7}{11}(10-\frac{3}{p})}\\
&\leq\lambda_q^{\frac{7}{11}(10-\frac{3}{p})}.
\end{align*}
Then \eqref{chaL2p} implies \eqref{chaL2p1}. The estimate \eqref{chaL2} could be established in a same way.

It follows from the Gagliardo--Nirenberg--Sobolev inequality, \eqref{RestL1} and \eqref{RestH4} that
\begin{align*}
\left\|\mathring{R}_q^u\right\|_{L^{p_0}}&\lesssim\left\|\mathring{R}_q^u\right\|_{L^1}^{\frac{1}{11}(5+\frac{6}{p_0})}
\left\|\mathring{R}_q^u\right\|_{L^1}^{\frac{1}{11}(1-\frac{1}{p_0})}
\lesssim\left(\lambda_q^{-\varepsilon_R}\delta_{q+1}\right)^{\frac{1}{11}(11-\frac{6}{35}\varepsilon_R)}\lambda_q^{\frac{6}{55}\varepsilon_R}\\
&\lesssim\lambda_q^{-\varepsilon_R}\delta_{q+1}\left(\lambda_q^{\frac{\varepsilon_R}{35}+\frac{1}{5}}\delta_{q+1}^{-\frac{1}{35}}\right)^{\frac{6}{11}\varepsilon_R}
\lesssim\lambda_q^{-\varepsilon_R}\delta_{q+1}\lambda_q^{\frac{6}{11}\varepsilon_R\left(\frac{\varepsilon_R}{35}+\frac{1}{5}+\frac{2}{35}\beta b\right)}\\
&\leq\lambda_q^{-\frac{4}{5}\varepsilon_R}\delta_{q+1},
\end{align*}
since $\beta b\ll1$ and $\varepsilon_R\ll1$. Similarly, we have $\left\|\mathring{R}_q^B\right\|_{L^{p_0}}\leq\lambda_q^{-\frac{4}{5}\varepsilon_R}\delta_{q+1}$. By using \eqref{chaLp1}, we conclude that
\begin{align*}
\left\|\left(\mathcal{R}(u_q-u),\mathcal{R}{\rm curl}^{-1}(B_q-B)\right)\right\|_{L^{1}}&\leq\left\|\left(\mathcal{R}(u_q-u),\mathcal{R}{\rm curl}^{-1}(B_q-B)\right)\right\|_{L^{p_0}}\\
&\leq|t-t_0|\lambda_q^{-\frac{3}{4}\varepsilon_R}\delta_{q+1}.
\end{align*}
\end{proof}

\begin{proof}[Proof of Proposition \ref{stability prop}]
Without loss of generality, we prove this proposition with $t_0=0$. Define
$$v=u_q-u,\quad b=B_q-B,$$
then one has
\begin{equation}\label{Hall-MHD3}
\begin{cases}
\partial_tv+(-\Delta)^{\alpha}v+{\rm div}\left(v\otimes u_q+u\otimes v-b\otimes B_q-B\otimes b\right)+\nabla(p_q-p)={\rm div}\mathring{R}_q^u,\\
\partial_tb+(-\Delta)^{\alpha}b+{\rm div}\left(v\otimes B_q+u\otimes b-b\otimes u_q-B\otimes v\right)\\
\quad\quad+\nabla\times{\rm div}(b\otimes B_q+B\otimes b)=\nabla\times{\rm div}\mathring{R}_q^B,\\
{\rm div}v={\rm div}b=0,\\
(v,b)|_{t=0}=(0,0).
\end{cases}
\end{equation}
From Proposition \ref{local solu}, we know that for $t\in(0,t_1]$, the unique solution $(v,b)$ can be written as
\begin{align}\label{vb}
&v(x,t)=\int_0^te^{-(-\Delta)^\alpha(t-s)}\mathbb{P}_H{\rm div}\left(\mathring{R}_q^u-v\otimes u_q-u\otimes v+b\otimes B_q+B\otimes b\right)(s)ds,\\
&b(x,t)=\int_0^te^{-(-\Delta)^\alpha(t-s)}\nabla\times{\rm div}\left(\mathring{R}_q^B-b\otimes B_q-B\otimes b\right)(s)\notag\\
&\quad\quad\quad\quad+ e^{-(-\Delta)^\alpha(t-s)}{\rm div}\left(b\otimes u_q+B\otimes v-v\otimes B_q-u\otimes b\right)(s)ds.
\end{align}
From Lemma 3.1 in \cite{myz}, we know that for $1\leq r\leq\infty$,
\begin{align}\label{fractional est}
\left\|(-\Delta)^{\nu/2} e^{-(-\Delta)^\alpha t}\phi\right\|_{L^r}\lesssim t^{-\frac{\nu}{2\alpha}}\left\|\phi\right\|_{L^r}.
\end{align}
Then we return to \eqref{vb}, to obtain
\begin{align*}
&\|v(t)\|_{L^{2p}}\lesssim t\left\|{\rm div}\mathring{R}_q^u\right\|_{L^{2p}}+\int_0^t(t-s)^{-\frac{1}{2\alpha}}\left(\|(u_q,u)\|_{L^\infty}\|v\|_{L^{2p}}
+\|(B_q,B)\|_{L^\infty}\|b\|_{L^{2p}}\right)(s)ds,\\
&\|b(t)\|_{L^{2p}}\lesssim t\left\|\nabla\times{\rm div}\mathring{R}_q^B\right\|_{L^{2p}}+\int_0^t(t-s)^{-\frac{1}{\alpha}}(\|(B_q,B)\|_{L^\infty}\|b\|_{L^{2p}})(s)\\
&\quad\quad\quad\quad+(t-s)^{-\frac{1}{2\alpha}}\left(\|(u_q,u)\|_{L^\infty}\|b\|_{L^{2p}}
+\|(B_q,B)\|_{L^\infty}\|v\|_{L^{2p}}\right)(s)ds.
\end{align*}
By the Gagliardo--Nirenberg--Sobolev inequality, Proposition \ref{local solu}, \eqref{uBestL2} and \eqref{uBestH4}, we can infer that
\begin{align}\label{es2}
\|(v,b)(t)\|_{L^{2p}}&\lesssim t\left\||\nabla|(\mathring{R}_q^B,{\rm div}\mathring{R}_q^u)\right\|_{L^{2p}}\notag\\
&+\int_0^t\left((t-s)^{-\frac{1}{\alpha}}+(t-s)^{-\frac{1}{2\alpha}}\right)\|(v,b)(s)\|_{L^{2p}}\|(u_q,B_q)\|_{L^2}^{\frac{5}{8}}\|(u_q,B_q)\|_{H^4}^{\frac{3}{8}}ds\notag\\
&\lesssim t\left\||\nabla|(\mathring{R}_q^B,{\rm div}\mathring{R}_q^u)\right\|_{L^{2p}}\notag\\
&+\delta_0^{\frac{5}{16}}\lambda_q^{\frac{15}{8}}\int_0^t\left((t-s)^{-\frac{1}{\alpha}}+(t-s)^{-\frac{1}{2\alpha}}\right)\|(v,b)(s)\|_{L^{2p}}ds.
\end{align}
Finally, we apply bootstrap principle to obtain \eqref{chaL2p}. Suppose that for $t\in(0,t_1]$,
$$\|(v,b)(t)\|_{L^{2p}}\leq2Ct\left\||\nabla|(\mathring{R}_q^B,{\rm div}\mathring{R}_q^u)\right\|_{L^{2p}},$$
then from \eqref{es2}, we have
\begin{align*}
\|(v,b)(t)\|_{L^{2p}}\leq Ct\left\||\nabla|(\mathring{R}_q^B,{\rm div}\mathring{R}_q^u)\right\|_{L^{2p}}
\left(1+2C\delta_0^{\frac{5}{16}}\lambda_q^{\frac{15}{8}}\left(t^{1-\frac{1}{\alpha}}+t^{1-\frac{1}{2\alpha}}\right)\right).
\end{align*}
In view of \eqref{T3}, \eqref{chaL2p} holds for $a$ large enough. The estimate \eqref{chaLp} can be obtained  in a same way, we omit the details.

In order to prove $\eqref{chaLp1}$, define
$$z=\Delta^{-1}\nabla\times v,\quad\varphi=\Delta^{-1} b.$$
Since ${\rm div}v={\rm div}b=0$, one has
$$v=-\nabla\times z, \quad b=\Delta \varphi=-\nabla\times\nabla\times \varphi, \quad {\rm curl}^{-1}b=-\Delta^{-1}\nabla\times b,$$
thus
$$\mathcal{R}v=-\mathcal{R}\nabla\times z,\quad \mathcal{R}{\rm curl}^{-1}b=-\mathcal{R}\Delta^{-1}\nabla\times b=-\mathcal{R}\nabla\times \varphi.$$
The Calderon-Zygmund inequality yields
$$\|\mathcal{R}v\|_{L^p}\lesssim\|z\|_{L^p},\quad\| \mathcal{R}{\rm curl}^{-1}b\|_{L^p}\lesssim\|\varphi\|_{L^p}.$$
Recall equation \eqref{Hall-MHD3}, then $(z,\varphi)$ satisfies
\begin{equation}\label{Hall-MHD4}
\begin{cases}
\partial_tz+(-\Delta)^\alpha z+\Delta^{-1}\nabla\times{\rm div}\left(-(\nabla\times z)\otimes u_q-u\otimes(\nabla\times z)-\Delta\varphi\otimes B_q-B\otimes\Delta\varphi\right)\\
\quad\quad+\nabla(p_q-p)=\Delta^{-1}\nabla\times{\rm div}\mathring{R}_q^u,\\
\partial_t\varphi+(-\Delta)^\alpha \varphi+\Delta^{-1}{\rm div}\left(-(\nabla\times z)\otimes B_q+B\otimes(\nabla\times z)-\Delta\varphi\otimes u_q+u\otimes\Delta\varphi\right)\\
\quad\quad+\Delta^{-1}\nabla\times{\rm div}\left(\Delta\varphi\otimes B_q+B\otimes\Delta\varphi\right)=\Delta^{-1}\nabla\times{\rm div}\mathring{R}_q^B.
\end{cases}
\end{equation}
We have the following identities:
\begin{align}
&(\nabla\times z)\cdot\nabla u_q={\rm div}(((z\times\nabla)u_q)^T),\;
(\nabla\times z)\cdot\nabla B_q={\rm div}(((z\times\nabla)B_q)^T),\label{eq2}\\
&\Delta^{-1}\nabla\times{\rm div}\left(u\otimes(\nabla\times z)\right)\notag\\
&\quad=-u\cdot\nabla z+\Delta^{-1}\nabla\times{\rm div}((z\times\nabla)u)+\Delta^{-1}\nabla{\rm div}((u\cdot\nabla)z),\label{eq3}\\
&B\cdot\nabla(\nabla\times z)=\nabla\times{\rm div}(z\otimes B)+{\rm div}((z\times\nabla)\otimes B),\label{eq4}\\
&\Delta\varphi\otimes B_q+B\otimes\Delta\varphi=\Delta\left(\varphi\otimes B_q+B\otimes\varphi\right)+\varphi\otimes \Delta B_q+\Delta B\otimes\varphi\notag\\
&\quad\quad\quad\quad\quad\quad\quad\quad\quad-2\partial_k\left(\varphi\otimes\partial_k B_q+\partial_k B\otimes\varphi\right),\label{eq5}\\
&\Delta\varphi\otimes u_q-u\otimes\Delta\varphi=\Delta\left(\varphi\otimes u_q-u\otimes\varphi\right)+\varphi\otimes \Delta u_q-\Delta u\otimes\varphi\notag\\
&\quad\quad\quad\quad\quad\quad\quad\quad\quad-2\partial_k\left(\varphi\otimes\partial_k u_q-\partial_k u\otimes\varphi\right),\label{eq6}
\end{align}
where $({\rm div}S)^i=\partial_jS^{ij}$, for $S=(S^{ij})_{i,j=1}^3$. Here we only show \eqref{eq4}--\eqref{eq6} and we can refer to \cite[Proposition 3.2]{bcv} for the details of \eqref{eq2} and \eqref{eq3}. The $j^{th}$ component of $B\cdot\nabla(\nabla\times z)$ can be written as:
\begin{align*}
\left(B\cdot\nabla(\nabla\times z)\right)^j&=B_k\partial_k(\varepsilon_{jln}\partial_lz_n)=\varepsilon_{jln}B_k\partial_k\partial_lz_n
=\varepsilon_{jln}\partial_k(B_k\partial_lz_n)\\
&=\varepsilon_{jln}\partial_l\partial_k(z_n B_k)+\partial_k\left(\varepsilon_{jnl}z_n\partial_lB_k\right)\\
&=\varepsilon_{jln}\partial_l({\rm div}(z\otimes B))^n+\partial_k\left((z\times\nabla)^jB_k\right)\\
&=\left(\nabla\times{\rm div}(z\otimes B)+{\rm div}((z\times\nabla)\otimes B)\right)^j.
\end{align*}
The $ij^{th}$ component of $B\otimes\Delta\varphi$ can be written as:
\begin{align*}
(B\otimes\Delta\varphi)^{ij}&=B_i\Delta\varphi_j=\Delta(B_i\varphi_j)-\varphi_j\Delta B_i-2\nabla B_i\cdot\nabla \varphi_j\\
&=\Delta(B_i\varphi_j)+\varphi_j\Delta B_i-2{\rm div}(\nabla B_i\varphi_j),
\end{align*}
and $\Delta\varphi\otimes B_q$ can be dealt with in a same way. Thus, we obtain \eqref{eq5} and \eqref{eq6}.

Plugging \eqref{eq2}--\eqref{eq6} into \eqref{Hall-MHD4}, we have
\begin{align*}
z=&\int_0^te^{-(-\Delta)^\alpha(t-s)}\mathbb{P}_H\left(\Delta^{-1}\nabla\times{\rm div}\mathring{R}_q^u+\Delta^{-1}\nabla\times{\rm div}(((z\times\nabla)u_q)^T)\right)\\
&+e^{-(-\Delta)^\alpha(t-s)}\mathbb{P}_H\left(-u\cdot\nabla z+\Delta^{-1}\nabla\times{\rm div}((z\times\nabla)u)+\Delta^{-1}\nabla{\rm div}((u\cdot\nabla)z)\right)\\
&+e^{-(-\Delta)^\alpha(t-s)}\mathbb{P}_H\left(\nabla\times{\rm div}\left(\varphi\otimes B_q+B\otimes\varphi\right)+\Delta^{-1}\nabla\times{\rm div}\left(\varphi\otimes \Delta B_q+\Delta B\otimes\varphi\right)\right)\\
&-2e^{-(-\Delta)^\alpha(t-s)}\mathbb{P}_H\left(\Delta^{-1}\nabla\times{\rm div}\partial_k\left(\varphi\otimes\partial_k B_q+\partial_k B\otimes\varphi\right)\right)ds,\\
\varphi=&\int_0^te^{-(-\Delta)^\alpha(t-s)}\left(\Delta^{-1}\nabla\times{\rm div}\mathring{R}_q^B+\Delta^{-1}{\rm div}(((z\times\nabla)B_q)^T)\right)\\
&+e^{-(-\Delta)^\alpha(t-s)}\left(-\Delta^{-1}\nabla\times{\rm div}(z\otimes B)-\Delta^{-1}{\rm div}((z\times\nabla)\otimes B)\right)\\
&+e^{-(-\Delta)^\alpha(t-s)}\left({\rm div}\left(\varphi\otimes u_q-u\otimes\varphi\right)+\Delta^{-1}{\rm div}\left(\varphi\otimes \Delta u_q-\Delta u\otimes\varphi\right)\right)\\
&-2e^{-(-\Delta)^\alpha(t-s)}\Delta^{-1}{\rm div}\partial_k\left(\varphi\otimes\partial_k u_q-\partial_k u\otimes\varphi\right)\\
&-e^{-(-\Delta)^\alpha(t-s)}\left(\nabla\times{\rm div}\left(\varphi\otimes B_q+B\otimes\varphi\right)+\Delta^{-1}\nabla\times{\rm div}\left(\varphi\otimes \Delta B_q+\Delta B\otimes\varphi\right)\right)\\
&+2e^{-(-\Delta)^\alpha(t-s)}\left(\Delta^{-1}\nabla\times{\rm div}\partial_k\left(\varphi\otimes\partial_k B_q+\partial_k B\otimes\varphi\right)\right)ds.
\end{align*}
Utilizing  \eqref{fractional est} and the Calderon-Zygmund inequality, we obtain
\begin{align*}
\|z\|_{L^p}\lesssim& t\left\|\mathring{R}_q^u\right\|_{L^p}+\int_0^t\|\nabla(u,u_q)\|_{L^\infty}\|z\|_{L^p}+(t-s)^{-\frac{1}{2\alpha}}\|u\|_{L^\infty}\|z\|_{L^p}\\
&+(t-s)^{-\frac{1}{\alpha}}\|(B,B_q)\|_{L^\infty}\|\varphi\|_{L^p}+(t-s)^{-\frac{1}{2\alpha}}\|\nabla(B,B_q)\|_{L^\infty}\|\varphi\|_{L^p}\\
&+\|\Delta(B,B_q)\|_{L^\infty}\|\varphi\|_{L^p}ds,\\
\|\varphi\|_{L^p}\lesssim& t\left\|\mathring{R}_q^B\right\|_{L^p}+\int_0^t\|B\|_{L^\infty}\|z\|_{L^p}+(t-s)^{-\frac{1}{2\alpha}}\|(u,u_q)\|_{L^\infty}\|\varphi\|_{L^p}\\
&+\|\nabla(u,u_q)\|_{L^\infty}\|\varphi\|_{L^p}+(t-s)^{-\frac{1}{\alpha}}\|(B,B_q)\|_{L^\infty}\|\varphi\|_{L^p}\\
&+(t-s)^{-\frac{1}{2\alpha}}\|\nabla(B,B_q)\|_{L^\infty}\|\varphi\|_{L^p}+\|\Delta(B,B_q)\|_{L^\infty}\|\varphi\|_{L^p}\\
&+\left\||\nabla|^{-1}\mathbb{P}_{\neq0}\left(((z\times\nabla)B_q)^T-(z\times\nabla)\otimes B+\varphi\otimes \Delta u_q-\Delta u\otimes\varphi\right)\right\|_{L^p}ds.
\end{align*}
Therefore, we conclude that
\begin{align}\label{est2}
\|(z,\varphi)\|_{L^p}\lesssim& t\left\|(\mathring{R}_q^u,\mathring{R}_q^B)\right\|_{L^p}+\int_0^t(t-s)^{-\frac{1}{\alpha}}\|(B,B_q)\|_{L^\infty}\|\varphi\|_{L^p}\notag\\
&+(t-s)^{-\frac{1}{2\alpha}}\left(\|\nabla(B,B_q)\|_{L^\infty}+\|(u,u_q)\|_{L^\infty}\right)\|(z,\varphi)\|_{L^p}\notag\\
&+\|(u,B,u_q,B_q)\|_{W^{2,\infty}}\|(z,\varphi)\|_{L^p}ds.
\end{align}
Now we suppose for $t\in (0,t_1]$,
$$\|(z,\varphi)\|_{L^p}\leq2Ct\left\|(\mathring{R}_q^u,\mathring{R}_q^B)\right\|_{L^p},$$
and then combining \eqref{est2}, the Gagliardo--Nirenberg--Sobolev inequality, Proposition \ref{local solu}, \eqref{uBestL2}, \eqref{uBestH4} and \eqref{T3}, we can infer that
\begin{align*}
\|(z,\varphi)\|_{L^p}&\left(Ct\left\|(\mathring{R}_q^u,\mathring{R}_q^B)\right\|_{L^p}\right)^{-1}\leq 1+2Ct\|(u_q,B_q)\|_{L^2}^{\frac{1}{8}}\|(u_q,B_q)\|_{H^4}^{\frac{7}{8}}\\
&+2C\left(t^{1-\frac{1}{2\alpha}}+t\right)\|(u_q,B_q)\|_{L^2}^{\frac{3}{8}}\|(u_q,B_q)\|_{H^4}^{\frac{5}{8}}\\
&+2C\left(t^{1-\frac{1}{\alpha}}+t^{1-\frac{1}{2\alpha}}+t\right)\|(u_q,B_q)\|_{L^2}^{\frac{5}{8}}\|(u_q,B_q)\|_{H^4}^{\frac{3}{8}}\\
&\leq1+2Ct\delta_0^{\frac{1}{16}}\lambda_q^{\frac{35}{8}}+2C\left(t^{1-\frac{1}{2\alpha}}+t\right)\delta_0^{\frac{3}{16}}\lambda_q^{\frac{25}{8}}
+2C\left(t^{1-\frac{1}{\alpha}}+t^{1-\frac{1}{2\alpha}}+t\right)\delta_0^{\frac{5}{16}}\lambda_q^{\frac{15}{8}}\\
&<2.
\end{align*}
This together with bootstrap principle yields that \eqref{chaLp1} holds.
\end{proof}

\subsection{Proof of Proposition \ref{glue prop}}
This section closely mirrors \cite[Section 3.3]{bcv}. Let $0\leq\eta_i(t)\leq1$
$$\sum_{i=0}^{n_{q+1}}\eta_i(t)=1\;on\;[\frac{T}{3},\frac{2T}{3}],$$
be a $C^\infty$ smooth partition of unity with the following properties:
\begin{align}\label{cutoffest}
{\rm supp}\eta_i\subset[t_i,t_{i+1}+\tau_{q+1}];\quad \eta_i=1\;on\;[t_i+\tau_{q+1},t_{i+1}];\quad \|\eta_i\|_{C^M_t}\lesssim \tau_{q+1}^{-M},
\end{align}
where $t_i=i\vartheta_{q+1}$.

Let $(u_i,B_i)$ be the solution of the Hall-MHD equations with initial data given by $(u_q,B_q)$:
\begin{equation}\label{Hall-MHD5}
\begin{cases}
\partial _tu_i+(-\Delta)^\alpha u_i+{\rm div}(u_i\otimes u_i-B_i\otimes B_i)+\nabla p_i= 0, \\
\partial _tB_i+(-\Delta)^\alpha B_i+{\rm div}(u_i\otimes B_i-B_i\otimes u_i)+\nabla\times{\rm div}(B_i\otimes B_i)=0,\\
{\rm div}u_i = {\rm div}B_i=0,\\
(u_i,B_i)|_{t=t_{i-1}}=(u_q,B_q)|_{t=t_{i-1}}.
\end{cases}
\end{equation}
In view of Proposition \ref{local solu}, for $0<t-t_{i-1}\leq\left(\delta_0^{-\frac{5}{8}}\lambda_q^{-\frac{15}{8}}\right)^{-\frac{\rho}{\rho-1}}$, the unique solution $(u_i,B_i)$ satisfies the following estimates:
\begin{align}
\|(u_i,B_i)(t)\|_{L^2}\leq\|(u_q,B_q)(t_{i-1})\|_{L^2}&\leq 2\delta_0^{\frac{1}{2}}-\delta_q^{\frac{1}{2}},\label{est3}\\
\|(u_i,B_i)(t)\|_{H^4}\leq2\|(u_q,B_q)(t_{i-1})\|_{H^4}&\leq 2\lambda_q^5,\label{est4}\\
|t-t_{i-1}|^{\frac{N}{2\alpha}+M}\left\|\partial_t^MD^N(u_i,B_i)(t)\right\|_{H^4}&\lesssim\lambda_q^5,\label{est5}
\end{align}
for $N\geq0$ and $M\in\{0,1\}$.
Since
$$\vartheta_{q+1}\leq\lambda_q^{-\frac{4\rho}{\rho-1}}\ll\left(\delta_0^{-\frac{5}{8}}\lambda_q^{-\frac{15}{8}}\right)^{-\frac{\rho}{\rho-1}},$$
from \eqref{est5}, we have
\begin{align}\label{decay2}
\sup_{t\in {\rm supp}\eta_i}\left\|\partial_t^MD^N(u_i,B_i)(t)\right\|_{H^4}\lesssim\vartheta_{q+1}^{-\frac{N}{2\alpha}-M}\lambda_q^5,
\end{align}
for $1\leq i\leq n_{q+1}$, $N\geq0$ and $M\in\{0,1\}$.

For $t\in[0,\frac{T}{3}]\cup[\frac{2T}{3},T]=\mathscr{G}^{(0)}$, define
\begin{align*}
&(\bar{u}_q,\bar{B}_q)(x,t)=(u_q,B_q)(x,t)=(u_0,B_0)(x,t),\\
&\bar{p}_q^{(1)}(x,t)=p_q(x,t)=p_0(x,t),
\end{align*}
and for $t\in [\frac{T}{3},\frac{2T}{3}]$, define
\begin{align}\label{bardef}
&\bar{u}_q(x,t)=\sum_{i=0}^{n_q+1}\eta_i(t)u_i(x,t),\quad\bar{B}_q(x,t)=\sum_{i=0}^{n_q+1}\eta_i(t)B_i(x,t),\quad\\
&\bar{p}_q^{(1)}(x,t)=\sum_{i=0}^{n_q+1}\eta_i(t)p_i(x,t).\notag
\end{align}
Immediately, for $t\in {\rm supp\eta_i}$, we have
\begin{align}\label{eq7}
\bar{u}_q=(1-\eta_i)u_{i-1}+\eta_iu_i,\quad\bar{B}_q=(1-\eta_i)B_{i-1}+\eta_iB_i,\quad\bar{p}_q^{(1)}=(1-\eta_i)p_{i-1}+\eta_ip_i.
\end{align}

Now we prove \eqref{eq1}. For any $t\in \mathscr{G}^{(q)}$, then $t\in [t_i,t_{i+1}]$ for some $i\in \mathbb{N}$. Thus, for any $s\in[t_{i-2},t_{i+1}+\tau_{q+1}]$, ${\rm dist}(s,\mathscr{G}^{(q)})\leq3\vartheta_{q+1}\ll\tau_q$,
which means that $(u_q,B_q)$ is an exact solution of the Hall-MHD equations on $[t_{i-2},t_{i+1}+\tau_{q+1}]$. By the uniqueness, one has
$$(u_{i-1},B_{i-1})=(u_i,B_i)=(u_q,B_q)$$
on $[t_{i-2},t_{i+1}+\tau_{q+1}]$, which implies \eqref{eq1}. In order to prove (\romannumeral2) in Section \ref{sec2} holds at the level of $q+1$, we show that $\mathscr{B}^{(q+1)}\subset\mathscr{B}^{(q)}$, for any $q\geq 0$. In view of (\romannumeral6), we infer
\begin{align}\label{baohan}
\left\{t:{\rm dist}\left(t, \left\{t:(\mathring{R}_q^u,\mathring{R}_q^B)\neq0\right\}\right)<\tau_q\right\}\subset\mathscr{B}^{(q)}.
\end{align}
By the definitions \eqref{index set} and \eqref{bad q+1}, for any $t\in \mathscr{B}^{(q+1)}$, $t\in (t_i-2\tau_{q+1},t_i+3\tau_{q+1})$ for some $i\in \mathscr{C}$ or $i-1\in \mathscr{C}$, which means that there exists $s\in[t_{i-2},t_{i+1}+\tau_{q+1}]$ such that $(\mathring{R}_q^u,\mathring{R}_q^B)(s)\neq0$, then
$${\rm dist}\left(t, \left\{t:(\mathring{R}_q^u,\mathring{R}_q^B)\neq0\right\}\right)<\tau_q.$$
Thus, combining this and \eqref{baohan}, we conclude $\mathscr{B}^{(q+1)}\subset\mathscr{B}^{(q)}$.

In order to prove (\romannumeral4) in Section \ref{sec2} holds at the level of $q+1$, we estimate the cardinality of $\mathscr{C}$. For any $i\in \mathscr{C}$, there exists $t\in [t_{i-1},t_{i+1}+\tau_{q+1}]$ with $(\mathring{R}_q^u,\mathring{R}_q^B)(t)\neq0$, which implies ${\rm dist}(t,\mathscr{G}^{(q)})>\tau_q$, thus we have $[t_i,t_{i+1}]\subset(t-\tau_q,t+\tau_q)\subset\mathscr{B}^{(q)}$. Therefore, one has
$$|\mathscr{C}|\leq\frac{|\mathscr{B}^{(q)}|}{\vartheta_{q+1}},$$
and in view of the definition \eqref{bad q+1}, we conclude that
$$|\mathscr{B}^{(q+1)}|\leq2|\mathscr{C}|5\tau_{q+1}=10|\mathscr{B}^{(q)}|\frac{\tau_{q+1}}{\vartheta_{q+1}}.$$

Now we prove \eqref{bar supp}. From \eqref{Hall-MHD5} and \eqref{eq7}, we can infer that
\begin{align}\label{eq8}
&\partial _t\bar{u}_q+(-\Delta)^\alpha \bar{u}_q+{\rm div}(\bar{u}_q\otimes \bar{u}_q-\bar{B}_q\otimes \bar{B}_q)+\nabla \bar{p}^{(1)}_q\notag\\
&=\partial_t\eta_i(u_i-u_{i-1})+(1-\eta_i)\partial_tu_{i-1}+(1-\eta_i)(-\Delta)^\alpha u_{i-1}\notag\\
&\quad+(1-\eta_i)^2{\rm div}(u_{i-1}\otimes u_{i-1}-B_{i-1}\otimes B_{i-1})+(1-\eta_i)\nabla p_{i-1}\notag\\
&\quad+\eta_i\partial_tu_i+\eta_i(-\Delta)^\alpha u_i+\eta_i^2{\rm div}(u_{i}\otimes u_{i}-B_{i}\otimes B_{i})+\eta_i\nabla p_{i}\notag\\
&\quad+\eta_i(1-\eta_i){\rm div}(u_{i-1}\otimes u_{i}+u_{i}\otimes u_{i-1}-B_{i-1}\otimes B_{i}-B_{i}\otimes B_{i-1})\notag\\
&=\partial_t\eta_i(u_i-u_{i-1})\notag\\
&\quad+\eta_i(\eta_i-1){\rm div}((u_{i}-u_{i-1})\otimes(u_{i}-u_{i-1})-(B_{i}-B_{i-1})\otimes(B_{i}-B_{i-1})),
\end{align}
and similarly, one has
\begin{align}\label{eq9}
&\partial _t\bar{B}_q+(-\Delta)^\alpha \bar{B}_q+{\rm div}(\bar{u}_q\otimes \bar{B}_q-\bar{B}_q\otimes \bar{u}_q)+\nabla\times{\rm div}(\bar{B}_q\otimes \bar{B}_q)\notag\\
&=\partial_t\eta_i(B_i-B_{i-1})+\eta_i(\eta_i-1)\nabla\times{\rm div}((B_{i}-B_{i-1})\otimes(B_{i}-B_{i-1}))\notag\\
&\quad+\eta_i(\eta_i-1){\rm div}((u_{i}-u_{i-1})\otimes(B_{i}-B_{i-1})-(B_{i}-B_{i-1})\otimes(u_{i}-u_{i-1})).
\end{align}
Thus we define
\begin{align}\label{def2}
\bar{p}_q&=\bar{p}_q^{(1)}-\frac{1}{3}\eta_i(\eta_i-1)\left(|u_{i}-u_{i-1}|^2-|B_{i}-B_{i-1}|^2-\int_{\mathbb{T}^3}|u_{i}-u_{i-1}|^2-|B_{i}-B_{i-1}|^2\right)\notag\\
\mathring{\bar{R}}_q^u&=\partial_t\eta_i\mathcal{R}(u_i-u_{i-1})
+\eta_i(\eta_i-1)\left((u_{i}-u_{i-1})\mathring{\otimes}(u_{i}-u_{i-1})-(B_{i}-B_{i-1})\mathring{\otimes}(B_{i}-B_{i-1})\right),\notag\\
\mathring{\bar{R}}_q^B&=\partial_t\eta_i\mathcal{R}{\rm curl}^{-1}(B_i-B_{i-1})+\eta_i(\eta_i-1)((B_{i}-B_{i-1})\mathring{\otimes}(B_{i}-B_{i-1}))\notag\\
&+\eta_i(\eta_i-1)\mathcal{R}{\rm curl}^{-1}{\rm div}((u_{i}-u_{i-1})\otimes(B_{i}-B_{i-1})-(B_{i}-B_{i-1})\otimes(u_{i}-u_{i-1})).
\end{align}
By \eqref{eq8} and \eqref{eq9}, we know that $(\bar{u}_q,\bar{B}_q,\mathring{\bar{R}}_q^u,\mathring{\bar{R}}_q^B)$ solves equation \eqref{Hall-MHD2} with pressure $\bar{p}_q$.

We are ready to prove \eqref{bar supp}. From the definition \eqref{def2}, one has $(\mathring{\bar{R}}_q^u,\mathring{\bar{R}}_q^B)(t)=0$ on the interval $[t_i+\tau_{q+1},t_{i+1}]$. Thus we only consider the case of $t\in[t_i,t_i+\tau_{q+1}]$. If $i$ or $i-1\in \mathscr{C}$, then $(t_i-2\tau_{q+1},t_i+3\tau_{q+1})\subset\mathscr{B}^{(q+1)}$, which implies ${\rm dist}(t,\mathscr{G}^{(q+1)})>2\tau_{q+1}$. Thus it's suffices to consider the case of $i\notin\mathscr{C}$ and $i-1\notin\mathscr{C}$, that is $(\mathring{R}_q^u,\mathring{R}_q^B)(t)=0$ on $[t_{i-2},t_{i+1}+\tau_{q+1}]$. This implies $(u_q,B_q)$ is an exact solution of the Hall-MHD equations on$[t_{i-2},t_{i+1}+\tau_{q+1}]$ and then by uniqueness, we have
$$(u_{i-1},B_{i-1})=(u_i,B_i)=(u_q,B_q),$$
which yields that $(\bar{u}_q,\bar{B}_q)=(u_q,B_q)$ is an exact solution of the Hall-MHD equations. Thus, we conclude that \eqref{bar supp}.

Next, we will prove estimates \eqref{barL2}--\eqref{barRdecay}. Using the definition \eqref{bardef}, we have
\begin{align*}
&\|\bar{u}_q\|_{L^2}\leq\sum_{i=0}^{n_{q+1}}\eta_i\|u_i\|_{L^2}\leq2\delta_0^{\frac{1}{2}}-\delta_q^{\frac{1}{2}},\\
&\|\bar{u}_q\|_{H^4}\leq\sum_{i=0}^{n_{q+1}}\eta_i\|u_i\|_{H^4}\leq2\lambda_q^5,\\
&\|\bar{u}_q-u_q\|_{L^2}=\left\|\sum_{i=0}^{n_{q+1}}\eta_i(u_i-u_q)\right\|_{L^2}\leq\sum_{i=0}^{n_{q+1}}\eta_i\|u_i-u_q\|_{L^2}
\leq\vartheta_{q+1}\lambda_q^5\leq\lambda_q^{-15}\delta_{q+1}^{\frac{1}{2}},
\end{align*}
and the same estimates could be obtained in a similar way for the magnetic field. Now we prove \eqref{bardecay}. Employing \eqref{eq7}, \eqref{cutoffest} and Proposition \ref{local solu}, we have
\begin{align*}
&\|\partial_t^MD^N\bar{u}_q\|_{L^\infty({\rm supp}\eta_i;{H^4})}\\
&\leq \|\partial_t^MD^N\left((1-\eta_i)u_{i-1}\right)\|_{L^\infty({\rm supp}\eta_i;{H^4})}
+\|\partial_t^MD^N\left(\eta_iu_i\right)|_{L^\infty({\rm supp}\eta_i;{H^4})}\\
&\lesssim\sum_{M'=0}^{M}\|\partial_t^{M-M'}\eta_i\|_{C^0}\left(\|\partial_t^{M'}D^Nu_i\|_{L^\infty({\rm supp}\eta_i;{H^4})}+\|\partial_t^{M'}D^Nu_{i-1}\|_{L^\infty({\rm supp}\eta_i;{H^4})}\right)\\
&\lesssim \sum_{M'=0}^{M}\tau_{q+1}^{-(M-M')}\vartheta_{q+1}^{-M'-\frac{N}{2\alpha}}\|u_q\|_{H^4}\\
&\lesssim \tau_{q+1}^{-M}\vartheta_{q+1}^{-\frac{N}{2\alpha}}\lambda_q^5
\lesssim \tau_{q+1}^{-M-N}\lambda_q^5,
\end{align*}
where we have used the definition of $\tau_{q+1}$ and $\vartheta_{q+1}$ in \eqref{vq} and \eqref{tq}. Notice that the magnetic field $\bar{B}_q$ has a same estimate.

Next, we prove estimate \eqref{barL1}. For the first term of $\mathring{\bar{R}}_q^u$, by estimates \eqref{chaL1}, \eqref{cutoffest}, we have
\begin{align*}
&\|\partial_t\eta_i\mathcal{R}(u_i-u_{i-1})\|_{L^1}\\
&\lesssim\|\partial_t\eta_i\mathcal{R}(u_i-u_{q})\|_{L^\infty({\rm supp}\eta_i;{L^1})}+\|\partial_t\eta_i\mathcal{R}(u_{i-1}-u_{q})\|_{L^{\infty}({\rm supp}\eta_i;{L^1})}\\
&\lesssim \tau_{q+1}^{-1}\vartheta_{q+1}\lambda_q^{-\frac{3}{4}\varepsilon_R}\delta_{q+1}\lesssim\lambda_q^{-\frac{1}{2}\varepsilon_R}\delta_{q+1}.
\end{align*}
For the second term of $\mathring{\bar{R}}_q^u$, by \eqref{chaL2} we get
\begin{align*}
&\|\eta_i(\eta_i-1)\left((u_{i}-u_{i-1})\mathring{\otimes}(u_{i}-u_{i-1})-(B_{i}-B_{i-1})\mathring{\otimes}(B_{i}-B_{i-1})\right)\|_{L^1}\\
&\lesssim\|(u_{i}-u_q)-(u_{i-1}-u_q)\|^2_{L^{\infty}({\rm supp}\eta_i;{L^2})}+\|(B_{i}-B_q)-(B_{i-1}-B_q)\|^2_{L^{\infty}({\rm supp}\eta_i;{L^2})}\\
&\lesssim(\vartheta_{q+1}\lambda_q^5)^{2}\lesssim(\lambda_q^{-15}\delta_{q+1}^{\frac{1}{2}})^{2}\leq\lambda_q^{-\frac{1}{2}\varepsilon_R}\delta_{q+1}.
\end{align*}
The first and second terms of $\mathring{\bar{R}}_q^B$ can be handled in a same way, and for the third term, by \eqref{chaL2p1},  we have
\begin{align*}
&\|\eta_i(\eta_i-1)\mathcal{R}{\rm curl}^{-1}{\rm div}((u_{i}-u_{i-1})\otimes(B_{i}-B_{i-1})-(B_{i}-B_{i-1})\otimes(u_{i}-u_{i-1}))\|_{L^p}\\
&\lesssim\left\||\nabla|^{-1}\mathbb{P}_{\neq0}((u_{i}-u_{i-1})\otimes(B_{i}-B_{i-1})-(B_{i}-B_{i-1})\otimes(u_{i}-u_{i-1}))\right\|_{L^{\infty}({\rm supp}\eta_i;{L^p})}\\
&\lesssim\|(u_{i}-u_q)-(u_{i-1}-u_q)\|_{L^{\infty}({\rm supp}\eta_i;{L^{2p}})}\|(B_{i}-B_q)-(B_{i-1}-B_q)\|_{L^{\infty}({\rm supp}\eta_i;{L^{2p}})}\\
&\lesssim\vartheta^2_{q+1}\lambda_q^{\frac{14}{11}(10-\frac{3}{p})}\leq \lambda_q^{-\frac{1}{2}\varepsilon_R}\delta_{q+1},
\end{align*}
for $1<p<2$. By the definition \eqref{def2}, we conclude that \eqref{barL1}.

Finally, we prove \eqref{barRdecay}. For the first term of $\mathring{\bar{R}}_q^u$, using Leibniz's rule, \eqref{cutoffest}, \eqref{decay2}, we have
\begin{align*}
&\left\|\partial_t^MD^N\left(\partial_t\eta_i\mathcal{R}(u_i-u_{i-1})\right)\right\|_{H^4}\\
&\leq\sum_{M'=0}^{M}\|\partial_t^{M-M'+1}\eta_i\|_{L^\infty}\|\partial_t^{M'}D^N(u_i-u_{i-1})\|_{L^{\infty}({\rm supp}\eta_i;H^4)}\\
&\lesssim\sum_{M'=0}^{M}\tau_{q+1}^{-(M-M'+1)}\vartheta_{q+1}^{-M'-\frac{N}{2\alpha}}\lambda_q^5\lesssim\tau_{q+1}^{-M-1}\vartheta_{q+1}^{-\frac{N}{2\alpha}}\lambda_q^5,
\end{align*}
and for the same reason, the second term of $\mathring{\bar{R}}_q^u$ can be estimated as
\begin{align*}
&\left\|\partial_t^MD^N\left(\eta_i(\eta_i-1)\left((u_{i}-u_{i-1})\mathring{\otimes}(u_{i}-u_{i-1})-(B_{i}-B_{i-1})\mathring{\otimes}(B_{i}-B_{i-1})\right)\right)\right\|_{H^4}\\
&\lesssim\sum_{M'=0}^{M}\tau_{q+1}^{-(M-M')}\\
&\;\left\|\partial_t^{M'}D^N\left((u_{i}-u_{i-1})\mathring{\otimes}(u_{i}-u_{i-1})-(B_{i}-B_{i-1})\mathring{\otimes}(B_{i}-B_{i-1})\right)\right\|_{L^{\infty}({\rm supp}\eta_i;H^4)}\\
&\lesssim\sum_{M'=0}^{M}\tau_{q+1}^{-(M-M')}\vartheta_{q+1}^{-M'-\frac{N}{2\alpha}}\lambda_q^{10}\lesssim\tau_{q+1}^{-M-1}\vartheta_{q+1}^{-\frac{N}{2\alpha}}\lambda_q^5.
\end{align*}
The same estimate for $\mathring{\bar{R}}_q^B$ can be deduced in a similar way, thus we omit the details. By the definition \eqref{def2}, we conclude that \eqref{barRdecay}.

\section{Convex integration step: the perturbation}\label{sec4}
\subsection{Intermittent jets}
\begin{Lemma}\label{lemma}
For $\alpha=0,1,2,3$, there exist disjoint subsets $\Lambda_\alpha\in \mathbb{S}^2\cap\mathbb{Q}^3$ and smooth functions $\gamma_\xi:\mathcal{N}\rightarrow\mathbb{R}$ such that
$$R=\sum_{\xi\in \Lambda_\alpha}\gamma^2_{\xi}(R)(\xi\otimes\xi)$$
for every symmetric matrix $R$ satisfying $|R-{\rm Id}|<\delta$, where $\delta>0$ is a small constant.
\end{Lemma}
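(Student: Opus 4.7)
The plan is to deduce the lemma from the classical one-set geometric lemma of De~Lellis--Sz\'ekelyhidi, which already provides a single finite set $\bar\Lambda\subset\mathbb{S}^2\cap\mathbb{Q}^3$ and smooth coefficients $\bar\gamma_\xi\colon\mathcal{N}(\mathrm{Id})\to\mathbb{R}$ with $R=\sum_{\xi\in\bar\Lambda}\bar\gamma_\xi^2(R)\,\xi\otimes\xi$ on a neighborhood of the identity. That base case is proved by first exhibiting finitely many unit vectors $\xi_1,\dots,\xi_N$ whose outer products positively span the six-dimensional space of symmetric $3\times3$ matrices (so $\mathrm{Id}$ lies in the interior of the positive cone generated by $\{\xi_i\otimes\xi_i\}$), then using density of $\mathbb{S}^2\cap\mathbb{Q}^3$ in $\mathbb{S}^2$ together with the openness of the positive-span condition to take the $\xi_i$ rational, and finally invoking the implicit function theorem at $R=\mathrm{Id}$ to obtain smooth $\bar\gamma_\xi$ on a neighborhood.

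To upgrade to four disjoint sets, I would exploit the action of rational rotations. There exist infinitely many $O\in SO(3)\cap\mathbb{Q}^{3\times3}$, for example via the Cayley parametrization $O=(I-A)(I+A)^{-1}$ with $A$ rational skew-symmetric, and each such $O$ preserves $\mathbb{S}^2\cap\mathbb{Q}^3$. I would pick $O_0=\mathrm{Id}$ and then select $O_1,O_2,O_3$ successively from this infinite family, avoiding at each stage the finitely many rotations that would cause $O_\alpha\bar\Lambda$ to intersect a previously chosen $O_\beta\bar\Lambda$; this is possible since for each ordered pair $(\xi,\eta)\in\bar\Lambda\times\bar\Lambda$ the equation $O\xi=O_\beta\eta$ is satisfied on a strictly lower-dimensional subvariety of $SO(3)$, so its complement remains dense. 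Setting $\Lambda_\alpha:=O_\alpha\bar\Lambda$ and
$$\gamma^\alpha_\xi(R):=\bar\gamma_{O_\alpha^T\xi}\!\left(O_\alpha^T R\,O_\alpha\right),$$
the identity $(O_\alpha\eta)\otimes(O_\alpha\eta)=O_\alpha(\eta\otimes\eta)O_\alpha^T$ together with $O_\alpha^T\mathrm{Id}\,O_\alpha=\mathrm{Id}$ yields $R=\sum_{\xi\in\Lambda_\alpha}(\gamma^\alpha_\xi)^2(R)\,\xi\otimes\xi$ on the common neighborhood $\mathcal{N}:=\bigcap_\alpha O_\alpha\mathcal{N}(\mathrm{Id})O_\alpha^T$ of the identity.

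The step I expect to be the main obstacle is not the decomposition property itself (which transports transparently under orthogonal conjugation) but rather simultaneously enforcing rationality on the sphere, unit length, and pairwise disjointness of the four orbits. All three constraints, however, are either open or cofinite on the rational rotation group, so the required $O_1,O_2,O_3$ are produced by a finite pigeonhole argument on $\bar\Lambda$.
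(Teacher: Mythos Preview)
Your argument is correct but takes a genuinely different route from the paper's. The paper proceeds by writing down four explicit sets built from distinct Pythagorean triples:
\[
\Lambda_0=\Bigl\{\tfrac{3}{5}e_1\pm\tfrac{4}{5}e_2,\ \tfrac{4}{5}e_1\pm\tfrac{3}{5}e_3,\ \tfrac{3}{5}e_2\pm\tfrac{4}{5}e_3\Bigr\},
\]
and similarly $\Lambda_1,\Lambda_2,\Lambda_3$ from the triples $(5,12,13)$, $(7,24,25)$, $(8,15,17)$. For each set one checks directly that $\sum_{\xi\in\Lambda_\alpha}\tfrac{1}{2}\,\xi\otimes\xi=\mathrm{Id}$ and then applies the implicit function theorem at $R=\mathrm{Id}$; disjointness is immediate by inspection of the coordinates. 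Your approach instead fixes a single base set $\bar\Lambda$ from the classical geometric lemma and manufactures the other three by rotating through rational elements of $SO(3)$, using density of rational rotations to dodge the finitely many lower-dimensional subvarieties on which collisions could occur. The paper's construction is more concrete and gives small explicit denominators (hence a small $N_\Lambda$ when one later needs $N_\Lambda\xi\in\mathbb{Z}^3$), while your rotation argument is cleaner conceptually and generalizes at once to any finite number of disjoint copies without hunting for new Pythagorean triples. One minor wording issue: the bad rotations are not ``finitely many'' but a finite union of one-dimensional cosets in $SO(3)$; you correct this in the following clause, and the density argument goes through as stated.
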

We prove this lemma in Appendix \ref{sec6}. Since the index sets $\{\Lambda_\alpha\}_{\alpha=0,1,2,3}$ are finite, there exists $N_\Lambda\in\mathbb{N}$ such that
$$\{N_\Lambda\xi,N_\Lambda A_\xi,N_\Lambda\xi\times A_\xi\}\in N_\Lambda\mathbb{S}^2\cap\mathbb{N}^3,$$
for every $\xi\in \Lambda_\alpha$. The intermittent jets is defined in an analogous way of \cite{bcv}. The functions $\Phi,\phi,\psi$, and scaling versions $\Phi_{\ell_{\perp}},\phi_{\ell_{\perp}},\psi_{\ell_{\|}}$ are defined in \cite[Section 4.1]{bcv}. There exists a large real number $\lambda$ such that $\lambda\ell_{\perp}\in \mathbb{N}$. For $\xi\in\Lambda_0 $ or $\Lambda_1$, define
\begin{align*}
V_\xi&=\frac{1}{N_\Lambda^2\lambda^2}\psi_{\ell_{\|}}(N_\Lambda\ell_{\perp}\lambda(x\cdot\xi+\mu t))
\Phi_{\ell_{\perp}}(N_\Lambda\ell_{\perp}\lambda(x-\alpha_\xi)\cdot A_\xi,N_\Lambda\ell_{\perp}\lambda(x-\alpha_\xi)\cdot(\xi\times A_\xi))\xi,
\end{align*}
and for $\xi\in\Lambda_2 $ or $\Lambda_3$, define
\begin{align*}
\bar{V}_\xi=\frac{1}{N_\Lambda^2\lambda^2}\psi_{\ell_{\|}}(N_\Lambda\ell_{\perp}\lambda(x\cdot\xi+\bar{\mu} t))
\Phi_{\ell_{\perp}}(N_\Lambda\ell_{\perp}\lambda(x-\alpha_\xi)\cdot A_\xi,N_\Lambda\ell_{\perp}\lambda(x-\alpha_\xi)\cdot(\xi\times A_\xi))\xi,
\end{align*}
where the shifts $\alpha_\xi$ are chose to ensure $V_\xi$ and $\bar{V}_\xi$, $\xi\in \{\Lambda_\alpha\}_{\alpha=0,1,2,3}$, have disjoint supports. We let $\ell_{\perp}$ small enough for such shifts exist. For $\xi\in\Lambda_0 $ or $\Lambda_1$, the intermittent jets is defined as
\begin{align}\label{wxi}
W_\xi=\psi_{\ell_{\|}}(N_\Lambda\ell_{\perp}\lambda(x\cdot\xi+\mu t))
\phi_{\ell_{\perp}}(N_\Lambda\ell_{\perp}\lambda(x-\alpha_\xi)\cdot A_\xi,N_\Lambda\ell_{\perp}\lambda(x-\alpha_\xi)\cdot(\xi\times A_\xi))\xi,
\end{align}
and for $\xi\in\Lambda_2 $ or $\Lambda_3$, define
\begin{align}\label{wbarxi}
\bar{W}_\xi=\psi_{\ell_{\|}}(N_\Lambda\ell_{\perp}\lambda(x\cdot\xi+\bar{\mu} t))
\phi_{\ell_{\perp}}(N_\Lambda\ell_{\perp}\lambda(x-\alpha_\xi)\cdot A_\xi,N_\Lambda\ell_{\perp}\lambda(x-\alpha_\xi)\cdot(\xi\times A_\xi))\xi.
\end{align}
Immediately, one has that $W_\xi$ and $\bar{W}_\xi$ are $(\mathbb{T/\ell_{\perp}\lambda})^3$--periodic, with zero mean. Moreover, by the choice of $\alpha_\xi$, $W_\xi$ and $\bar{W}_\xi$, $\xi\in \{\Lambda_\alpha\}_{\alpha=0,1,2,3}$, have disjoint supports, that is,
\begin{align}\label{disjoint}
W_\xi\otimes W_{\xi'}=W_\xi\otimes \bar{W}_{\xi'}=\bar{W}_\xi\otimes \bar{W}_{\xi'}=0,\;\xi\neq\xi'\in\bigcup_{\alpha\in\{0,1,2,3\}}\Lambda_\alpha.
\end{align}
We introduce the following shorthand notation
\begin{align*}
&\psi_\xi=\psi_{\ell_{\|}}(N_\Lambda\ell_{\perp}\lambda(x\cdot\xi+\mu t)),\;\xi\in\Lambda_0\cup\Lambda_1,\\
&\bar{\psi}_\xi=\psi_{\ell_{\|}}(N_\Lambda\ell_{\perp}\lambda(x\cdot\xi+\bar{\mu} t)),\;\xi\in\Lambda_2\cup\Lambda_3,\\
&\Phi_\xi=\Phi_{\ell_{\perp}}(N_\Lambda\ell_{\perp}\lambda(x-\alpha_\xi)\cdot A_\xi,N_\Lambda\ell_{\perp}\lambda(x-\alpha_\xi)\cdot(\xi\times A_\xi)),\;\xi\in\Lambda_\alpha,\\
&\phi_\xi=\phi_{\ell_{\perp}}(N_\Lambda\ell_{\perp}\lambda(x-\alpha_\xi)\cdot A_\xi,N_\Lambda\ell_{\perp}\lambda(x-\alpha_\xi)\cdot(\xi\times A_\xi)),\;\xi\in\Lambda_\alpha,
\end{align*}
for $\alpha\in \{0,1,2,3\}$, and compute
\begin{align}
&\nabla\times\nabla\times V_\xi=W_\xi+W_\xi^c,\;where\;W_\xi^c=\frac{1}{N_\Lambda^2\lambda^2}\nabla\psi_\xi\times(\nabla\times(\Phi_\xi\xi)),\;\xi\in\Lambda_0\cup\Lambda_1,\label{divcorrector1}\\
&\nabla\times\nabla\times \bar{V}_\xi=\bar{W}_\xi+\bar{W}_\xi^c,\;where\;\bar{W}_\xi^c=
\frac{1}{N_\Lambda^2\lambda^2}\nabla\bar{\psi}_\xi\times(\nabla\times(\Phi_\xi\xi)),\;\xi\in\Lambda_2\cup\Lambda_3.\label{divcorrector2}
\end{align}
Thus, one has
$${\rm div}(W_\xi+W_\xi^c)={\rm div}(\bar{W}_\xi+\bar{W}_\xi^c)=0.$$
Similar to \cite[Section 4.1]{bcv}, using Lemma \ref{lemma}, one has
\begin{align}
\sum_{\xi\in\Lambda_\alpha}\gamma^2_{\xi}(R)\fint_{\mathbb{T}^3}W_\xi\otimes W_\xi dx=R,\quad\alpha=0,1,\label{eq10}\\
\sum_{\xi\in\Lambda_\alpha}\gamma^2_{\xi}(R)\fint_{\mathbb{T}^3}\bar{W}_\xi\otimes \bar{W}_\xi dx=R,\quad\alpha=2,3,\label{eq15}
\end{align}
for every symmetric matrix $R$ satisfying $|R-{\rm Id}|<\delta$.

By using scaling and Fubini's theorem, we have
\begin{align}
&\|\partial_t^M\nabla^N\psi_\xi\|_{L^p}\lesssim\ell_{\|}^{\frac{1}{p}-\frac{1}{2}}(\ell_{\perp}\ell_{\|}^{-1}\lambda)^N(\ell_{\perp}\ell_{\|}^{-1}\lambda\mu)^M,\;
\xi\in\Lambda_0\cup\Lambda_1,\label{est6}\\
&\|\partial_t^M\nabla^N\bar{\psi}_\xi\|_{L^p}\lesssim\ell_{\|}^{\frac{1}{p}-\frac{1}{2}}(\ell_{\perp}\ell_{\|}^{-1}\lambda)^N
(\ell_{\perp}\ell_{\|}^{-1}\lambda\bar{\mu})^M,\;\xi\in\Lambda_2\cup\Lambda_3,\label{est7}\\
&\|\nabla^N\phi_\xi\|_{L^p}+\|\nabla^N\Phi_\xi\|_{L^p}\lesssim\ell_{\perp}^{\frac{2}{p}-1}\lambda^N,\;\xi\in\Lambda_\alpha,\label{est8}\\
&\|\partial_t^M\nabla^NW_\xi\|_{L^p}+\lambda^2\|\partial_t^M\nabla^NV_\xi\|_{L^p}
\lesssim\ell_{\perp}^{\frac{2}{p}-1}\ell_{\|}^{\frac{1}{p}-\frac{1}{2}}\lambda^N(\ell_{\perp}\ell_{\|}^{-1}\lambda\mu)^M,\;
\xi\in\Lambda_0\cup\Lambda_1,\label{est9}\\
&\|\partial_t^M\nabla^N\bar{W}_\xi\|_{L^p}+\lambda^2\|\partial_t^M\nabla^N\bar{V}_\xi\|_{L^p}
\lesssim\ell_{\perp}^{\frac{2}{p}-1}\ell_{\|}^{\frac{1}{p}-\frac{1}{2}}\lambda^N(\ell_{\perp}\ell_{\|}^{-1}\lambda\bar{\mu})^M,\;
\xi\in\Lambda_2\cup\Lambda_3.\label{est10}
\end{align}
Since
$$(\xi\cdot\nabla)\psi_\xi=\mu^{-1}\partial_t\psi_\xi, \quad (\xi\cdot\nabla)\bar{\psi}_\xi=\bar{\mu}^{-1}\partial_t\bar{\psi}_\xi,$$
we have the following identities:
\begin{align}
&{\rm div}(W_\xi\otimes W_\xi)=2(W_\xi\cdot\nabla\psi_\xi)\phi_\xi\xi=\mu^{-1}\phi_\xi^2\partial_t\psi_\xi^2\xi,\quad\xi\in\Lambda_0\cup\Lambda_1,\label{eq11}\\
&{\rm div}(\bar{W}_\xi\otimes \bar{W}_\xi)=2(\bar{W}_\xi\cdot\nabla\bar{\psi}_\xi)\phi_\xi\xi=\bar{\mu}^{-1}\phi_\xi^2\partial_t\bar{\psi}_\xi^2\xi,\quad\xi\in\Lambda_2\cup\Lambda_3.\label{eq12}
\end{align}

\subsection{The perturbation}
Let $0\leq\tilde{\chi}_0,\tilde{\chi}\leq1$ are bump functions with support on $[0,4]$ and $[\frac{1}{4},4]$, respectively, and they form a partition of unity:
\begin{align}\label{eq13}
\tilde{\chi}_0^2(y)+\sum_{i\geq1}\tilde{\chi}_i^2(y)=1, \quad where \;\tilde{\chi}_i(y)=\tilde{\chi}(4^{-i}y),
\end{align}
for any $y>0$.
\subsubsection{The magnetic perturbations}
Define
\begin{align}\label{xib}
\chi_i^B(x,t)=\tilde{\chi}_i\left(\left\langle\frac{\mathring{\bar{R}}_q^B}{\lambda_q^{-\varepsilon_R/4}\delta_{q+1}}\right\rangle\right),
\end{align}
for all $i\geq0$. Here $\langle A\rangle=(1+|A|^2)^{\frac{1}{2}}$ and $|A|$ is the Euclidean norm of the matrix $A$. In view of \eqref{eq13}, one has $\sum_{i\geq0}|\chi_i^B|=1$. By the definition \eqref{xib} of $\chi_i^B$, we have
$${\rm supp}\chi_i^B(x,t)\subset \left\{(x,t):|\mathring{\bar{R}}_q^B|\leq4^{i+1}\lambda_q^{-\varepsilon_R/4}\delta_{q+1}\right\}.$$
Define
\begin{align}\label{defrhoi}
\rho_i=2\delta^{-1}4^{i+1}\lambda_q^{-\varepsilon_R/4}\delta_{q+1},
\end{align}
where $\delta$ is the constant introduced in Lemma \ref{lemma}, then we infer that $\rho_i^{-1}|\mathring{\bar{R}}_q^B|<\delta$ on the support of $\chi_i^B$ for all $i\geq0$.

For $i\geq0$ and $\xi\in\Lambda_{(i)+2}$, where $(i)=i\;{\rm mod}\;2$, define the coefficient function
\begin{align}\label{axiB}
a_\xi=\theta_B(t)\rho_i^{1/2}\chi_i^B(x,t)\gamma_\xi\left({\rm Id}-\frac{\mathring{\bar{R}}_q^B}{\rho_i}\right),
\end{align}
where $\theta_B(t):[0,T]\rightarrow[0,1]$ is a smooth temporal cut-off function with the following properties:
\begin{enumerate}
  \item $\theta_B(t)=1$ on $\left\{t:{\rm dist}(t,\mathscr{G}^{(q+1)})\geq2\tau_{q+1}\right\}$,
  \item $\theta_B(t)=0$ on $\left\{t:{\rm dist}(t,\mathscr{G}^{(q+1)})\leq\frac{3}{2}\tau_{q+1}\right\}$,
  \item $\|\theta_B\|_{C^M}\lesssim\tau_{q+1}^{-M}$, where the implicit constant depends on $M$.
\end{enumerate}
In view of the fact that $\mathscr{B}^{(q+1)}$ is the union of finite disjoint intervals of length $5\tau_{q+1}$, the choice of $\theta_B$ with property (3) holding is possible. From \eqref{bar supp}, we have that $\theta_B=1$ on ${\rm supp}(\mathring{\bar{R}}^u,\mathring{\bar{R}}^B)$. Thus, by using \eqref{eq15}, we obtain
\begin{align}\label{eq14}
\sum_{i\geq0}\sum_{\xi\in\Lambda_{(i)+2}}a^2_\xi\fint_{\mathbb{T}^3}\bar{W}_\xi\otimes \bar{W}_\xi dx
=\theta_B^2\sum_{i\geq0}\rho_i|\chi_i^B|^2{\rm Id}-\mathring{\bar{R}}^B.
\end{align}

Now we define the magnetic perturbations as follows. The principle part of $d_{q+1}$ is defined as
\begin{align}\label{dq+1p}
d_{q+1}^p=\sum_{i\geq0}\sum_{\xi\in\Lambda_{(i)+2}}a_\xi\bar{W}_\xi.
\end{align}
Noted that $\tilde{\chi}_i\tilde{\chi}_j=0$ if $|i-j|\geq2$, from \eqref{disjoint}, we know that the summands in $d_{q+1}^p$ have mutually disjoint supports. The incompressible corrector is defined as
\begin{align}\label{dq+1c}
d_{q+1}^c=\sum_{i\geq0}\sum_{\xi\in\Lambda_{(i)+2}}\nabla\times\left(\nabla a_\xi\times\bar{V}_\xi\right)
+\frac{1}{N_\Lambda^2\lambda_{q+1}^2}\nabla\left(a_\xi\bar{\psi}_\xi\right)\times\left(\nabla\times\left(\Phi_\xi\xi\right)\right).
\end{align}
Similar to \eqref{divcorrector2}, we have
\begin{align}\label{eq17}
d_{q+1}^p+d_{q+1}^c=\sum_{i\geq0}\sum_{\xi\in\Lambda_{(i)+2}}\nabla\times\nabla\times\left(a_\xi\bar{V}_\xi\right),
\end{align}
thus ${\rm div}(d_{q+1}^p+d_{q+1}^c)=0$. The temporal corrector $d_{q+1}^t$ is defined as
\begin{align}\label{dq+1t}
d_{q+1}^t=-\bar{\mu}^{-1}\sum_{i\geq0}\sum_{\xi\in\Lambda_{(i)+2}}\nabla\times\left( a_\xi^2\phi^2_\xi\bar{\psi}^2_\xi\xi\right).
\end{align}
Finally we define the magnetic increment $d_{q+1}$ and new magnetic field $B_{q+1}$ as
\begin{align}
d_{q+1}&=d_{q+1}^p+d_{q+1}^c+d_{q+1}^t,\label{dq+1}\\
B_{q+1}&=\bar{B}_{q}+d_{q+1}.\label{Bq+1}
\end{align}
By the construction, one has ${\rm div}d_{q+1}=0$ and $\int_{\mathbb{T}^3}d_{q+1}dx=0$. From the definition of $\theta_B$ and $d_{q+1}$, we know that if
$$t\in {\rm supp} d_{q+1}\subset {\rm supp}a_\xi\subset{\rm supp}\theta_B,$$
then ${\rm dist}(t,\mathscr{G}^{(q+1)})>\frac{3}{2}\tau_{q+1}$. Hence $B_{q+1}=\bar{B}_q$ on $\mathscr{G}^{(q+1)}$, combining \eqref{eq1} and (\romannumeral2) in Section \ref{sec2}, we know that $B_{q+1}=B_q$ on $\mathscr{G}^{(q)}$.

\subsubsection{The velocity perturbations}The construction of the velocity perturbations is similar to that of the MHD equations in \cite{lzz}. Firstly, define
\begin{align}\label{GB}
G^B=\sum_{i\geq0}\sum_{\xi\in\Lambda_{(i)+2}}a^2_\xi\fint_{\mathbb{T}^3}\bar{W}_\xi\otimes\bar{W}_\xi dx
\end{align}
and
\begin{align}\label{xiu}
\chi_i^u(x,t)=\tilde{\chi}_i\left(\left\langle\frac{\mathring{\bar{R}}_q^u-G^B}{\lambda_q^{-\varepsilon_R/4}\delta_{q+1}}\right\rangle\right),
\end{align}
for all $i\geq0$, and then by the definition of $\tilde{\chi}_i$ in \eqref{eq13}, we can infer that $\sum_{i\geq0}|\chi_i^u|^2=1$. By using the definition \eqref{xiu}, one has
$${\rm supp}\chi_i^u(x,t)\subset \left\{(x,t):|\mathring{\bar{R}}_q^u-G^B|\leq4^{i+1}\lambda_q^{-\varepsilon_R/4}\delta_{q+1}\right\}.$$
Then, we deduce that $\rho_i^{-1}|\mathring{\bar{R}}_q^u-G^B|<\delta$ on the support of $\chi_i^u$ for all $i\geq0$.

For $i\geq0$ and $\xi\in\Lambda_{(i)}$, define the coefficient function
\begin{align}\label{axiu}
a_\xi=\theta_u(t)\rho_i^{1/2}\chi_i^u(x,t)\gamma_\xi\left({\rm Id}-\frac{\mathring{\bar{R}}_q^u-G^B}{\rho_i}\right),
\end{align}
where $\theta_u(t):[0,T]\rightarrow[0,1]$ is a smooth temporal cut-off function with the following properties:
\begin{enumerate}
  \item $\theta_u(t)=1$ on $\left\{t:{\rm dist}(t,\mathscr{G}^{(q+1)})\geq\frac{3}{2}\tau_{q+1}\right\}$,
  \item $\theta_u(t)=0$ on $\left\{t:{\rm dist}(t,\mathscr{G}^{(q+1)})\leq\tau_{q+1}\right\}$,
  \item $\|\theta_u\|_{C^M}\lesssim\tau_{q+1}^{-M}$, where the implicit constant depends on $M$.
\end{enumerate}
In view of the fact that $\mathscr{B}^{(q+1)}$ is the union of finite disjoint intervals of length $5\tau_{q+1}$, the choice of $\theta_u$ with property (3) holding is possible. By the definition \eqref{GB}, one has
$${\rm supp}G^B\subset\left\{t:{\rm dist}(t,\mathscr{G}^{(q+1)})\geq\frac{3}{2}\tau_{q+1}\right\},$$
which together with \eqref{bar supp} yields that
\begin{align*}
{\rm supp}(\mathring{\bar{R}}_q^u-G^B)\subset {\rm supp}\mathring{\bar{R}}_q^u\cup{\rm supp}G^B\subset\left\{t:{\rm dist}(t,\mathscr{G}^{(q+1)})\geq\frac{3}{2}\tau_{q+1}\right\}.
\end{align*}
Thus by the definition of $\theta_u$, we know $\theta_u=1$ on ${\rm supp}(\mathring{\bar{R}}_q^u-G^B)$.
From \eqref{eq10} and definition \eqref{axiu}, we compute
\begin{align}\label{eq16}
\sum_{i\geq0}\sum_{\xi\in\Lambda_{(i)}}a^2_\xi\fint_{\mathbb{T}^3}W_\xi\otimes W_\xi dx
=\theta_u^2\sum_{i\geq0}\rho_i|\chi_i^u|^2{\rm Id}-(\mathring{\bar{R}}^u_q-G^B).
\end{align}

Now we define the velocity perturbations as follows. The principal part of $w_{q+1}$ is defined as
\begin{align}\label{wq+1p}
w_{q+1}^p=\sum_{i\geq0}\sum_{\xi\in\Lambda_{(i)}}a_\xi W_\xi.
\end{align}
Noted that $\tilde{\chi}_i\tilde{\chi}_j=0$ if $|i-j|\geq2$, from \eqref{disjoint}, we know that the summands in $w_{q+1}^p$ have mutually disjoint supports. The incompressible corrector is defined as
\begin{align}\label{wq+1c}
w_{q+1}^c=\sum_{i\geq0}\sum_{\xi\in\Lambda_{(i)}}\nabla\times\left(\nabla a_\xi\times V_\xi\right)
+\frac{1}{N_\Lambda^2\lambda_{q+1}^2}\nabla\left(a_\xi\psi_\xi\right)\times\left(\nabla\times\left(\Phi_\xi\xi\right)\right).
\end{align}
Similar to \eqref{divcorrector1}, we have
\begin{align}\label{eq18}
w_{q+1}^p+w_{q+1}^c=\sum_{i\geq0}\sum_{\xi\in\Lambda_{(i)}}\nabla\times\nabla\times\left(a_\xi V_\xi\right).
\end{align}
Thus, ${\rm div}(w_{q+1}^p+w_{q+1}^c)=0$. The temporal corrector $w_{q+1}^t$ is defined by
\begin{align}\label{wq+1t}
w_{q+1}^t=-\mu^{-1}\sum_{i\geq0}\sum_{\xi\in\Lambda_{(i)}}\mathbb{P}_{H}\mathbb{P}_{\neq0}\left(a_\xi^2\phi^2_\xi\psi^2_\xi\xi\right)
+\bar{\mu}^{-1}\sum_{i\geq0}\sum_{\xi\in\Lambda_{(i)+2}}\mathbb{P}_{H}\mathbb{P}_{\neq0}\left( a_\xi^2\phi^2_\xi\bar{\psi}^2_\xi\xi\right).
\end{align}
Finally, we define the magnetic increment $w_{q+1}$ and new velocity field $u_{q+1}$ as
\begin{align}
w_{q+1}&=w_{q+1}^p+w_{q+1}^c+w_{q+1}^t,\label{wq+1}\\
u_{q+1}&=\bar{u}_{q}+w_{q+1}.\label{uq+1}
\end{align}
By the construction, one has ${\rm div}w_{q+1}=0$ and $\int_{\mathbb{T}^3}w_{q+1}dx=0$. From the definition of $\theta_u$, $\theta_B$ and $w_{q+1}$, we know that
$${\rm supp} w_{q+1}\subset{\rm supp}\theta_u\cup {\rm supp}\theta_B\subset
\left\{t:{\rm dist}(t,\mathscr{G}^{(q+1)})\geq\tau_{q+1}\right\}.$$
Hence $u_{q+1}=\bar{u}_q$ on $\mathscr{G}^{(q+1)}$, combining \eqref{eq1} and (\romannumeral2) in Section \ref{sec2}, we know that $u_{q+1}=u_q$ on $\mathscr{G}^{(q)}$.

\subsubsection{Estimates of the perturbation} This part can be referred to \cite[Section 4.2.3]{bcv}, and here we omit some details.

\begin{Lemma}\label{lemma1}
For $q\geq0$, there exists $\bar{i}_{max}(q)\geq0$ and $i_{max}(q)\geq0$ such that
\begin{align}\label{=0}
\chi_i^B=0\;for\;all\;i>\bar{i}_{max}\quad and\quad \chi_i^u=0\;for\;all\;i>i_{max},
\end{align}
moreover, we have
\begin{align}
&4^{\bar{i}_{max}}\leq\left(\lambda_1^{-\frac{9}{2}\beta}\lambda_{q}^{6+\frac{4\rho}{\rho-1}}\right)^{\frac{6}{11}},\label{est13}\\
&4^{i_{max}}\leq\lambda_q^{\frac{\varepsilon_R}{32}+\frac{6}{11}(6+\frac{4\rho}{\rho-1})},\label{est11}\\
&\bar{i}_{max},i_{max}\leq\lambda_q^{\frac{\varepsilon_R}{32}}.\label{est12}
\end{align}
For $0\leq i\leq\bar{i}_{max}$ and $0\leq j\leq i_{max}$
\begin{align}
\|\chi_i^B\|_{L^2}&\lesssim2^{-i}, \label{est14}\\
\|\chi_i^B\|_{C^N_{t,x}}&\lesssim\tau_{q+1}^{-3N},\label{est15}\\
\|\chi_j^u\|_{L^2}&\lesssim2^{-j}\lambda_q^{\frac{\varepsilon_R}{64}}, \label{est16}\\
\|\chi_j^u\|_{C^N_{t,x}}&\lesssim\tau_{q+1}^{-3N},\label{est17}
\end{align}
for all $N\geq1$.
\end{Lemma}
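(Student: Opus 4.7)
The plan is to combine three classical tools: a Gagliardo--Nirenberg--Sobolev (GNS) interpolation that turns the inductive $L^1$ and $H^4$ bounds on the glued stress into pointwise estimates, Chebyshev's inequality to bound the measure of the level set supporting each cut-off, and the Fa\`a di Bruno chain rule for the smoothness statements. All along I will use that $\mathring{\bar{R}}_q^B$ and $\mathring{\bar{R}}_q^u$ obey \eqref{barL1} and \eqref{barRdecay} of Proposition~\ref{glue prop}, while $\mathring{\bar{R}}_q^u-G^B$ inherits comparable bounds once $G^B$ itself is controlled.

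First I would establish \eqref{est13} together with the corresponding half of \eqref{=0}. The 3D GNS inequality $\|f\|_{L^\infty(\mathbb{T}^3)}\lesssim\|f\|_{L^1}^{5/11}\|f\|_{H^4}^{6/11}$, combined with \eqref{barL1} and the $M=N=0$ case of \eqref{barRdecay}, gives
\begin{align*}
\frac{\|\mathring{\bar{R}}_q^B\|_{L^\infty}}{\lambda_q^{-\varepsilon_R/4}\delta_{q+1}}\lesssim\left(\frac{\tau_{q+1}^{-1}\lambda_q^5}{\lambda_q^{-\varepsilon_R/4}\delta_{q+1}}\right)^{6/11}.
\end{align*}
Inserting \eqref{tq} for $\tau_{q+1}$ and using $\delta_{q+1}^{-1/2}\leq\lambda_1^{-3\beta/2}\lambda_q^{\beta b}$, the right-hand side equals $\lambda_1^{-27\beta/11}\lambda_q^{\frac{6}{11}(5+\frac{\varepsilon_R}{2}+3\beta b+\frac{4\rho}{\rho-1})}$, which is dominated by $(\lambda_1^{-9\beta/2}\lambda_q^{6+4\rho/(\rho-1)})^{6/11}$ since $\varepsilon_R,\beta b\ll 1$. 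As $\chi_i^B$ must vanish whenever $4^{i-1}$ exceeds the normalized $L^\infty$ bound, \eqref{est13} follows, and taking logarithms with $a$ large enough yields \eqref{est12}. The estimate \eqref{est11} is proved in exactly the same fashion applied to $\mathring{\bar{R}}_q^u-G^B$, after noting $\|G^B\|_{L^\infty}\lesssim 4^{\bar{i}_{max}}\lambda_q^{-\varepsilon_R/4}\delta_{q+1}$ (from $\|a_\xi\|_{L^\infty}^2\lesssim\rho_i$) is absorbed into the target using \eqref{est13}.

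For the $L^2$ bound \eqref{est14} I apply Chebyshev: on ${\rm supp}\,\chi_i^B$ one has $|\mathring{\bar{R}}_q^B|\gtrsim 4^i\lambda_q^{-\varepsilon_R/4}\delta_{q+1}$, hence
\begin{align*}
\|\chi_i^B\|_{L^2}^2\leq\bigl|\bigl\{|\mathring{\bar{R}}_q^B|\gtrsim 4^i\lambda_q^{-\varepsilon_R/4}\delta_{q+1}\bigr\}\bigr|\leq\frac{\|\mathring{\bar{R}}_q^B\|_{L^1}}{4^i\lambda_q^{-\varepsilon_R/4}\delta_{q+1}}\lesssim 4^{-i}.
\end{align*}
For \eqref{est16} the same scheme works once $\|G^B\|_{L^1}$ is controlled. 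Using $\sum_\xi a_\xi^2\lesssim\rho_i(\chi_i^B)^2$ and $\fint|\bar{W}_\xi|^2\lesssim 1$ together with \eqref{est14} and \eqref{est12}, one obtains $\|G^B\|_{L^1}\lesssim\bar{i}_{max}\lambda_q^{-\varepsilon_R/4}\delta_{q+1}\lesssim\lambda_q^{-7\varepsilon_R/32}\delta_{q+1}$. Chebyshev applied to $\mathring{\bar{R}}_q^u-G^B$ then produces $\|\chi_i^u\|_{L^2}\lesssim 2^{-i}\lambda_q^{\varepsilon_R/64}$, the mild loss matching precisely the loss in the $G^B$ bound.

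The hard part is the smoothness estimates \eqref{est15} and \eqref{est17}, which I would attack via Fa\`a di Bruno applied to $\chi_i^B=\tilde{\chi}_i(\langle\mathring{\bar{R}}_q^B/(\lambda_q^{-\varepsilon_R/4}\delta_{q+1})\rangle)$. The saving observation is that on ${\rm supp}\,\chi_i^B$ the argument of $\tilde{\chi}$ is of size $\sim 4^i$, so every derivative $\tilde{\chi}_i^{(k)}$ carries the compensating factor $4^{-ki}$ and is uniformly bounded. From \eqref{barRdecay} combined with the Sobolev embedding in 3D, each spatial or first-order temporal derivative of the normalized stress is controlled by
\begin{align*}
\frac{\tau_{q+1}^{-2}\lambda_q^5}{\lambda_q^{-\varepsilon_R/4}\delta_{q+1}}\lesssim\tau_{q+1}^{-3}\lambda_q^{5-\frac{4\rho}{\rho-1}+\beta b}\leq\tau_{q+1}^{-3},
\end{align*}
where the last step uses $\rho\in(1,5/4)$, which forces $4\rho/(\rho-1)\geq 20$, together with $\beta b\ll 1$. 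Iterating $N$ times gives \eqref{est15}; higher-order time derivatives (beyond $M=1$ in \eqref{barRdecay}) are recovered by differentiating the approximating Hall--MHD system \eqref{Hall-MHD2} and invoking parabolic regularization on the relevant subintervals. The estimate \eqref{est17} follows by the same chain-rule argument, as the derivatives of $G^B$ reduce to derivatives of $a_\xi$, $\chi_i^B$ and $\gamma_\xi$, all of which already fit under the $\tau_{q+1}^{-3N}$ envelope.
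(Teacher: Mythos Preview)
Your proposal is correct and follows essentially the same strategy as the paper: the GNS interpolation $\|f\|_{L^\infty}\lesssim\|f\|_{L^1}^{5/11}\|f\|_{H^4}^{6/11}$ combined with \eqref{barL1}--\eqref{barRdecay} for the pointwise bounds and the cutoff indices, Chebyshev for the $L^2$ estimates (the paper passes through $L^1$ first and interpolates, which is equivalent), and the Fa\`a di Bruno/composition estimate (the paper cites \cite[Proposition C.1]{bdis}) for the $C^N_{t,x}$ bounds. Your treatment of $G^B$ in $L^\infty$ and $L^1$ mirrors the paper's; the only slip is that $\|G^B\|_{L^\infty}$ carries an additional $\bar{i}_{max}\leq\lambda_q^{\varepsilon_R/32}$ prefactor from the sum over $i$, which is precisely the source of the extra $\lambda_q^{\varepsilon_R/32}$ in \eqref{est11}---you absorb it correctly but do not display it. You are also slightly more careful than the paper in flagging that time derivatives of order $\geq 2$ on the stress must be recovered from the equation, a point the paper uses implicitly.
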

\begin{proof}
It follows from the Gagliardo--Nirenberg--Sobolev inequalities, Proposition \ref{glue prop}, \eqref{RestL1} and \eqref{RestH4} that
\begin{align}\label{est22}
\left\langle\lambda_q^{\frac{\varepsilon_R}{4}}\delta_{q+1}^{-1}\mathring{\bar{R}}^B_q\right\rangle&\leq1
+\lambda_q^{\frac{\varepsilon_R}{4}}\delta_{q+1}^{-1}\|\mathring{\bar{R}}^B_q\|_{L^\infty}\lesssim1
+\lambda_q^{\frac{\varepsilon_R}{4}}\delta_{q+1}^{-1}\|\mathring{\bar{R}}^B_q\|_{L^1}^{\frac{5}{11}}\|\mathring{\bar{R}}^B_q\|_{H^4}^{\frac{6}{11}}\notag\\
&\leq C\left(\lambda_q^{\frac{\varepsilon_R}{4}}\delta_{q+1}^{-1}\right)^{\frac{6}{11}}(\tau_{q+1}^{-1}\lambda_q^5)^{\frac{6}{11}}.
\end{align}
Thus, we define
\begin{align}\label{ibar}
\bar{i}_{max}(q)={\rm max}\left\{i\in\mathbb{N}:4^{i-1}\leq C\left(\lambda_q^{\frac{\varepsilon_R}{4}}\delta_{q+1}^{-1}\right)^{\frac{6}{11}}(\tau_{q+1}^{-1}\lambda_q^5)^{\frac{6}{11}}\right\}.
\end{align}
Therefore, for any $i>\bar{i}_{max}$,
$$4^{i-1}>C\left(\lambda_q^{\frac{\varepsilon_R}{4}}\delta_{q+1}^{-1}\right)^{\frac{6}{11}}(\tau_{q+1}^{-1}\lambda_q^5)^{\frac{6}{11}}
\geq\left\langle\lambda_q^{\frac{\varepsilon_R}{4}}\delta_{q+1}^{-1}\mathring{\bar{R}}^B_q\right\rangle,$$
which means that $\chi_i^B(x,t)=0$. Moreover, by \eqref{ibar} and the definition of $\tau_{q+1}$ in \eqref{tq}, one has
\begin{align*}
4^{\bar{i}_{max}-1}&\leq C\left(\lambda_q^{\frac{\varepsilon_R}{4}}\delta_{q+1}^{-1}\right)^{\frac{6}{11}}(\tau_{q+1}^{-1}\lambda_q^5)^{\frac{6}{11}}
\lesssim\left(\lambda_1^{-\frac{9}{2}\beta}\lambda_q^{\frac{4\rho}{\rho-1}+5+3\beta b+\frac{\varepsilon_R}{2}}\right)^{\frac{6}{11}}\\
&\leq\frac{1}{4}\left(\lambda_1^{-\frac{9}{2}\beta}\lambda_q^{\frac{4\rho}{\rho-1}+6}\right)^{\frac{6}{11}},
\end{align*}
which yields that \eqref{est13}. Therefore, $\bar{i}_{max}\leq\lambda_q^{\frac{\varepsilon_R}{32}}$ as $a$ large enough depends on $\varepsilon_R$. By the definition of $\chi_i^B$, Chebyshev's inequality and estimate \eqref{barL1}, we have
\begin{align*}
\|\chi_i^B\|_{L^1_x}&\leq\sup_{t}\left|\left\{x:4^{i-1}\leq
\left\langle\lambda_q^{\frac{\varepsilon_R}{4}}\delta_{q+1}^{-1}\mathring{\bar{R}}^B_q\right\rangle\leq4^{i+1}\right\}\right|\\
&\leq\sup_{t}\left|\left\{x:4^{i-2}\leq\lambda_q^{\frac{\varepsilon_R}{4}}\delta_{q+1}^{-1}|\mathring{\bar{R}}^B_q|\right\}\right|\\
&\lesssim4^{-i}\lambda_q^{\frac{\varepsilon_R}{4}}\delta_{q+1}^{-1}\left\|\mathring{\bar{R}}^B_q\right\|_{L^1_x}\lesssim4^{-i}.
\end{align*}
Then, the interpolation inequality yields that \eqref{est14}. Next, we prove \eqref{est15}. From \eqref{barRdecay} and \cite[Proposition C.1]{bdis}, we have
\begin{align}\label{est19}
\|\chi_i^B\|_{C^N_{t,x}}&\lesssim\left\|\left\langle\lambda_q^{\frac{\varepsilon_R}{4}}\delta_{q+1}^{-1}\mathring{\bar{R}}^B_q\right\rangle\right\|_{C^N_{t,x}}
+\left\|\left\langle\lambda_q^{\frac{\varepsilon_R}{4}}\delta_{q+1}^{-1}\mathring{\bar{R}}^B_q\right\rangle\right\|_{C^1_{t,x}}^N\notag\\
&\lesssim1+\lambda_q^{\frac{\varepsilon_R}{4}}\delta_{q+1}^{-1}\left\|\mathring{\bar{R}}^B_q\right\|_{C^N_{t,x}}
+\left(\lambda_q^{\frac{\varepsilon_R}{4}}\delta_{q+1}^{-1}\left\|\mathring{\bar{R}}^B_q\right\|_{C^1_{t,x}}\right)^N\notag\\
&\lesssim1+\lambda_q^{\frac{\varepsilon_R}{4}}\delta_{q+1}^{-1}\tau_{q+1}^{-N-1}\lambda_q^5
+\left(\lambda_q^{\frac{\varepsilon_R}{4}}\delta_{q+1}^{-1}\tau_{q+1}^{-2}\lambda_q^5\right)^N\notag\\
&\lesssim\left(\lambda_q^{\frac{\varepsilon_R}{4}}\delta_{q+1}^{-1}\tau_{q+1}^{-2}\lambda_q^5\right)^N,
\end{align}
then we get \eqref{est15} immediately by \eqref{vqtqest}. Therefore, by \eqref{est14}, \eqref{ibar}, and the definition of $\rho_i$ in \eqref{defrhoi}, for $\xi\in\Lambda_{(i)+2}$, we deduce that
\begin{align}
\|a_\xi\|_{L^2_x}&\lesssim\rho_i^{\frac{1}{2}}\|\chi_i^B\|_{L^2_x}\lesssim\left(\lambda_q^{-\frac{\varepsilon_R}{4}}\delta_{q+1}4^i\right)^{\frac{1}{2}}2^{-i}
\lesssim\lambda_q^{-\frac{\varepsilon_R}{8}}\delta_{q+1}^{\frac{1}{2}},\label{est18}\\
\|a_\xi\|_{L^\infty_x}&\lesssim\rho_i^{\frac{1}{2}}\lesssim\left(\lambda_q^{-\frac{\varepsilon_R}{4}}\delta_{q+1}4^{\bar{i}_{max}}\right)^{\frac{1}{2}}
\lesssim\left(\lambda_q^{-\frac{\varepsilon_R}{4}}\delta_{q+1}\right)^{\frac{5}{22}}\left(\tau_{q+1}^{-1}\lambda_q^5\right)^{\frac{3}{11}},\label{est20}
\end{align}
which implies
\begin{align}\label{est21}
\|G^B\|_{L^\infty}\lesssim\bar{i}_{max}\|a_\xi\|_{L^\infty}^2\lesssim\lambda_q^{\frac{\varepsilon_R}{32}}
\left(\lambda_q^{-\frac{\varepsilon_R}{4}}\delta_{q+1}\right)^{\frac{5}{11}}\left(\tau_{q+1}^{-1}\lambda_q^5\right)^{\frac{6}{11}}.
\end{align}
By \eqref{est21}, similar to estimate \eqref{est22}, one has
\begin{align*}
\left\langle\frac{\mathring{\bar{R}}_q^u-G^B}{\lambda_q^{-\varepsilon_R/4}\delta_{q+1}}\right\rangle&\lesssim1+\lambda_q^{\varepsilon_R/4}\delta_{q+1}^{-1}
\left(\|\mathring{\bar{R}}_q^u\|_{L^\infty}+\|G^B\|_{L^\infty}\right)\\
&\lesssim\lambda_q^{\frac{\varepsilon_R}{32}}\left(\lambda_q^{\frac{\varepsilon_R}{4}}\delta_{q+1}^{-1}\right)^{\frac{6}{11}}
\left(\tau_{q+1}^{-1}\lambda_q^5\right)^{\frac{6}{11}}\\
&\lesssim\lambda_q^{\frac{\varepsilon_R}{32}}\left(\lambda_1^{-\frac{9}{2}\beta}\lambda_q^{\frac{4\rho}{\rho-1}+5+\frac{\varepsilon_R}{2}+3\beta b}\right)^{\frac{6}{11}}\\
&\leq\frac{1}{4}\lambda_q^{\frac{\varepsilon_R}{32}+\frac{6}{11}(\frac{4\rho}{\rho-1}+6)},
\end{align*}
where we have used the definition of $\tau_{q+1}$ and $\delta_{q+1}$. Thus, we define
\begin{align}\label{i}
i_{max}={\rm max}\left\{i\in\mathbb{N}:4^{i-1}\leq\frac{1}{4}\lambda_q^{\frac{\varepsilon_R}{32}+\frac{6}{11}(\frac{4\rho}{\rho-1}+6)}\right\},
\end{align}
then for any $i>i_{max}$,
$$4^{i-1}>\frac{1}{4}\lambda_q^{\frac{\varepsilon_R}{32}+\frac{6}{11}(\frac{4\rho}{\rho-1}+6)}
\geq\left\langle\frac{\mathring{\bar{R}}_q^u-G^B}{\lambda_q^{-\varepsilon_R/4}\delta_{q+1}}\right\rangle,$$
which implies $\chi_i^u=0$. By the definition \eqref{i}, we have
$$4^{i_{max}}\leq\lambda_q^{\frac{\varepsilon_R}{32}+\frac{6}{11}(\frac{4\rho}{\rho-1}+6)},$$
and $i_{max}\leq\lambda_q^{\frac{\varepsilon_R}{32}}$ as $a$ large enough depending on $\varepsilon_R$. We complete the proof of \eqref{est11} and \eqref{est12}.

Now we prove \eqref{est16}. Applying the definition of $G^B$ in \eqref{GB} and estimate \eqref{est18}, one has
\begin{align*}
\|G^B\|_{L^1}\lesssim\sum_{i\geq0}\sum_{\xi\in\Lambda_{(i)+2}}\|a_\xi\|_{L^2_x}^2\lesssim i_{max}\lambda_q^{-\frac{\varepsilon_R}{4}}\delta_{q+1}
\lesssim\lambda_q^{-\frac{7\varepsilon_R}{32}}\delta_{q+1}.
\end{align*}
Similar to the proof of \eqref{est14}, for $0\leq j\leq i_{max}$, we have
\begin{align*}
\|\chi_j^u\|_{L^1_x}&\leq\sup_{t}\left|\left\{x:4^{j-2}\leq\lambda_q^{\frac{\varepsilon_R}{4}}\delta_{q+1}^{-1}|\mathring{\bar{R}}^u_q-G^B|\right\}\right|\\
&\lesssim4^{-j}\lambda_q^{\frac{\varepsilon_R}{4}}\delta_{q+1}^{-1}\left(\lambda_q^{-\frac{\varepsilon_R}{4}}\delta_{q+1}
+\lambda_q^{-\frac{7\varepsilon_R}{32}}\delta_{q+1}\right)\lesssim4^{-j}\lambda_q^{\frac{\varepsilon_R}{32}}.
\end{align*}
Then the interpolation inequality yields that \eqref{est16}. By using \eqref{GB}, \eqref{ibar}, \eqref{est12} and \eqref{est19}, we have
\begin{align}\label{est24}
\|G^B\|_{C^N_{t,x}}&\lesssim\bar{i}_{max}\|a^2_\xi\|_{C^N_{t,x}}\lesssim\lambda_q^{\frac{\varepsilon_R}{32}}\rho_i\||\chi_i^B|^2\|_{C^N_{t,x}}\notag\\
&\lesssim\lambda_q^{\frac{\varepsilon_R}{32}}\lambda_q^{-\frac{\varepsilon_R}{4}}\delta_{q+1}4^{\bar{i}_{max}}
\left(\lambda_q^{\frac{\varepsilon_R}{4}}\delta_{q+1}^{-1}\tau_{q+1}^{-2}\lambda_q^5\right)^N\notag\\
&\lesssim\lambda_q^{\frac{\varepsilon_R}{32}}\left(\lambda_q^{-\frac{\varepsilon_R}{4}}\delta_{q+1}\right)^{\frac{5}{11}}(\tau_{q+1}^{-1}\lambda_q^5)^{\frac{6}{11}}
\left(\lambda_q^{\frac{\varepsilon_R}{4}}\delta_{q+1}^{-1}\tau_{q+1}^{-2}\lambda_q^5\right)^N.
\end{align}
Similar to \eqref{est19}, using \eqref{est24} and the fact
$$\|\mathring{\bar{R}}^u_q\|_{C^N_{t,x}}\lesssim\tau_{q+1}^{-N-1}\lambda_q^5,$$
which deduced by \eqref{barRdecay}, we have
\begin{align}\label{est23}
\|\chi_j^u\|_{C^N_{t,x}}&\lesssim1+\lambda_q^{\frac{\varepsilon_R}{4}}\delta_{q+1}^{-1}\left\|\mathring{\bar{R}}^u_q-G^B\right\|_{C^N_{t,x}}
+\left(\lambda_q^{\frac{\varepsilon_R}{4}}\delta_{q+1}^{-1}\left\|\mathring{\bar{R}}^u_q-G^B\right\|_{C^1_{t,x}}\right)^N\notag\\
&\lesssim\lambda_q^{\frac{\varepsilon_R}{32}}\left(\lambda_q^{\frac{\varepsilon_R}{4}}\delta_{q+1}^{-1}\right)^{\frac{6}{11}}(\tau_{q+1}^{-1}\lambda_q^5)^{\frac{6}{11}}
\left(\lambda_q^{\frac{\varepsilon_R}{4}}\delta_{q+1}^{-1}\tau_{q+1}^{-2}\lambda_q^5\right)^N\notag\\
&\;\;+\left(\lambda_q^{\frac{\varepsilon_R}{32}}\left(\lambda_q^{\frac{\varepsilon_R}{4}}\delta_{q+1}^{-1}\right)^{\frac{17}{11}}(\tau_{q+1}^{-1}\lambda_q^5)^{\frac{6}{11}}
\tau_{q+1}^{-2}\lambda_q^5\right)^N\notag\\
&\leq\left(\lambda_q^8\tau_{q+1}^{-2-\frac{6}{11}}\right)^N\leq\tau_{q+1}^{-3},
\end{align}
for $0\leq j\leq i_{max}$, where we have used the fact that $\varepsilon_R\ll1$, $\beta b\ll1$ and \eqref{vqtqest} in the last two inequalities.
\end{proof}
\begin{Lemma}\label{lemma2}
Let $N\geq1$. For $\xi\in \Lambda_{(i)+2}$, we have
\begin{align}
\|a_\xi\|_{L^2_x}&\lesssim\lambda_q^{-\varepsilon_R/8}\delta_{q+1}^{1/2},\label{est25}\\
\|a_\xi\|_{L^\infty_x}&\lesssim2^{\bar{i}_{max}}\lambda_q^{-\varepsilon_R/8}\delta_{q+1}^{1/2}\leq\left(\tau_{q+1}^{-1}\lambda_q^5\right)^{\frac{3}{11}},\label{est26}\\
\|a_\xi\|_{C^N_{t,x}}&\lesssim\tau_{q+1}^{-3N}.\label{est27}
\end{align}
For $\xi\in \Lambda_{(i)}$, we have
\begin{align}
\|a_\xi\|_{L^2_x}&\lesssim\lambda_q^{-\varepsilon_R/16}\delta_{q+1}^{1/2},\label{est28}\\
\|a_\xi\|_{L^\infty_x}&\lesssim2^{i_{max}}\lambda_q^{-\varepsilon_R/8}\delta_{q+1}^{1/2}\leq\left(\tau_{q+1}^{-1}\lambda_q^6\right)^{\frac{3}{11}},\label{est29}\\
\|a_\xi\|_{C^N_{t,x}}&\lesssim\tau_{q+1}^{-3N-1}.\label{est30}
\end{align}
\end{Lemma}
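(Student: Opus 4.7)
\medskip

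The plan is to verify each of the six bounds by decomposing the coefficient function as a product
\[
a_\xi = \theta(t)\,\rho_i^{1/2}\,\chi_i(x,t)\,\gamma_\xi\!\bigl(\mathrm{Id}-\tfrac{M}{\rho_i}\bigr),
\]
where $\theta=\theta_B,\ \chi_i=\chi_i^B$ and $M=\mathring{\bar{R}}^B_q$ in the magnetic case, and $\theta=\theta_u,\ \chi_i=\chi_j^u$ and $M=\mathring{\bar{R}}^u_q-G^B$ in the velocity case. Two facts are used throughout: first, $\rho_i$ is a scalar constant in $(x,t)$, so it commutes with all derivatives; second, on $\mathrm{supp}\,\chi_i$ one has $|M/\rho_i|<\delta$, which places the argument of $\gamma_\xi$ in the region where Lemma \ref{lemma} applies and where $\gamma_\xi$ is $C^\infty$ with all derivatives bounded. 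The $L^2$, $L^\infty$ and $C^N_{t,x}$ bounds will each follow from Lemma \ref{lemma1} together with, respectively, the boundedness of $\gamma_\xi$, a Faà di Bruno / Leibniz decomposition, and the gluing estimates of Proposition \ref{glue prop}.

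I would do the $L^2$ and $L^\infty$ bounds first, since they are already implicitly contained in the proof of Lemma \ref{lemma1}. For \eqref{est25} and \eqref{est28}, write $\|a_\xi\|_{L^2}\lesssim \rho_i^{1/2}\|\chi_i\|_{L^2}$; plugging in $\rho_i\lesssim 4^i\lambda_q^{-\varepsilon_R/4}\delta_{q+1}$ from \eqref{defrhoi}, the bound \eqref{est14} $\|\chi_i^B\|_{L^2}\lesssim 2^{-i}$ yields $\lambda_q^{-\varepsilon_R/8}\delta_{q+1}^{1/2}$, while \eqref{est16} $\|\chi_j^u\|_{L^2}\lesssim 2^{-j}\lambda_q^{\varepsilon_R/64}$ yields $\lambda_q^{-\varepsilon_R/16}\delta_{q+1}^{1/2}$ after noticing $-\varepsilon_R/8+\varepsilon_R/64<-\varepsilon_R/16$. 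For \eqref{est26} and \eqref{est29}, since $0\le\chi_i\le 1$, simply $\|a_\xi\|_{L^\infty}\lesssim \rho_{\bar i_{\max}}^{1/2}\lesssim 2^{\bar i_{\max}}\lambda_q^{-\varepsilon_R/8}\delta_{q+1}^{1/2}$ (resp.\ with $i_{\max}$), and the second inequalities in these claims are exactly the computations in \eqref{est20} and its velocity analogue, using \eqref{est13} and \eqref{est11} to replace $4^{\bar i_{\max}}$ (resp.\ $4^{i_{\max}}$) by the stated power of $\tau_{q+1}^{-1}\lambda_q^5$ (resp.\ $\lambda_q^6$), with the leftover $(\lambda_q^{-\varepsilon_R/4}\delta_{q+1})^{5/22}\le 1$ being absorbed.

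The crux of the proof is the $C^N_{t,x}$ bounds \eqref{est27} and \eqref{est30}. I would expand by Leibniz across the four-factor product, using $\|\theta\|_{C^M}\lesssim\tau_{q+1}^{-M}$ from its definition, $\|\chi_i^B\|_{C^N}\lesssim\tau_{q+1}^{-3N}$ and $\|\chi_j^u\|_{C^N}\lesssim\tau_{q+1}^{-3N}$ from \eqref{est15} and \eqref{est17}, and a Faà di Bruno bound
\[
\bigl\|\gamma_\xi(\mathrm{Id}-M/\rho_i)\bigr\|_{C^N_{t,x}}\lesssim 1+\rho_i^{-1}\|M\|_{C^N_{t,x}}+\bigl(\rho_i^{-1}\|M\|_{C^1_{t,x}}\bigr)^N,
\]
where $\gamma_\xi$'s $C^N$ norm on the ball of radius $\delta$ around $\mathrm{Id}$ is a harmless constant. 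For the magnetic case, $\|M\|_{C^N}\lesssim\tau_{q+1}^{-N-1}\lambda_q^5$ by \eqref{barRdecay}, and the same chain of inequalities appearing in \eqref{est19} (using \eqref{vqtqest}) shows that all terms are dominated by $\tau_{q+1}^{-3N}$, while the prefactor $\rho_i^{1/2}$ is subsumed for $N\ge 1$. For the velocity case, the extra ingredient is $G^B$ inside $M$; its regularity \eqref{est24} is \emph{better} in the time variable than that of $\mathring{\bar R}_q^u$, so the dominant contribution comes from $\mathring{\bar R}_q^u$ itself, and the single extra factor $\tau_{q+1}^{-1}$ in \eqref{est30} is precisely the factor of $\tau_{q+1}^{-1}$ produced by hitting $\mathring{\bar R}_q^u$ with the lowest-order time derivative (the source of the $-N-1$ exponent in \eqref{barRdecay}).

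The main obstacle is keeping clean track of all the powers of $\tau_{q+1}, \lambda_q, \delta_{q+1}, \varepsilon_R$ in the $C^N$ estimate, and in particular verifying that the worst term in the Faà di Bruno expansion — namely $(\rho_i^{-1}\|M\|_{C^1})^N$ multiplied by $\rho_i^{1/2}$ — still sits under the stated threshold, which ultimately reduces, via \eqref{vqtqest} and the condition $\beta b\ll 1$, to the smallness of $\varepsilon_R$ together with the fact that $\tfrac{4\rho}{\rho-1}$ is large. Once this is done for one term, every other term in the Leibniz expansion is strictly smaller by construction.
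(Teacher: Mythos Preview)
Your approach is essentially the same as the paper's: both decompose $a_\xi$ into the factors $\theta\cdot\rho_i^{1/2}\cdot\chi_i\cdot\gamma_\xi(\cdots)$ and feed in the bounds from Lemma~\ref{lemma1}. Your treatment is in fact more explicit, since the paper abbreviates the $C^N$ estimate to $\|a_\xi\|_{C^N}\lesssim\rho_i^{1/2}\|\chi_i\|_{C^N}$, silently absorbing the $\theta$ and $\gamma_\xi$ factors (which obey the same or better bounds), whereas you spell out the Leibniz/Fa\`a di Bruno structure.

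One point to correct: your explanation of the extra $\tau_{q+1}^{-1}$ in \eqref{est30} versus \eqref{est27} is wrong. The estimate \eqref{barRdecay} gives the \emph{same} $\tau_{q+1}^{-N-1}\lambda_q^5$ bound for $\mathring{\bar R}_q^u$ and $\mathring{\bar R}_q^B$, so the ``lowest-order time derivative'' on $\mathring{\bar R}_q^u$ cannot account for the discrepancy. In the paper the difference arises from how the prefactor $\rho_i^{1/2}$ is handled: in the magnetic case the sharper intermediate form of \eqref{est19} is used so that $\rho_i^{1/2}$ can be absorbed into the $N$th power, yielding $\tau_{q+1}^{-3N}$; in the velocity case the authors instead take the final form \eqref{est17} and bound the prefactor separately via \eqref{est11} as $\rho_i^{1/2}\lesssim(\lambda_q^6\tau_{q+1}^{-1})^{3/11}\le\tau_{q+1}^{-1}$. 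Also, your claim that $G^B$ has \emph{better} $C^N$ regularity than $\mathring{\bar R}_q^u$ is not what \eqref{est24} says---compare it with $\|\mathring{\bar R}_q^u\|_{C^N}\lesssim\tau_{q+1}^{-N-1}\lambda_q^5$; the $G^B$ bound carries an additional large prefactor. None of this affects the validity of your overall strategy, but the bookkeeping in the last paragraph should be redone with this in mind.
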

\begin{proof}
The estimates \eqref{est25} and \eqref{est26} can be deduced by \eqref{est18} and \eqref{est20}. Now we prove \eqref{est27}. By the definition of $\bar{i}_{max}$ in \eqref{ibar} and estimate \eqref{est19}, for $\xi\in \Lambda_{(i)+2}$, we have
\begin{align*}
\|a_\xi\|_{C^N_{t,x}}&\lesssim\rho_i^{\frac{1}{2}}\|\chi_i^B\|_{C^N_{t,x}}\lesssim\left(\lambda_q^{-\varepsilon_R/4}\delta_{q+1}4^{\bar{i}_{max}}\right)^{\frac{1}{2}}
\left(\lambda_q^{\frac{\varepsilon_R}{4}}\delta_{q+1}^{-1}\tau_{q+1}^{-2}\lambda_q^5\right)^N\\
&\lesssim\left(\lambda_q^{-\frac{\varepsilon_R}{4}}\delta_{q+1}\right)^{\frac{5}{22}}\left(\tau_{q+1}^{-1}\lambda_q^5\right)^{\frac{3}{11}}
\left(\lambda_q^{\frac{\varepsilon_R}{4}}\delta_{q+1}^{-1}\tau_{q+1}^{-2}\lambda_q^5\right)^N\\
&\lesssim\left(\lambda_q^{\frac{\varepsilon_R}{4}}\delta_{q+1}^{-1}\lambda_q^{5+\frac{15}{11}}\tau_{q+1}^{-2-\frac{3}{11}}\right)^N\leq\tau_{q+1}^{-3N}.
\end{align*}

For $\xi\in \Lambda_{(i)}$, by the definition of $a_\xi$ in \eqref{axiu} and estimate \eqref{est16}, we infer that
\begin{align*}
\|a_\xi\|_{L^2}\lesssim\rho_i^{\frac{1}{2}}\|\chi_i^u\|_{L^2}\lesssim\left(\lambda_q^{-\varepsilon_R/4}\delta_{q+1}4^{i}\right)^{\frac{1}{2}}2^{-i}\lambda_q^{\varepsilon_R/64}
\leq\lambda_q^{-\varepsilon_R/16}\delta_{q+1}^{1/2},
\end{align*}
then \eqref{est28} holds. For getting \eqref{est29}, by using \eqref{est11} and the definition of $\tau_{q+1}$ in \eqref{tq}, we have
\begin{align}\label{est31}
\|a_\xi\|_{L^\infty}\lesssim\rho_i^{\frac{1}{2}}\lesssim\left(\lambda_q^{-\varepsilon_R/4}\delta_{q+1}4^{i_{max}}\right)^{\frac{1}{2}}
\leq\lambda_q^{\frac{\varepsilon_R}{64}+\frac{3}{11}(6+\frac{4\rho}{\rho-1})}\lambda_q^{-\frac{\varepsilon_R}{8}}\delta_{q+1}^{1/2}
\leq\left(\lambda_q^6\tau_{q+1}^{-1}\right)^{\frac{3}{11}}.
\end{align}
Finally, we prove \eqref{est30}. By the estimate of $\rho_i$ in \eqref{est31} and estimate \eqref{est17}, one has
\begin{align*}
\|a_\xi\|_{C^N_{t,x}}\lesssim\rho_i^{\frac{1}{2}}\|\chi_i^u\|_{C^N_{t,x}}\lesssim\left(\lambda_q^6\tau_{q+1}^{-1}\right)^{\frac{3}{11}}\tau_{q+1}^{-3N}
\leq\tau_{q+1}^{-3N-1}.
\end{align*}
\end{proof}

\begin{Proposition}\label{prop}
The perturbation has the following bounds:
\begin{align}
\|(d_{q+1}^p,w_{q+1}^p)\|_{L^2}&\leq\frac{1}{2}\delta_{q+1}^{\frac{1}{2}},\label{est32}\\
\|(d_{q+1}^p,w_{q+1}^p)\|_{W^{N,p}}&\lesssim\lambda_q^{\frac{\varepsilon_R}{32}}
\tau_{q+1}^{-1}\ell_{\perp}^{\frac{2}{p}-1}\ell_{\|}^{\frac{1}{p}-\frac{1}{2}}\lambda_{q+1}^N,\label{est33}\\
\|(d_{q+1}^c,w_{q+1}^c)\|_{W^{N,p}}&\lesssim\lambda_q^{\frac{\varepsilon_R}{32}}
\tau_{q+1}^{-1}\ell_{\perp}^{\frac{2}{p}-1}\ell_{\|}^{\frac{1}{p}-\frac{1}{2}}\lambda_{q+1}^N\lambda_{q+1}^{-\frac{5}{6}(\frac{5}{4}-\alpha)},\label{est34}\\
\|d_{q+1}^t\|_{W^{N,p}}&\lesssim\lambda_q^{\frac{\varepsilon_R}{32}}
\ell_{\perp}^{\frac{2}{p}-1}\ell_{\|}^{\frac{1}{p}-\frac{1}{2}}\lambda_{q+1}^N\lambda_{q+1}^{-\frac{5}{12}(\frac{5}{4}-\alpha)},\label{est35}\\
\|w_{q+1}^t\|_{W^{N,p}}&\lesssim\lambda_q^{\frac{\varepsilon_R}{32}}
\tau_{q+1}^{-2}\ell_{\perp}^{\frac{2}{p}-1}\ell_{\|}^{\frac{1}{p}-\frac{1}{2}}\lambda_{q+1}^N\lambda_{q+1}^{-\frac{1}{2}(\frac{5}{4}-\alpha)},\label{est36}
\end{align}
for $N\in\mathbb{N}$ and $p>1$.
\end{Proposition}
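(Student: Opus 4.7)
The plan is to prove estimates \eqref{est32}--\eqref{est36} by unpacking the explicit formulas \eqref{dq+1p}--\eqref{dq+1t} and \eqref{wq+1p}--\eqref{wq+1t} for the perturbations, and reducing each estimate to (i) the coefficient bounds on $a_\xi$ from Lemma \ref{lemma2}, (ii) the $L^p$ bounds \eqref{est6}--\eqref{est10} for the intermittent jets, (iii) the disjoint-support property \eqref{disjoint}, and (iv) the cardinality bounds \eqref{est12} on the number of active indices. Throughout, derivatives on $a_\xi$ cost a factor $\tau_{q+1}^{-3}$ by \eqref{est27}, \eqref{est30}, whereas derivatives on $W_\xi,\bar W_\xi$ cost $\lambda_{q+1}$; the parameter hierarchy \eqref{vqtqest} ensures $\tau_{q+1}^{-3}\ll\lambda_{q+1}$, so in every application of Leibniz the dominant term will be the one with all derivatives landing on the high-frequency factor.

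For \eqref{est32}, the mutually disjoint spatial supports in \eqref{disjoint}, together with the disjointness of $\Lambda_{(i)}$ and $\Lambda_{(i)+2}$, give $\|d_{q+1}^p\|_{L^2}^2=\sum_{i,\xi}\|a_\xi\bar W_\xi\|_{L^2}^2$ and an analogous identity for $w_{q+1}^p$. I would then apply the improved H\"older inequality (Lemma 3.7 of \cite{bcv}) to decorrelate the slow factor $a_\xi$ from the high-frequency function $\bar W_\xi$: since $\|\bar W_\xi\|_{L^2}\lesssim 1$ from \eqref{est10} at $p=2$, $N=M=0$, one obtains $\|a_\xi\bar W_\xi\|_{L^2}\lesssim \|a_\xi\|_{L^2}$ plus a negligible error controlled by $\|a_\xi\|_{C^1}\lambda_{q+1}^{-1}$. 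Combining with \eqref{est25} (or \eqref{est28} for the velocity) and the cardinality bound $\bar i_{max}, i_{max}\le \lambda_q^{\varepsilon_R/32}$ yields $\|d_{q+1}^p\|_{L^2}+\|w_{q+1}^p\|_{L^2}\lesssim \lambda_q^{\varepsilon_R/64-\varepsilon_R/16}\delta_{q+1}^{1/2}\le \tfrac12\delta_{q+1}^{1/2}$ for $a$ large enough.

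For \eqref{est33} on the principal parts I apply Leibniz directly to $a_\xi\bar W_\xi$ (respectively $a_\xi W_\xi$), dominate each derivative term by $\|a_\xi\|_{L^\infty}\|\bar W_\xi\|_{W^{N,p}}$ using the parameter hierarchy, and appeal to \eqref{est26}, \eqref{est29} (which combined with \eqref{vqtqest} give $\|a_\xi\|_{L^\infty}\lesssim\tau_{q+1}^{-1}$) together with \eqref{est10}, \eqref{est9}. The sum over the at most $\lambda_q^{\varepsilon_R/32}$ active indices produces the prefactor. For \eqref{est34} I expand the correctors using \eqref{dq+1c}, \eqref{wq+1c}: the main terms are $\nabla\times(\nabla a_\xi\times\bar V_\xi)$ and $\lambda_{q+1}^{-2}\nabla(a_\xi\bar\psi_\xi)\times(\nabla\times(\Phi_\xi\xi))$. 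The bound \eqref{est10} on $\bar V_\xi$ carries an extra factor $\lambda_{q+1}^{-2}$ compared with $\bar W_\xi$, and the price of putting one derivative on $a_\xi$ (or on $\bar\psi_\xi$) rather than on $\bar V_\xi$ is at most $\tau_{q+1}^{-3}/\lambda_{q+1}$; the product is then bounded by $\lambda_{q+1}^{-5/6(5/4-\alpha)}$ thanks to the explicit choice of $\ell_\perp,\ell_\|$ in \eqref{para} and the large-$b$ condition.

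Finally, for \eqref{est35}--\eqref{est36}, I differentiate $a_\xi^2\phi_\xi^2\bar\psi_\xi^2$ and use \eqref{est6}--\eqref{est8} to bound each factor in $L^p$, multiplied by the explicit prefactors $\bar\mu^{-1}$ and $\mu^{-1}$. Because $\|\phi_\xi^2\bar\psi_\xi^2\|_{L^p}\lesssim \ell_\perp^{2/p-1}\ell_\|^{1/p-1/2}$, the multiplier $\bar\mu^{-1}=\lambda_{q+1}^{-5(5\alpha-1/4)/12}\ell_\|\ell_\perp^{-1}$ produces precisely the small factor $\lambda_{q+1}^{-5(5/4-\alpha)/12}$, and the velocity temporal term uses boundedness of $\mathbb P_H\mathbb P_{\neq 0}$ on $L^p$ for $1<p<\infty$. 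The main obstacle I expect is bookkeeping: ensuring that the loss $\tau_{q+1}^{-O(1)}$ from differentiating the coefficients is always strictly dominated by the gains $\lambda_{q+1}^{-2}$ or $\bar\mu^{-1},\mu^{-1}$, which forces careful use of \eqref{vqtqest} and of the explicit hierarchy $\beta b\ll 1$, $\varepsilon_R\ll 1$ throughout. Apart from this bookkeeping, every step is a direct computation of Leibniz plus \eqref{est6}--\eqref{est10} and Lemma \ref{lemma2}, so once the obstacle is cleared the six bounds follow by essentially parallel arguments.
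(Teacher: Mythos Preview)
Your proposal is correct and follows essentially the same route as the paper: both proofs rely on the amplitude bounds of Lemma~\ref{lemma2}, the building-block estimates \eqref{est6}--\eqref{est10}, the cardinality bound \eqref{est12}, the Leibniz rule, and the decorrelation lemma from \cite{bcv} for the $L^2$ estimate (the paper cites it as Lemma~4.5 rather than Lemma~3.7, but it is the same improved H\"older device). The only cosmetic differences are that you invoke the Pythagorean identity from disjoint supports before applying the decorrelation lemma (the paper uses the triangle inequality and absorbs the resulting sum via \eqref{est12}), and your sketch contains a couple of harmless numerical slips (e.g.\ $\|\phi_\xi^2\bar\psi_\xi^2\|_{L^p}\lesssim \ell_\perp^{2/p-2}\ell_\|^{1/p-1}$ rather than $\ell_\perp^{2/p-1}\ell_\|^{1/p-1/2}$, and $\bar\mu^{-1}=\lambda_{q+1}^{-5(5\alpha-1/4)/12}\ell_\perp\ell_\|^{-1}$ rather than $\ell_\|\ell_\perp^{-1}$) that are immediately corrected once you carry out the computations in detail.
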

\begin{proof}
We only give the details of estimates \eqref{est32}--\eqref{est34} for $d_{q+1}$, since those estimates of $w_{q+1}$ can be obtained in a same way. For any $\xi\in \Lambda_{(i)+2}\cup\Lambda_{(i)}$, utilizing \eqref{est25}, \eqref{est27}, \eqref{est28} and \eqref{est30}, we have
\begin{align}\label{est37}
\|D^Na_\xi\|_{L^2}\lesssim\lambda_q^{-\frac{\varepsilon_R}{16}}\delta_{q+1}^{\frac{1}{2}}\tau_{q+1}^{-5N},
\end{align}
for $N\geq0$. Recall that $W_\xi$ and $\bar{W}_{\xi}$ are $(\mathbb{T}/(\lambda_{q+1}\ell_\perp))^3$ periodic. In order to apply \cite[Lemma 4.5]{bcv} with $\lambda=\tau_{q+1}^{-5}$ and $\kappa=\lambda_{q+1}\ell_\perp$, by the definitions \eqref{tq} and \eqref{para}, we compute
\begin{align*}
\lambda\kappa^{-1}&=\tau_{q+1}^{-5}\lambda_{q+1}^{-\frac{25-20\alpha}{24}}=\delta_{q+1}^{-\frac{5}{2}}\lambda_q^{5(\frac{4\rho}{\rho-1}
+\frac{\varepsilon_R}{4})-\frac{5}{24}b(5-4\alpha)}\\
&=\lambda_1^{-\frac{15}{2}\beta}\lambda_q^{5\beta b+5(\frac{4\rho}{\rho-1}+\frac{\varepsilon_R}{4})-\frac{5}{24}b(5-4\alpha)}
\leq\lambda_q^{5\left(\beta b+\frac{4\rho}{\rho-1}+\frac{1}{4}-\frac{1}{24}b(5-4\alpha)\right)},
\end{align*}
where we have used the assumption $\varepsilon_R\ll1$. Here we require that the parameters satisfy
\begin{align}\label{paracon}
\beta<\frac{5-4\alpha}{100},\quad \frac{1}{b}<\frac{5-4\alpha}{100},\quad \frac{4\rho}{\rho-1}<\frac{b}{50}(5-4\alpha),
\end{align}
then one has $\lambda\kappa^{-1}\ll1$. By using \cite[Lemma 4.5]{bcv} with $C_f=C\lambda_q^{-\frac{\varepsilon_R}{16}}\delta_{q+1}^{\frac{1}{2}}$ and \eqref{est10}, we conclude that
\begin{align*}
\|d_{q+1}^p\|_{L^2}\lesssim\lambda_q^{\frac{\varepsilon_R}{32}}\lambda_q^{-\frac{\varepsilon_R}{16}}\delta_{q+1}^{\frac{1}{2}}\|\bar{W}_{\xi}\|_{L^2}
\leq\frac{1}{2}\delta_{q+1}^{\frac{1}{2}}.
\end{align*}
Now we prove \eqref{est33}. Employing Leibniz's rule, \eqref{est10} and \eqref{est27}, we have
\begin{align*}
\|d_{q+1}^p\|_{W^{N,p}}&\lesssim\sum_{i}\sum_{\xi\in\Lambda_{(i)+2}}\sum_{N'=0}^N\|a_\xi\|_{C^{N-N'}}\|\bar{W}_\xi\|_{W^{N',p}}\\
&\lesssim\lambda_q^{\frac{\varepsilon_R}{32}}\sum_{N'=0}^N\tau_{q+1}^{-3(N-N')-1}\ell_\perp^{\frac{2}{p}-1}\ell_{\|}^{\frac{1}{p}-\frac{1}{2}}\lambda_{q+1}^{N'}\\
&\lesssim\lambda_q^{\frac{\varepsilon_R}{32}}\tau_{q+1}^{-1}\ell_\perp^{\frac{2}{p}-1}\ell_{\|}^{\frac{1}{p}-\frac{1}{2}}\lambda_{q+1}^{N}.
\end{align*}
Next, we prove \eqref{est34}. By Leibniz's rule, Fubini's theorem, \eqref{est7}, \eqref{est8}, \eqref{est10} and \eqref{est27}, one has{\small
\begin{align*}
\|d_{q+1}^c\|_{W^{N,p}}&\lesssim\sum_{i}\sum_{\xi\in\Lambda_{(i)+2}}\sum_{N'=0}^{N+1}\|a_\xi\|_{C^{N-N'+2}}\|\bar{V}_\xi\|_{W^{N',p}}\\
&+\sum_{i}\sum_{\xi\in\Lambda_{(i)+2}}\sum_{N'=0}^{N}\sum_{N''=0}^{N-N'+1}\lambda_{q+1}^{-2}\|a_\xi\|_{C^{N-N'-N''+1}}\|\bar{\psi}_\xi\|_{W^{N'',p}}\|\Phi_\xi\|_{W^{N'+1,p}}\\
&\lesssim\lambda_q^{\frac{\varepsilon_R}{32}}\sum_{N'=0}^{N+1}\tau_{q+1}^{-3(N-N'+2)-1}\ell_\perp^{\frac{2}{p}-1}\ell_{\|}^{\frac{1}{p}-\frac{1}{2}}\lambda_{q+1}^{N'-2}\\
&+\lambda_q^{\frac{\varepsilon_R}{32}}\sum_{N'=0}^{N}\sum_{N''=0}^{N-N'+1}\lambda_{q+1}^{-2}\tau_{q+1}^{-3(N-N'-N''+1)-1}\ell_{\|}^{\frac{1}{p}-\frac{1}{2}}
(\ell_\perp\ell_{\|}^{-1}\lambda_{q+1})^{N''}\ell_\perp^{\frac{2}{p}-1}\lambda_{q+1}^{N'+1}\\
&\lesssim\lambda_q^{\frac{\varepsilon_R}{32}}\ell_\perp^{\frac{2}{p}-1}\ell_{\|}^{\frac{1}{p}-\frac{1}{2}}
\left(\tau_{q+1}^{-4}\lambda_{q+1}^{N-1}+\tau_{q+1}^{-1}\lambda_{q+1}^{N}\ell_\perp\ell_{\|}^{-1}\right)\\
&\lesssim\lambda_q^{\frac{\varepsilon_R}{32}}
\tau_{q+1}^{-1}\ell_{\perp}^{\frac{2}{p}-1}\ell_{\|}^{\frac{1}{p}-\frac{1}{2}}\lambda_{q+1}^N\lambda_{q+1}^{-\frac{5}{6}(\frac{5}{4}-\alpha)}.
\end{align*}}
\!\!By the definition of $d_{q+1}^t$ in \eqref{dq+1t} and $\bar{\mu}$ in \eqref{para}, Leibniz's rule, Fubini's theorem, \eqref{est7}, \eqref{est8}, and \eqref{est27}, we infer that
\begin{align*}
\|d_{q+1}^t\|_{W^{N,p}}&\lesssim\lambda_q^{\frac{\varepsilon_R}{32}}\bar{\mu}^{-1}\sum_{N'=0}^{N+1}\sum_{N''=0}^{N'}\|a^2_\xi\|_{C^{N-N'+1}}
\|\phi_\xi^2\|_{W^{N'-N'',p}}\|\bar{\psi}_\xi^2\|_{W^{N'',p}}\\
&\lesssim\lambda_q^{\frac{\varepsilon_R}{32}}\bar{\mu}^{-1}\sum_{N'=0}^{N+1}\sum_{N''=0}^{N'}\tau_{q+1}^{-3(N-N'+1)}\ell_\perp^{\frac{2}{p}-2}\lambda_{q+1}^{N'-N''}
\ell_{\|}^{\frac{1}{p}-1}(\ell_\perp\ell_{\|}^{-1}\lambda_{q+1})^{N''}\\
&\lesssim\lambda_q^{\frac{\varepsilon_R}{32}}\ell_{\perp}^{\frac{2}{p}-1}\ell_{\|}^{\frac{1}{p}-\frac{1}{2}}\ell_{\perp}^{-1}\ell_{\|}^{-\frac{1}{2}}
\bar{\mu}^{-1}\lambda_{q+1}^{N+1}\\
&\lesssim\lambda_q^{\frac{\varepsilon_R}{32}}\ell_{\perp}^{\frac{2}{p}-1}\ell_{\|}^{\frac{1}{p}-\frac{1}{2}}\lambda_{q+1}^N
\lambda_{q+1}^{-\frac{5}{12}(\frac{5}{4}-\alpha)}.
\end{align*}
Similarly, we can prove \eqref{est36} as
\begin{align*}
\|w_{q+1}^t\|_{W^{N,p}}&\lesssim\lambda_q^{\frac{\varepsilon_R}{32}}(\mu^{-1}+\bar{\mu}^{-1})\sum_{N'=0}^{N}\sum_{N''=0}^{N'}\|a^2_\xi\|_{C^{N-N'}}
\|\phi_\xi^2\|_{W^{N'-N'',p}}\left(\|\psi_\xi^2\|_{W^{N'',p}}+\|\bar{\psi}_\xi^2\|_{W^{N'',p}}\right)\\
&\lesssim\lambda_q^{\frac{\varepsilon_R}{32}}\mu^{-1}\sum_{N'=0}^{N}\sum_{N''=0}^{N'}\tau_{q+1}^{-3(N-N')-2}\ell_\perp^{\frac{2}{p}-2}\lambda_{q+1}^{N'-N''}
\ell_{\|}^{\frac{1}{p}-1}(\ell_\perp\ell_{\|}^{-1}\lambda_{q+1})^{N''}\\
&\lesssim\lambda_q^{\frac{\varepsilon_R}{32}}\tau_{q+1}^{-2}\ell_{\perp}^{\frac{2}{p}-1}\ell_{\|}^{\frac{1}{p}-\frac{1}{2}}\ell_{\perp}^{-1}\ell_{\|}^{-\frac{1}{2}}
\mu^{-1}\lambda_{q+1}^{N}\\
&\lesssim\lambda_q^{\frac{\varepsilon_R}{32}}\tau_{q+1}^{-2}\ell_{\perp}^{\frac{2}{p}-1}\ell_{\|}^{\frac{1}{p}-\frac{1}{2}}\lambda_{q+1}^{N}
\lambda_{q+1}^{-\frac{1}{2}(\frac{5}{4}-\alpha)}.
\end{align*}
\end{proof}

\begin{Proposition}\label{prop1}
For $s\geq0$ and $1<p\leq+\infty$, there holds
\begin{align}
\|(d_{q+1},w_{q+1})\|_{L^2}&\leq\frac{3}{4}\delta_{q+1}^{\frac{1}{2}},\label{est38}\\
\|(B_{q+1}-B_q,u_{q+1}-u_q)\|_{L^2}&\leq\delta_{q+1}^{\frac{1}{2}},\label{est39}\\
\|(d_{q+1},w_{q+1})\|_{W^{s,p}}&\leq\lambda_q^{\frac{\varepsilon_R}{32}}\tau_{q+1}^{-1}\ell_{\perp}^{\frac{2}{p}-1}\ell_{\|}^{\frac{1}{p}-\frac{1}{2}}\lambda_{q+1}^{s},\label{est40}\\
\|\partial_td_{q+1}\|_{H^2}&\leq\lambda_{q+1}^6,\label{est41}\\
\|\partial_tw_{q+1}\|_{H^3}&\leq\lambda_{q+1}^6.\label{est42}
\end{align}
\end{Proposition}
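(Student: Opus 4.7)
The plan is to deduce each of the five inequalities from Proposition \ref{prop}, Proposition \ref{glue prop}, Lemma \ref{lemma2}, and the explicit formulas \eqref{dq+1p}--\eqref{dq+1t}, \eqref{wq+1p}--\eqref{wq+1t}. The overarching principle is that the principal parts $d_{q+1}^p$, $w_{q+1}^p$ already realize the desired $\delta_{q+1}^{1/2}$ size, while the correctors and the temporal terms each carry an extra negative power of $\lambda_{q+1}$ coming from the factors $\lambda_{q+1}^{-\frac{5}{6}(\frac{5}{4}-\alpha)}$, $\lambda_{q+1}^{-\frac{5}{12}(\frac{5}{4}-\alpha)}$, and $\lambda_{q+1}^{-\frac{1}{2}(\frac{5}{4}-\alpha)}$. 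Under the parameter inequalities \eqref{paracon}, together with $\varepsilon_R \ll 1$ and $\beta b \ll 1$, every nuisance factor of the form $\lambda_q^{\varepsilon_R/32}\tau_{q+1}^{-k}$ is a negligible positive power of $\lambda_{q+1}$ that is swallowed by these gains once $a$ is large enough.

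For \eqref{est38} I decompose $d_{q+1}=d_{q+1}^p+d_{q+1}^c+d_{q+1}^t$ and apply \eqref{est32}, \eqref{est34}, \eqref{est35} with $N=0$, $p=2$; the $L^2$ norms of $d_{q+1}^c$ and $d_{q+1}^t$ are $o(\delta_{q+1}^{1/2})$ by the gain above, so $\|d_{q+1}\|_{L^2}\leq \tfrac12\delta_{q+1}^{1/2}+o(\delta_{q+1}^{1/2})\leq \tfrac34\delta_{q+1}^{1/2}$, and the identical argument using \eqref{est36} for $w_{q+1}^t$ handles $w_{q+1}$. Estimate \eqref{est39} then follows instantly from \eqref{barcha} and the triangle inequality: $\|B_{q+1}-B_q\|_{L^2}\leq \|\bar B_q-B_q\|_{L^2}+\|d_{q+1}\|_{L^2}\leq \lambda_q^{-15}\delta_{q+1}^{1/2}+\tfrac34\delta_{q+1}^{1/2}\leq \delta_{q+1}^{1/2}$, and analogously for $u_{q+1}-u_q$. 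For \eqref{est40} I first treat integer $s$ by simply adding \eqref{est33}, \eqref{est34}, \eqref{est35} (and their counterparts for $w_{q+1}$), noting that \eqref{est33} dominates the other two; non-integer $s$ is obtained by complex interpolation between consecutive integers. The $\tau_{q+1}^{-2}$ in \eqref{est36} versus $\tau_{q+1}^{-1}$ in \eqref{est33} is compensated by the extra $\lambda_{q+1}^{-\frac12(\frac54-\alpha)}$ gain of $w_{q+1}^t$.

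For the time derivatives \eqref{est41}--\eqref{est42} I differentiate the explicit formulas and collect terms. Every time derivative of the coefficients $a_\xi$ costs a factor $\tau_{q+1}^{-1}$ by Lemma \ref{lemma2}, and every time derivative of $\bar\psi_\xi$ (respectively $\psi_\xi$) equals $\bar\mu$ (respectively $\mu$) times a spatial $\xi$-derivative, hence costs $\bar\mu\ell_\|^{-1}\ell_\perp\lambda_{q+1}$ (respectively the same with $\mu$); these are bounded from \eqref{est6}--\eqref{est10}. Combining these with the $H^2$ (resp.\ $H^3$) spatial bounds on $\bar W_\xi$, $\bar V_\xi$, $\phi_\xi^2\bar\psi_\xi^2\xi$ (resp.\ on $W_\xi$, $V_\xi$, $\mathbb P_H\mathbb P_{\neq 0}(a_\xi^2\phi_\xi^2\psi_\xi^2\xi)$) from Proposition \ref{prop}, each summand is controlled by a polynomial in $\lambda_{q+1}$. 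Using \eqref{para}, a direct exponent count shows $\bar\mu\lesssim \lambda_{q+1}^{(20\alpha+15)/16}\leq \lambda_{q+1}^{5/2}$ and that the worst scaling in $\partial_t d_{q+1}$ in $H^2$ is strictly below $\lambda_{q+1}^6$, with the same holding in $H^3$ for $\partial_t w_{q+1}$ once one accounts for the extra $\mu^{-1}$ in \eqref{wq+1t}.

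The main obstacle here is the third paragraph: controlling $\partial_t d_{q+1}^t$ in $H^2$ is tight because $\bar\mu$ is chosen large precisely so that $d_{q+1}^t$ itself is small, and this largeness reappears in $\partial_t d_{q+1}^t$. I expect the bound to go through because the explicit choices \eqref{para}, combined with $\ell_\|\ell_\perp^{-1}=\lambda_{q+1}^{(25-20\alpha)/24}$, yield an overall exponent that is bounded by $6$ with room to spare for the polynomially-in-$\tau_{q+1}$ prefactors; any of the more delicate $\tau_{q+1}^{-k}$ losses from differentiating $a_\xi$ are harmlessly absorbed since $\tau_{q+1}^{-1}\leq \lambda_{q-1}^{4\rho/(\rho-1)+\varepsilon_R/4}$ is only a low power of $\lambda_{q+1}$.
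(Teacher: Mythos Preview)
Your proposal is correct and follows essentially the same route as the paper: \eqref{est38} from Proposition \ref{prop} plus the observation that the corrector and temporal parts are $o(\delta_{q+1}^{1/2})$; \eqref{est39} from \eqref{barcha} and the triangle inequality; \eqref{est40} from summing \eqref{est33}--\eqref{est36} and interpolating; and \eqref{est41}--\eqref{est42} by differentiating the explicit formulas and counting exponents via \eqref{est6}--\eqref{est10} and \eqref{para}. The only cosmetic difference is that the paper handles $\partial_t(d_{q+1}^p+d_{q+1}^c)$ and $\partial_t(w_{q+1}^p+w_{q+1}^c)$ in one stroke using the curl--curl identities \eqref{eq17}--\eqref{eq18}, which avoids tracking the pieces separately; also note that a time derivative on $a_\xi$ costs $\tau_{q+1}^{-3}$ (or $\tau_{q+1}^{-4}$) by Lemma \ref{lemma2}, not $\tau_{q+1}^{-1}$, though as you say this is absorbed either way.
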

\begin{proof}
The estimate \eqref{est38} follows from Proposition \ref{prop} directly. By the definition of $B_{q+1}$ in \eqref{Bq+1}, \eqref{barcha} and \eqref{est38}, we have
\begin{align*}
\|B_{q+1}-B_q\|_{L^2}\leq\|\bar{B}_{q}+d_{q+1}-\bar{B}_q\|_{L^2}+\|\bar{B}_{q}-B_q\|_{L^2}\leq\frac{3}{4}\delta_{q+1}^{\frac{1}{2}}
+\frac{1}{4}\delta_{q+1}^{\frac{1}{2}}\leq\delta_{q+1}^{\frac{1}{2}},
\end{align*}
and $\|u_{q+1}-u_q\|_{L^2}$ have the same estimate. The estimate \eqref{est40} can be deduced by \eqref{est33}--\eqref{est36} and interpolation inequality. Now we prove \eqref{est41}. By \eqref{eq17}, Leibniz's rule and \eqref{est10}, we find that
\begin{align}\label{est43}
\|\partial_t(d_{q+1}^p+d_{q+1}^c)\|_{H^2}&=\|\sum_{i\geq0}\sum_{\xi\in\Lambda_{(i)+2}}\partial_t\nabla\times\nabla\times\left(a_\xi\bar{V}_\xi\right)\|_{H^2}\notag\\
&\lesssim\lambda_q^{\frac{\varepsilon_R}{32}}\lambda_{q+1}^2\left(\ell_\perp\ell_\|\bar{\mu}\lambda_{q+1}\right)\leq\lambda_{q+1}^6.
\end{align}
For the term $\partial_td_{q+1}^t$, by the definition \eqref{dq+1t}, \eqref{est7}, \eqref{est8} and \eqref{est27}, we have
\begin{align}\label{est44}
\|\partial_td_{q+1}^t\|_{H^2}&\lesssim\lambda_q^{\frac{\varepsilon_R}{32}}\bar{\mu}^{-1}\|\partial_t(a_\xi^2\phi^2_\xi\bar{\psi}^2_\xi\xi)\|_{H^3}\notag\\
&\lesssim\lambda_q^{\frac{\varepsilon_R}{32}}\bar{\mu}^{-1}\sum_{N'=0}^3\sum_{N''=0}^{N'}\|\partial_ta_\xi^2\|_{C^{3-N'}}\|\phi^2_\xi\|_{H^{N'-N''}}
\|\bar{\psi}^2_\xi\|_{H^{N''}}\notag\\
&+\|a_\xi^2\|_{C^{3-N'}}\|\phi^2_\xi\|_{H^{N'-N''}}\|\partial_t\bar{\psi}^2_\xi\|_{H^{N''}}\notag\\
&\lesssim\lambda_q^{\frac{\varepsilon_R}{32}}\bar{\mu}^{-1}\sum_{N'=0}^3\sum_{N''=0}^{N'}\tau_{q+1}^{-3(4-N')}\ell_\perp^{-1}\lambda_{q+1}^{N'-N''}\ell_\|^{-\frac{1}{2}}
(\ell_\perp\ell_\|^{-1}\lambda_{q+1})^{N''}\notag\\
&+\tau_{q+1}^{-3(3-N')}\ell_\perp^{-1}\lambda_{q+1}^{N'-N''}\ell_\|^{-\frac{1}{2}}
(\ell_\perp\ell_\|^{-1}\lambda_{q+1})^{N''}\ell_\perp\ell_\|^{-1}\lambda_{q+1}\bar{\mu}\notag\\
&\lesssim\lambda_q^{\frac{\varepsilon_R}{32}}\bar{\mu}^{-1}\ell_\perp^{-1}\ell_\|^{-\frac{1}{2}}(\tau_{q+1}^{-3}\lambda_{q+1}^{3}+\lambda_{q+1}^4\bar{\mu}\ell_\perp\ell_\|^{-1})
\leq\lambda_{q+1}^6.
\end{align}
Then combining \eqref{est43} and \eqref{est44}, we conclude \eqref{est41}. Next, we prove \eqref{est42}.
Similar to \eqref{est43}, by using \eqref{eq18}, we have
\begin{align}\label{est45}
\|\partial_t(w_{q+1}^p+w_{q+1}^c)\|_{H^3}&\lesssim\lambda_q^{\frac{\varepsilon_R}{32}}\|\partial_t\left(a_\xi V_\xi\right)\|_{H^5}\lesssim\lambda_q^{\frac{\varepsilon_R}{32}}\lambda_{q+1}^3\left(\ell_\perp\ell_\|^{-1}\mu\lambda_{q+1}\right)\notag\\
&\lesssim\lambda_q^{\frac{\varepsilon_R}{32}}\lambda_{q+1}^{3+2\alpha}\leq\lambda_{q+1}^6.
\end{align}
By the definition of $w_{q+1}^t$ in \eqref{wq+1t}, one has
\begin{align}\label{est46}
\|\partial_tw_{q+1}^t\|_{H^3}\lesssim\lambda_q^{\frac{\varepsilon_R}{32}}\mu^{-1}\|\partial_t(a^2_\xi\phi^2_\xi\psi^2_\xi)\|_{H^3}
+\lambda_q^{\frac{\varepsilon_R}{32}}\bar{\mu}^{-1}\|\partial_t(a^2_\xi\phi^2_\xi\bar{\psi}^2_\xi)\|_{H^3}.
\end{align}
Since the estimate of the second term in \eqref{est46} is same to \eqref{est44}, we only give the estimate of the first term. Similar to \eqref{est44}, by \eqref{est6}, \eqref{est8} and  \eqref{est30}, we have
\begin{align}\label{est47}
&\lambda_q^{\frac{\varepsilon_R}{32}}\mu^{-1}\|\partial_t(a^2_\xi\phi^2_\xi\psi^2_\xi)\|_{H^3}\notag\\
&\lesssim\lambda_q^{\frac{\varepsilon_R}{32}}\mu^{-1}\ell_\perp^{-1}\ell_\|^{-\frac{1}{2}}(\tau_{q+1}^{-5}\lambda_{q+1}^{3}+\tau_{q+1}^{-2}\lambda_{q+1}^{3+2\alpha})
\leq\lambda_{q+1}^6.
\end{align}
Thus, estimates \eqref{est45}--\eqref{est47} yields \eqref{est42}.
\end{proof}
As a direct consequence of estimates \eqref{est38} and \eqref{barL2}, one has
\begin{align*}
\|u_{q+1}\|_{L^2}\leq\|\bar{u}_{q}\|_{L^2}+\|w_{q+1}\|_{L^2}\leq2\delta_0^{\frac{1}{2}}-\delta_q^{\frac{1}{2}}+\frac{3}{4}\delta_{q+1}^{\frac{1}{2}}
\leq2\delta_0^{\frac{1}{2}}-\delta_{q+1}^{\frac{1}{2}},
\end{align*}
where we have used the fact that $2\lambda_q^\beta\leq\lambda_{q+1}^\beta$. Similarly, we have $\|B_{q+1}\|_{L^2}\leq2\delta_0^{\frac{1}{2}}-\delta_{q+1}^{\frac{1}{2}}$, then we conclude the inductive estimate \eqref{uBestL2} at the level $q+1$. Estimates \eqref{est40} and \eqref{barH4} yields that
$$\|u_{q+1}\|_{H^4}\leq\|\bar{u}_{q}\|_{H^4}+\|w_{q+1}\|_{H^4}\lesssim\lambda_q^5+\lambda_q^{\frac{\varepsilon_R}{32}}\tau_{q+1}^{-1}\lambda_{q+1}^4\leq\lambda_{q+1}^5,$$
where we have used the constraints \eqref{paracon} on parameters. Similarly, one has $\|B_{q+1}\|_{H^4}\leq\lambda_{q+1}^5$, thus we conclude the inductive estimate \eqref{uBestH4} at the level of $q+1$.

Next, using Proposition \ref{prop1}, we shall establish serval bounds which will be used in Section \ref{sec5}. Applying \eqref{bardecay} and \eqref{est40}, we have
\begin{align}
\|u_{q+1}\|_{L^\infty(T/3,2T/3;H^{11/2})}&\leq\|\bar{u}_{q}\|_{L^\infty(T/3,2T/3;H^{11/2})}+\|w_{q+1}\|_{H^{11/2}}\notag\\
&\lesssim\tau_{q+1}^{-\frac{3}{2}}\lambda_q^5+\lambda_q^{\frac{\varepsilon_R}{32}}\tau_{q+1}^{-1}\lambda_{q+1}^{\frac{11}{2}}\leq\lambda_{q+1}^6,\label{est48}\\
\|B_{q+1}\|_{L^\infty(T/3,2T/3;H^{9/2})}&\leq\|\bar{B}_{q}\|_{L^\infty(T/3,2T/3;H^{9/2})}+\|d_{q+1}\|_{H^{9/2}}\notag\\
&\lesssim\tau_{q+1}^{-\frac{1}{2}}\lambda_q^5+\lambda_q^{\frac{\varepsilon_R}{32}}\tau_{q+1}^{-1}\lambda_{q+1}^{\frac{9}{2}}\leq\lambda_{q+1}^5.\label{est49}
\end{align}
Similarly, by \eqref{bardecay}, \eqref{est41} and \eqref{est42}, we get
\begin{align}
\|\partial_tu_{q+1}\|_{H^3}\leq\|\partial_t\bar{u}_{q}\|_{H^3}+\|\partial_tw_{q+1}\|_{H^3}\leq\tau_{q+1}^{-1}\lambda_{q}^5+\lambda_{q+1}^6\leq2\lambda_{q+1}^6,\label{est50}\\
\|\partial_tB_{q+1}\|_{H^2}\leq\|\partial_t\bar{B}_{q}\|_{H^2}+\|\partial_td_{q+1}\|_{H^2}\leq\tau_{q+1}^{-1}\lambda_{q}^5+\lambda_{q+1}^6\leq2\lambda_{q+1}^6.\label{est51}
\end{align}

\section{Convex integration step: the Reynolds stress}\label{sec5}
In this section, we will prove the following proposition:
\begin{Proposition}\label{prop2}
There exists an $\varepsilon_R>0$ sufficiently small, and a parameter $p>0$ sufficiently close to $1$, depending only on $\beta,\alpha,b$ and $\rho$, such that the following statement holds: There exists traceless symmetric 2-tensor $\tilde{R}^u$ and $\mathring{R}_{q+1}^B$, and a scalar pressure field $\tilde{p}$, defined implicitly in \eqref{eq19} below, satisfying:
\begin{equation}
\begin{cases}
\partial_tu_{q+1}+(-\Delta)^\alpha u_{q+1}+{\rm div}(u_{q+1}\otimes u_{q+1}-B_{q+1}\otimes B_{q+1})+\nabla\tilde{p}={\rm div}\tilde{R}^u,\\
\partial_tB_{q+1}+(-\Delta)^\alpha B_{q+1}+{\rm div}(u_{q+1}\otimes B_{q+1}-B_{q+1}\otimes u_{q+1})\\
\quad\quad+\nabla\times{\rm div}(B_{q+1}\otimes B_{q+1})=\nabla\times{\rm div}\mathring{R}_{q+1}^B,\\
{\rm div}u_{q+1}={\rm div}B_{q+1}=0.
\end{cases}
\end{equation}
Furthermore, $(\tilde{R}^u,\mathring{R}_{q+1}^B)$ has bounds
\begin{align}\label{est52}
\|(\tilde{R}^u,\mathring{R}_{q+1}^B)\|_{L^p}\lesssim\lambda_{q+1}^{-2\varepsilon_R}\delta_{q+2},
\end{align}
where the constant depends on the choice of $p$ and $\varepsilon_R$, but independent of $q$. And $(\tilde{R}^u,\mathring{R}_{q+1}^B)$ has the support property:
\begin{align}\label{supp}
{\rm supp}(\tilde{R}^u,\mathring{R}_{q+1}^B)\subset\mathbb{T}^3\times\left\{t\in[0,T]: {\rm dist}(t,\mathscr{G}^{(q+1)})>\tau_{q+1}\right\}.
\end{align}
\end{Proposition}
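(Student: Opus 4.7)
The plan is to compute the error produced by substituting $u_{q+1}=\bar{u}_q+w_{q+1}$ and $B_{q+1}=\bar{B}_q+d_{q+1}$ into the Hall--MHD system \eqref{Hall-MHD1}, subtract the equations \eqref{Hall-MHD2} solved by the glued quadruple $(\bar{u}_q,\bar{B}_q,\mathring{\bar{R}}_q^u,\mathring{\bar{R}}_q^B)$ from Proposition \ref{glue prop}, and then invert the divergences using $\mathcal{R}$ and $\mathcal{R}\,{\rm curl}^{-1}$ to read off $\tilde{R}^u$ and $\mathring{R}_{q+1}^B$. Following the standard convex-integration bookkeeping, I would split each of the two new stresses into four pieces: a linear error containing $(-\Delta)^{\alpha}w_{q+1}$, $\partial_t(w_{q+1}^p+w_{q+1}^c)$ and their magnetic analogues; a transport/Nash error ${\rm div}(\bar{u}_q\otimes w_{q+1}+w_{q+1}\otimes\bar{u}_q-\bar{B}_q\otimes d_{q+1}-d_{q+1}\otimes\bar{B}_q)$ together with its magnetic counterpart; a corrector error gathering every quadratic product in which at least one factor is one of $w_{q+1}^{c},w_{q+1}^{t},d_{q+1}^{c},d_{q+1}^{t}$; and an oscillation error comparing the low-frequency part of the principal quadratic interactions with the old Reynolds stresses.

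The heart of the argument, which must be arranged first so that every other piece is tuned to absorb it, is the oscillation cancellation. By the algebraic identities \eqref{eq14} and \eqref{eq16}, the coefficients $a_\xi$ are engineered so that the space-means $\fint_{\mathbb{T}^3}W_\xi\otimes W_\xi\,dx$ and $\fint_{\mathbb{T}^3}\bar{W}_\xi\otimes\bar{W}_\xi\,dx$ exactly cancel $\mathring{\bar{R}}_q^u-G^B$ and $\mathring{\bar{R}}_q^B$ respectively. The remaining non-mean contributions of ${\rm div}(w_{q+1}^p\otimes w_{q+1}^p-d_{q+1}^p\otimes d_{q+1}^p)$ and $\nabla\times{\rm div}(d_{q+1}^p\otimes d_{q+1}^p)$ are then rewritten, via the identities \eqref{eq11}, \eqref{eq12} together with $(\xi\cdot\nabla)\psi_\xi=\mu^{-1}\partial_t\psi_\xi$ and $(\xi\cdot\nabla)\bar\psi_\xi=\bar\mu^{-1}\partial_t\bar\psi_\xi$, as time derivatives that are cancelled by $\partial_t w_{q+1}^{t}$ and $\partial_t d_{q+1}^{t}$ up to a pressure gradient. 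A second crucial cancellation is available in the magnetic equation: the potentially troublesome skew interaction $w_{q+1}^p\otimes d_{q+1}^p-d_{q+1}^p\otimes w_{q+1}^p$ vanishes pointwise because of the disjoint-support arrangement $\Lambda_{(i)}\cap\Lambda_{(i)+2}=\emptyset$ recorded in \eqref{disjoint}, so that no Maxwell-type remainder survives in $\mathring{R}_{q+1}^B$. After these manipulations every surviving summand is supported at spatial frequency $\gtrsim\lambda_{q+1}$ and can be placed in divergence form by one application of $\mathcal{R}$ or $\mathcal{R}\,{\rm curl}^{-1}$.

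Only then would I run the $L^p$ estimates. Fixing $p>1$ sufficiently close to $1$ and using the building-block bounds of Proposition \ref{prop} together with \eqref{bardecay}, I would treat each summand separately: the linear error uses the smoothing of the fractional dissipation together with \eqref{est50}--\eqref{est51}; the corrector error exploits the decisive gains $\lambda_{q+1}^{-\frac{5}{6}(5/4-\alpha)}$, $\lambda_{q+1}^{-\frac{5}{12}(5/4-\alpha)}$ and $\lambda_{q+1}^{-\frac{1}{2}(5/4-\alpha)}$ from \eqref{est34}--\eqref{est36}; and the oscillation residues exploit both the gain $\lambda_{q+1}^{-1}$ coming from $\mathcal{R}$ and the intermittent profile sizes $\ell_\perp^{2/p-1}\ell_\|^{1/p-1/2}$, which produce powers of $\lambda_{q+1}$ with a strictly negative exponent once $p$ is close enough to $1$ and the parameter constraints \eqref{paracon} are in force. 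The support property \eqref{supp} is automatic, since every summand of $w_{q+1}$ and $d_{q+1}$ vanishes whenever ${\rm dist}(t,\mathscr{G}^{(q+1)})\leq\tau_{q+1}$ by construction of $\theta_u,\theta_B$, and the linear and transport errors inherit the same support through \eqref{bar supp}.

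The step I expect to be the main obstacle, and the genuinely new technical point compared with \cite{bcv,lzz}, is controlling the contribution $\mathcal{R}\,{\rm curl}^{-1}\partial_t(d_{q+1}^p+d_{q+1}^c)$ inside the magnetic linear error. Because the Hall nonlinearity forces $d_{q+1}^t$ in \eqref{dq+1t} to carry one extra curl relative to the NSE/MHD temporal corrector \eqref{wq+1t}, the only way to keep $d_{q+1}^t$ comparable with $w_{q+1}^t$ (compare \eqref{est35} with \eqref{est36}) is to take $\bar\mu\gg\mu$ as in \eqref{para}. On the other hand, $\bar\mu$ also governs the size of $\partial_t d_{q+1}^{p}$ and $\partial_t d_{q+1}^{c}$, which enter the magnetic linear error with only a single factor of $\mathcal{R}\,{\rm curl}^{-1}$ in front, so $\bar\mu$ cannot be pushed too high either. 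Verifying that the specific choice in \eqref{para} sits inside the admissible window for every $\alpha\in[\rho,5/4)$, so that the parameter inequalities \eqref{paracon} close up with the final smallness $\lambda_{q+1}^{-2\varepsilon_R}\delta_{q+2}$, is the delicate part; once that balance is established, the remaining estimates adapt the pattern of \cite{lzz,bcv} in a routine way.
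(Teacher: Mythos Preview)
Your proposal is correct and follows essentially the same approach as the paper: the same decomposition into linear, corrector, and oscillation errors (the paper merges your ``linear'' and ``transport/Nash'' into a single linear error), the same oscillation cancellations via \eqref{eq14}, \eqref{eq16}, \eqref{eq11}, \eqref{eq12}, the same use of $w_{q+1}^p\otimes d_{q+1}^p=0$ from \eqref{disjoint}, and the same identification of the $\bar\mu$ balance in $\mathcal{R}\,{\rm curl}^{-1}\partial_t(d_{q+1}^p+d_{q+1}^c)$ as the critical constraint. One minor slip: the $L^p$ linear-error bound does not use \eqref{est50}--\eqref{est51} (those feed only into the $H^4$ estimate in the subsequent Corollary); instead it relies directly on the building-block bounds \eqref{est9}, \eqref{est10} and the amplitude estimates of Lemma~\ref{lemma2}, together with the fact that $\mathcal{R}\,{\rm curl}^{-1}\nabla\times\nabla\times$ is a bounded Calder\'on--Zygmund operator.
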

\begin{proof}
By \eqref{dq+1}, \eqref{Bq+1}, \eqref{wq+1}, \eqref{uq+1}, and the fact that $(\bar{u}_q,\bar{B}_q,\mathring{\bar{R}}_{q}^u,\mathring{\bar{R}}_{q}^B)$ satisfies equation \eqref{Hall-MHD2}, we obtain\small{
\begin{align}\label{eq19}
&{\rm div}\tilde{R}^u-\nabla\tilde{p}\notag\\
&=(-\Delta)^\alpha w_{q+1}+\partial_t(w_{q+1}^p+w_{q+1}^c)+{\rm div}(\bar{u}_{q}\otimes w_{q+1}+w_{q+1}\otimes \bar{u}_{q}-\bar{B}_{q}\otimes d_{q+1}-d_{q+1}\otimes \bar{B}_{q})\notag\\
&\quad+{\rm div}((w_{q+1}^c+w_{q+1}^t)\otimes w_{q+1}+w_{q+1}^p\otimes(w_{q+1}^c+w_{q+1}^t))\notag\\
&\quad\quad\quad-{\rm div}((d_{q+1}^c+d_{q+1}^t)\otimes d_{q+1}+d_{q+1}^p\otimes(d_{q+1}^c+d_{q+1}^t))\notag\\
&\quad+{\rm div}(\mathring{\bar{R}}_{q}^u+w_{q+1}^p\otimes w_{q+1}^p-d_{q+1}^p\otimes d_{q+1}^p)+\partial_tw_{q+1}^t\notag\\
&=:{\rm div}(\tilde{R}^u_{linear}+\tilde{R}^u_{corrector}+\tilde{R}^u_{oscillation})+\nabla p,
\end{align}}
\!\!where the linear error is defined by applying $\mathcal{R}$ to the first line of \eqref{eq19}, while the corrector error is defined by applying $\mathcal{R}$ to the second and third lines of \eqref{eq19}. The oscillation error will be defined below. Similar to \eqref{eq19}, we calculate\small{
\begin{align}\label{eq20}
&\nabla\times{\rm div}\mathring{R}_{q+1}^B\notag\\
&=(-\Delta)^\alpha d_{q+1}+\partial_t(d_{q+1}^p+d_{q+1}^c)+{\rm div}(\bar{u}_{q}\otimes d_{q+1}+w_{q+1}\otimes \bar{B}_{q}-\bar{B}_{q}\otimes w_{q+1}-d_{q+1}\otimes \bar{u}_{q})\notag\\
&\quad\quad\quad+\nabla\times{\rm div}(\bar{B}_{q}\otimes d_{q+1}+d_{q+1}\otimes \bar{B}_{q})\notag\\
&\quad+\nabla\times{\rm div}((d_{q+1}^c+d_{q+1}^t)\otimes d_{q+1}+d_{q+1}^p\otimes(d_{q+1}^c+d_{q+1}^t))+{\rm div}(w_{q+1}\otimes d_{q+1}-d_{q+1}\otimes w_{q+1})\notag\\
&\quad+\nabla\times{\rm div}(\mathring{\bar{R}}_{q}^B+d_{q+1}^p\otimes d_{q+1}^p)+\partial_td_{q+1}^t\notag\\
&=:\nabla\times{\rm div}(\tilde{R}^B_{linear}+\tilde{R}^B_{corrector}+\tilde{R}^B_{oscillation}),
\end{align}}
\!\!where the linear error is defined by applying $\mathcal{R}{\rm curl}^{-1}$ to the first and second lines of \eqref{eq20}, while the corrector error and oscillation error are defined by applying $\mathcal{R}{\rm curl}^{-1}$ to the second and third line of \eqref{eq20}, respectively. Next, we will bound these errors step by step.

By using \eqref{eq17}, \eqref{barH4}, \eqref{est10} and \eqref{est40}, one has
\begin{align}\label{est53}
&\|\tilde{R}^B_{linear}\|_{L^p}\lesssim\|d_{q+1}\|_{W^{2\alpha-2,p}}+\|\mathcal{R}{\rm curl}^{-1}\partial_t\sum_{i\geq0}\sum_{\xi\in\Lambda_{(i)+2}}
\nabla\times\nabla\times(a_\xi\bar{V}_\xi)\|_{L^p}\notag\\
&+\|\mathcal{R}{\rm curl}^{-1}{\rm div}\mathbb{P}_{\neq0}(\bar{u}_{q}\otimes d_{q+1}+w_{q+1}\otimes \bar{B}_{q}-\bar{B}_{q}\otimes w_{q+1}-d_{q+1}\otimes \bar{u}_{q})\|_{L^p}\notag\\
&+\|\bar{B}_{q}\otimes d_{q+1}+d_{q+1}\otimes \bar{B}_{q}\|_{L^p}\notag\\
&\lesssim\lambda_q^{\frac{\varepsilon_R}{32}}\tau_{q+1}^{-1}\ell_\perp^{\frac{2}{p}-1}\ell_\|^{\frac{1}{p}-\frac{1}{2}}\lambda_{q+1}^{2\alpha-2}
+\lambda_q^{\frac{\varepsilon_R}{32}}\ell_\perp^{\frac{2}{p}-1}\ell_\|^{\frac{1}{p}-\frac{1}{2}}\lambda_{q+1}^{-2}(\ell_\perp\ell_\|^{-1}\lambda_{q+1}\bar{\mu})
+\lambda_q^5\lambda_q^{\frac{\varepsilon_R}{32}}\tau_{q+1}^{-1}\ell_\perp^{\frac{2}{p}-1}\ell_\|^{\frac{1}{p}-\frac{1}{2}}\notag\\
&\lesssim\ell_\perp^{\frac{2}{p}-2}\ell_\|^{\frac{1}{p}-1}\lambda_q^{\frac{\varepsilon_R}{32}}
\left(\tau_{q+1}^{-1}\lambda_{q+1}^{-\frac{1}{3}(\frac{17}{4}-\alpha)}+\lambda_{q+1}^{-\frac{5}{12}(\frac{5}{4}-\alpha)}+\lambda_q^5\tau_{q+1}^{-1}
\lambda_{q+1}^{-\frac{5}{3}(\alpha-\frac{7}{20})}\right)\notag\\
&\lesssim\ell_\perp^{\frac{2}{p}-2}\ell_\|^{\frac{1}{p}-1}\lambda_{q+1}^{-\frac{5}{12}(\frac{5}{4}-\alpha)}\lambda_q^{\frac{\varepsilon_R}{32}},
\end{align}
where we have used the Calderon--Zygmund inequality and the following estimate
\begin{align}\label{schuarder est}
\||\nabla|^{-1}\mathbb{P}_{\neq0}\|_{L^p\rightarrow L^p}\lesssim1,
\end{align}
for $p\in(1,2]$, see \cite[Lemma B.1]{bv},  for example. Similarly, by \eqref{eq18}, \eqref{barH4}, \eqref{est9} and \eqref{est40}, we can  infer that
\begin{align}\label{est54}
&\|\tilde{R}^u_{linear}\|_{L^p}\lesssim\|w_{q+1}\|_{W^{2\alpha-1,p}}+\|\mathcal{R}\partial_t\sum_{i\geq0}\sum_{\xi\in\Lambda_{(i)}}
\nabla\times\nabla\times(a_\xi V_\xi)\|_{L^p}\notag\\
&+\|(\bar{u}_q,\bar{B}_q)\|_{L^\infty}\|(w_{q+1},d_{q+1})\|_{L^p}\notag\\
&\lesssim\lambda_q^{\frac{\varepsilon_R}{32}}\tau_{q+1}^{-1}\ell_\perp^{\frac{2}{p}-1}\ell_\|^{\frac{1}{p}-\frac{1}{2}}\lambda_{q+1}^{2\alpha-1}
+\lambda_q^{\frac{\varepsilon_R}{32}}\ell_\perp^{\frac{2}{p}-1}\ell_\|^{\frac{1}{p}-\frac{1}{2}}\lambda_{q+1}^{-1}(\ell_\perp\ell_\|^{-1}\lambda_{q+1}\mu)
+\lambda_q^5\lambda_q^{\frac{\varepsilon_R}{32}}\tau_{q+1}^{-1}\ell_\perp^{\frac{2}{p}-1}\ell_\|^{\frac{1}{p}-\frac{1}{2}}\notag\\
&\lesssim\ell_\perp^{\frac{2}{p}-2}\ell_\|^{\frac{1}{p}-1}\lambda_q^{\frac{\varepsilon_R}{32}}\tau_{q+1}^{-1}
\left(\lambda_{q+1}^{-\frac{1}{3}(\frac{5}{4}-\alpha)}+\lambda^5_q\lambda_{q+1}^{-\frac{5}{3}(\alpha-\frac{7}{20})}\right)\notag\\
&\lesssim\ell_\perp^{\frac{2}{p}-2}\ell_\|^{\frac{1}{p}-1}\lambda_{q+1}^{-\frac{1}{3}(\frac{5}{4}-\alpha)}\lambda_q^{\frac{\varepsilon_R}{32}}\tau_{q+1}^{-1},
\end{align}

Now we bound the corrector errors. Estimates \eqref{est33}--\eqref{est36} and \eqref{est40} yields that
\begin{align}\label{est55}
\|\tilde{R}^u_{corrector}\|_{L^p}&\lesssim\|w_{q+1}^c+w_{q+1}^t\|_{L^{2p}}(\|w_{q+1}\|_{L^{2p}}+\|w_{q+1}^p\|_{L^{2p}})\notag\\
&+\|d_{q+1}^c+d_{q+1}^t\|_{L^{2p}}(\|d_{q+1}\|_{L^{2p}}+\|d_{q+1}^p\|_{L^{2p}})\notag\\
&\lesssim\lambda_q^{\frac{\varepsilon_R}{16}}\tau_{q+1}^{-3}\ell_\perp^{\frac{2}{p}-2}\ell_\|^{\frac{1}{p}-1}\lambda_{q+1}^{-\frac{1}{2}(\frac{5}{4}-\alpha)}
+\lambda_q^{\frac{\varepsilon_R}{16}}\tau_{q+1}^{-1}\ell_\perp^{\frac{2}{p}-2}\ell_\|^{\frac{1}{p}-1}\lambda_{q+1}^{-\frac{5}{12}(\frac{5}{4}-\alpha)}\notag\\
&\lesssim\lambda_q^{\frac{\varepsilon_R}{16}}\tau_{q+1}^{-3}\ell_\perp^{\frac{2}{p}-2}\ell_\|^{\frac{1}{p}-1}\lambda_{q+1}^{-\frac{5}{12}(\frac{5}{4}-\alpha)}.
\end{align}
In terms of the corrector error $\tilde{R}^B_{corrector}$, note the fact that $w_{q+1}^p\otimes d_{q+1}^p=0$. Indeed, one has
\begin{align}\label{eq21}
w_{q+1}^p\otimes d_{q+1}^p=\sum_{i,j}\sum_{\xi\in\Lambda_{(i)},\xi'\in\Lambda_{(j)+2}}a_\xi a_{\xi'}W_\xi\otimes\bar{W}_{\xi'},
\end{align}
then using the disjoint support property of intermittent jets and the fact that $\Lambda_{(i)}\cap\Lambda_{(j)+2}=\emptyset$, we conclude $w_{q+1}^p\otimes d_{q+1}^p=0$. In view of Calderon--Zygmund inequality and the bound \eqref{schuarder est}, mirroring \eqref{est55}, we have
\begin{align}\label{est56}
\|\tilde{R}^B_{corrector}\|_{L^p}\lesssim
\lambda_q^{\frac{\varepsilon_R}{16}}\tau_{q+1}^{-3}\ell_\perp^{\frac{2}{p}-2}\ell_\|^{\frac{1}{p}-1}\lambda_{q+1}^{-\frac{5}{12}(\frac{5}{4}-\alpha)}.
\end{align}

Now, we focus on the estimate of $\tilde{R}^B_{oscillation}$. Recalling \eqref{eq14}, we compute\small{
\begin{align}\label{eq22}
&\nabla\times{\rm div}(\mathring{\bar{R}}_q^B+d_{q+1}\otimes d_{q+1})\notag\\
&=\nabla\times{\rm div}\left(\theta_B^2\sum_{i\geq0}\rho_i|\chi_i^B|^2{\rm Id}-\sum_{i\geq0}\sum_{\xi\in\Lambda_{(i)+2}}a^2_\xi\fint_{\mathbb{T}^3}\bar{W}_\xi\otimes \bar{W}_\xi dx\right)\notag\\
&\quad+\nabla\times{\rm div}\sum_{i,j}\sum_{\xi\in\Lambda_{(i)+2},\xi'\in\Lambda_{(j)+2}}a_\xi a_{\xi'}\bar{W}_\xi\otimes\bar{W}_{\xi'}\notag\\
&=\nabla\times{\rm div}\sum_{i\geq0}\sum_{\xi\in\Lambda_{(i)+2}}a_\xi^2\mathbb{P}_{\neq0}\left(\bar{W}_\xi\otimes\bar{W}_{\xi}\right)\notag\\
&=\nabla\times\sum_{i\geq0}\sum_{\xi\in\Lambda_{(i)+2}}\nabla(a^2_\xi)\mathbb{P}_{\neq0}\left(\bar{W}_\xi\otimes\bar{W}_{\xi}\right)
+a^2_\xi{\rm div}\left(\bar{W}_\xi\otimes\bar{W}_{\xi}\right)\notag\\
&=:E_{(\xi,1)}^B+E_{(\xi,2)}^B,
\end{align}}
\!\!where the second equality follows from the definition of $a_\xi$ in \eqref{axiB}, the disjoint support property of $\bar{W}_{\xi}$ and the fact that $\Lambda_2\cap\Lambda_3=\emptyset$. The term $E_{(\xi,1)}^B$ can be bounded similar to \cite[Section 5.1.3]{bcv}. More precisely, combining the fact that the frequency of $a_\xi$ is localized inside of the ball of radius $\tau_{q+1}^{-3}\ll\lambda_{q+1}\ell_\perp=\lambda_{q+1}^{-\frac{5}{6}(\alpha-\frac{5}{4})}$, and Lemma B.1 of \cite{bv} with parameter choices $\lambda=\tau_{q+1}^{-3}$, $C_a=\tau_{q+1}^{-2}$, $\kappa=\lambda_{q+1}\ell_\perp/2$ and $L$ sufficiently large, one has
\begin{align}\label{est57}
\|\mathcal{R}{\rm curl}^{-1}E_{(\xi,1)}^B\|_{L^p}&\lesssim\lambda_q^{\frac{\varepsilon_R}{32}}\||\nabla|^{-1}\mathbb{P}_{\neq0}\left(\nabla(a^2_\xi)
\mathbb{P}_{\geq\lambda_{q+1}\ell_\perp/2}\left(\bar{W}_\xi\otimes\bar{W}_{\xi}\right)\right)\|_{L^p}\notag\\
&\lesssim\lambda_q^{\frac{\varepsilon_R}{32}}\tau_{q+1}^{-2}\left(1+\frac{\tau_{q+1}^{-3L}}{(\lambda_{q+1}\ell_\perp)^{L-2}}\right)(\lambda_{q+1}\ell_\perp)^{-1}
\left\|\bar{W}_\xi\otimes\bar{W}_{\xi}\right\|_{L^p}\notag\\
&\lesssim\lambda_q^{\frac{\varepsilon_R}{32}}\tau_{q+1}^{-2}\ell_\perp^{\frac{2}{p}-2}\ell_\|^{\frac{1}{p}-1}\lambda_{q+1}^{-\frac{5}{6}(\frac{5}{4}-\alpha)}.
\end{align}
In view of \eqref{eq12}, $E_{(\xi,2)}^B$ can be written as
\begin{align*}
E_{(\xi,2)}^B&=\bar{\mu}^{-1}\nabla\times\sum_{i\geq0}\sum_{\xi\in\Lambda_{(i)+2}}a_\xi^2\phi^2_\xi\partial_t\bar{\psi}^2\xi\\
&=\bar{\mu}^{-1}\nabla\times\sum_{i\geq0}\sum_{\xi\in\Lambda_{(i)+2}}\partial_t\left(a_\xi^2\phi^2_\xi\bar{\psi}^2\xi\right)-\partial_t\left(a_\xi^2\right)\phi^2_\xi\bar{\psi}^2\xi.
\end{align*}
Recalling the definition of $d_{q+1}^t$ in \eqref{dq+1t}, we have
$$E_{(\xi,2)}^B+\partial_td_{q+1}^t=-\bar{\mu}^{-1}\nabla\times\sum_{i\geq0}\sum_{\xi\in\Lambda_{(i)+2}}\partial_t\left(a_\xi^2\right)\phi^2_\xi\bar{\psi}^2\xi.$$
Then, by estimates \eqref{est7}, \eqref{est8} and \eqref{est27}, one has
\begin{align}\label{est58}
\|\mathcal{R}{\rm curl}^{-1}(E_{(\xi,2)}^B+\partial_td_{q+1}^t)\|_{L^p}&\lesssim\bar{\mu}^{-1}\lambda_q^{\frac{\varepsilon_R}{32}}
\left\||\nabla|^{-1}\mathbb{P}_{\neq0}\left(\partial_t\left(a_\xi^2\right)\phi^2_\xi\bar{\psi}^2\xi\right)\right\|_{L^p}\notag\\
&\lesssim\bar{\mu}^{-1}\lambda_q^{\frac{\varepsilon_R}{32}}\tau_{q+1}^{-5}\ell_\perp^{\frac{2}{p}-2}\ell_\|^{\frac{1}{p}-1}.
\end{align}
Combining \eqref{eq22}--\eqref{est58} with the definition of $\tilde{R}^B_{oscillation}$ in \eqref{eq20}, we conclude that
\begin{align}\label{est59}
\|\tilde{R}^B_{oscillation}\|_{L^p}\lesssim\lambda_q^{\frac{\varepsilon_R}{32}}\tau_{q+1}^{-2}\ell_\perp^{\frac{2}{p}-2}\ell_\|^{\frac{1}{p}-1}
\lambda_{q+1}^{-\frac{5}{6}(\frac{5}{4}-\alpha)}.
\end{align}
Therefore, \eqref{est53}, \eqref{est56} and \eqref{est59} yields that
\begin{align}\label{est60}
\|\mathring{R}_{q+1}^B\|_{L^p}\lesssim\lambda_q^{\frac{\varepsilon_R}{16}}\tau_{q+1}^{-3}\ell_\perp^{\frac{2}{p}-2}\ell_\|^{\frac{1}{p}-1}
\lambda_{q+1}^{-\frac{5}{12}(\frac{5}{4}-\alpha)}.
\end{align}

Next, we deal with the term $\tilde{R}^u_{oscillation}$. Applying the equality \eqref{eq16}, the definitions of $G^B$ in \eqref{GB}, $w_{q+1}^p$ and $d_{q+1}^p$ in \eqref{wq+1p} and \eqref{dq+1p} respectively, and using the same argument of \eqref{eq22}, then one has\small{
\begin{align}\label{eq23}
&{\rm div}\left(\mathring{\bar{R}}_q^u+w_{q+1}\otimes w_{q+1}-d_{q+1}\otimes d_{q+1}\right)\notag\\
&={\rm div}\left(\theta_u^2\sum_{i\geq0}\rho_i|\chi_i^u|^2{\rm Id}-\sum_{i\geq0}\sum_{\xi\in\Lambda_{(i)}}a^2_\xi\fint_{\mathbb{T}^3}W_\xi\otimes W_\xi dx
+\sum_{i\geq0}\sum_{\xi\in\Lambda_{(i)+2}}a^2_\xi\fint_{\mathbb{T}^3}\bar{W}_\xi\otimes\bar{W}_\xi dx\right)\notag\\
&\quad+{\rm div}\left(\sum_{i\geq0}\sum_{\xi\in\Lambda_{(i)}}a^2_\xi W_\xi\otimes W_\xi
-\sum_{i\geq0}\sum_{\xi\in\Lambda_{(i)+2}}a^2_\xi\bar{W}_\xi\otimes\bar{W}_\xi \right)\notag\\
&=\nabla\left(\theta_u^2\sum_{i\geq0}\rho_i|\chi_i^u|^2\right)
+{\rm div}\left(\sum_{i\geq0}\sum_{\xi\in\Lambda_{(i)}}a^2_\xi\mathbb{P}_{\neq0}(W_\xi\otimes W_\xi)
-\sum_{i\geq0}\sum_{\xi\in\Lambda_{(i)+2}}a^2_\xi\mathbb{P}_{\neq0}(\bar{W}_\xi\otimes\bar{W}_\xi)\right)\notag\\
&=\nabla\left(\theta_u^2\sum_{i\geq0}\rho_i|\chi_i^u|^2\right)\notag\\
&\quad+\sum_{i\geq0}\sum_{\xi\in\Lambda_{(i)}}\mathbb{P}_{\neq0}\left(\nabla(a_\xi^2)\mathbb{P}_{\neq0}(W_\xi\otimes W_\xi)\right)
-\sum_{i\geq0}\sum_{\xi\in\Lambda_{(i)+2}}\mathbb{P}_{\neq0}\left(\nabla(a_\xi^2)\mathbb{P}_{\neq0}(\bar{W}_\xi\otimes \bar{W}_\xi)\right)\notag\\
&\quad+\sum_{i\geq0}\sum_{\xi\in\Lambda_{(i)}}\mathbb{P}_{\neq0}\left(a_\xi^2{\rm div}(W_\xi\otimes W_\xi)\right)
-\sum_{i\geq0}\sum_{\xi\in\Lambda_{(i)+2}}\mathbb{P}_{\neq0}\left(a_\xi^2{\rm div}(\bar{W}_\xi\otimes \bar{W}_\xi)\right)\notag\\
&=:\nabla\left(\theta_u^2\sum_{i\geq0}\rho_i|\chi_i^u|^2\right)+E_{\xi,1}^u+E_{\xi,2}^u,
\end{align}}
\!\!where $E_{\xi,1}^u$ and $E_{\xi,2}^u$ are the second and  third line of \eqref{eq23}, respectively. Similar to \eqref{est57}, we have
\begin{align}\label{est61}
\|\mathcal{R}E_{\xi,1}^u\|_{L^p}&\lesssim\lambda_q^{\frac{\varepsilon_R}{32}}\||\nabla|^{-1}\mathbb{P}_{\neq0}\left(\nabla(a_\xi^2)
\mathbb{P}_{\geq\lambda_{q+1}\ell_\perp/2}(W_\xi\otimes W_\xi)\right)\|_{L^p}\notag\\
&\quad+\lambda_q^{\frac{\varepsilon_R}{32}}\||\nabla|^{-1}\mathbb{P}_{\neq0}\left(\nabla(a_\xi^2)
\mathbb{P}_{\geq\lambda_{q+1}\ell_\perp/2}(\bar{W}_\xi\otimes \bar{W}_\xi)\right)\|_{L^p}\notag\\
&\lesssim\lambda_q^{\frac{\varepsilon_R}{32}}\tau_{q+1}^{-2}\ell_\perp^{\frac{2}{p}-2}\ell_\|^{\frac{1}{p}-1}
\lambda_{q+1}^{-\frac{5}{6}(\frac{5}{4}-\alpha)}.
\end{align}
In view of \eqref{eq11} and \eqref{eq12}, $E_{\xi,2}^u$ can be written as
\begin{align*}
E_{\xi,2}^u&=\mu^{-1}\sum_{i\geq0}\sum_{\xi\in\Lambda_{(i)}}\mathbb{P}_{\neq0}\left(a_\xi^2\phi^2_\xi\partial_t\psi^2_\xi\xi\right)
-\bar{\mu}^{-1}\sum_{i\geq0}\sum_{\xi\in\Lambda_{(i)+2}}\mathbb{P}_{\neq0}\left(a_\xi^2\phi^2_\xi\partial_t\bar{\psi}^2_\xi\xi\right)\\
&=\mu^{-1}\sum_{i\geq0}\sum_{\xi\in\Lambda_{(i)}}\partial_t\mathbb{P}_{\neq0}\left(a_\xi^2\phi^2_\xi\psi^2_\xi\xi\right)
-\bar{\mu}^{-1}\sum_{i\geq0}\sum_{\xi\in\Lambda_{(i)+2}}\partial_t\mathbb{P}_{\neq0}\left(a_\xi^2\phi^2_\xi\bar{\psi}^2_\xi\xi\right)\\
&-\mu^{-1}\sum_{i\geq0}\sum_{\xi\in\Lambda_{(i)}}\mathbb{P}_{\neq0}\left(\partial_t(a_\xi^2)\phi^2_\xi\psi^2_\xi\xi\right)
+\bar{\mu}^{-1}\sum_{i\geq0}\sum_{\xi\in\Lambda_{(i)+2}}\mathbb{P}_{\neq0}\left(\partial_t(a_\xi^2)\phi^2_\xi\bar{\psi}^2_\xi\xi\right),
\end{align*}
then by the definition of $\partial_tw_{q+1}^t$ in \eqref{wq+1t} and the fact that ${\rm Id}-\mathbb{P}_H=\nabla(\Delta^{-1}{\rm div})$, we have
\begin{align}\label{eq24}
&E_{\xi,2}^u+\partial_tw_{q+1}^t\notag\\
&=\mu^{-1}\sum_{i\geq0}\sum_{\xi\in\Lambda_{(i)}}({\rm Id}-\mathbb{P}_H)\partial_t\mathbb{P}_{\neq0}\left(a_\xi^2\phi^2_\xi\psi^2_\xi\xi\right)
-\bar{\mu}^{-1}\sum_{i\geq0}\sum_{\xi\in\Lambda_{(i)+2}}({\rm Id}-\mathbb{P}_H)\partial_t\mathbb{P}_{\neq0}\left(a_\xi^2\phi^2_\xi\bar{\psi}^2_\xi\xi\right)\notag\\
&\quad-\mu^{-1}\sum_{i\geq0}\sum_{\xi\in\Lambda_{(i)}}\mathbb{P}_{\neq0}\left(\partial_t(a_\xi^2)\phi^2_\xi\psi^2_\xi\xi\right)
+\bar{\mu}^{-1}\sum_{i\geq0}\sum_{\xi\in\Lambda_{(i)+2}}\mathbb{P}_{\neq0}\left(\partial_t(a_\xi^2)\phi^2_\xi\bar{\psi}^2_\xi\xi\right)\notag\\
&=\nabla q-\mu^{-1}\sum_{i\geq0}\sum_{\xi\in\Lambda_{(i)}}\mathbb{P}_{\neq0}\left(\partial_t(a_\xi^2)\phi^2_\xi\psi^2_\xi\xi\right)
+\bar{\mu}^{-1}\sum_{i\geq0}\sum_{\xi\in\Lambda_{(i)+2}}\mathbb{P}_{\neq0}\left(\partial_t(a_\xi^2)\phi^2_\xi\bar{\psi}^2_\xi\xi\right),
\end{align}
where
\begin{align*}
q=\Delta^{-1}{\rm div}\left(\mu^{-1}\sum_{i\geq0}\sum_{\xi\in\Lambda_{(i)}}\partial_t\mathbb{P}_{\neq0}\left(a_\xi^2\phi^2_\xi\psi^2_\xi\xi\right)
-\bar{\mu}^{-1}\sum_{i\geq0}\sum_{\xi\in\Lambda_{(i)+2}}\partial_t\mathbb{P}_{\neq0}\left(a_\xi^2\phi^2_\xi\bar{\psi}^2_\xi\xi\right)\right)
\end{align*}
is the pressure term. Similar to \eqref{est58}, using \eqref{eq24}, \eqref{est6}--\eqref{est8}, \eqref{est27} and \eqref{est30}, we can estimate the second contribution to $\tilde{R}^u_{oscillation}$ by
\begin{align}\label{est62}
&\left\|\mu^{-1}\sum_{i\geq0}\sum_{\xi\in\Lambda_{(i)}}\mathcal{R}\mathbb{P}_{\neq0}\left(\partial_t(a_\xi^2)\phi^2_\xi\psi^2_\xi\xi\right)
-\bar{\mu}^{-1}\sum_{i\geq0}\sum_{\xi\in\Lambda_{(i)+2}}\mathcal{R}\mathbb{P}_{\neq0}\left(\partial_t(a_\xi^2)\phi^2_\xi\bar{\psi}^2_\xi\xi\right)\right\|_{L^p}\notag\\
&\lesssim\lambda_q^{\frac{\varepsilon_R}{32}}\left(\mu^{-1}\tau_{q+1}^{-5}\ell_\perp^{\frac{2}{p}-2}\ell_\|^{\frac{1}{p}-1}
+\bar{\mu}^{-1}\tau_{q+1}^{-5}\ell_\perp^{\frac{2}{p}-2}\ell_\|^{\frac{1}{p}-1}\right)\notag\\
&\lesssim\lambda_q^{\frac{\varepsilon_R}{32}}\tau_{q+1}^{-5}\ell_\perp^{\frac{2}{p}-2}\ell_\|^{\frac{1}{p}-1}\mu^{-1}.
\end{align}
Combining \eqref{eq23}--\eqref{est62}, one infers that
\begin{align}\label{est63}
\left\|\tilde{R}^u_{oscillation}\right\|_{L^p}\lesssim\lambda_q^{\frac{\varepsilon_R}{32}}\tau_{q+1}^{-2}\ell_\perp^{\frac{2}{p}-2}\ell_\|^{\frac{1}{p}-1}
\lambda_{q+1}^{-\frac{5}{6}(\frac{5}{4}-\alpha)}.
\end{align}
Thus, recalling estimates \eqref{est54}, \eqref{est55} and \eqref{est60}, we have
\begin{align*}
\left\|(\tilde{R}^u,\mathring{R}_{q+1}^B)\right\|_{L^p}\lesssim\ell_\perp^{\frac{2}{p}-2}\ell_\|^{\frac{1}{p}-1}
\left(\lambda_q^{\frac{\varepsilon_R}{16}}\tau_{q+1}^{-3}\lambda_{q+1}^{-\frac{5}{12}(\frac{5}{4}-\alpha)}+
\lambda_q^{\frac{\varepsilon_R}{32}}\tau_{q+1}^{-1}\lambda_{q+1}^{-\frac{1}{3}(\frac{5}{4}-\alpha)}\right)\leq\lambda_{q+1}^{-2\varepsilon_R}\delta_{q+2},
\end{align*}
for $p$ sufficiently close to $1$, and $2(\varepsilon_R+\beta b)\leq\frac{1}{12}(\frac{5}{4}-\alpha)$. Hence, we conclude \eqref{est52}.

Finally, we prove the temporal support property of $(\tilde{R}^u,\mathring{R}_{q+1}^B)$. By the definitions \eqref{eq19} and \eqref{eq20}, we have
\begin{align}\label{supp1}
{\rm supp}(\tilde{R}^u,\mathring{R}_{q+1}^B)\subset&{\rm supp}w_{q+1}^p\cup{\rm supp}w_{q+1}^c\cup{\rm supp}w_{q+1}^t\cup{\rm supp}\mathring{\bar{R}}_{q}^u\notag\\
&\cup{\rm supp}d_{q+1}^p\cup{\rm supp}d_{q+1}^c\cup{\rm supp}d_{q+1}^t\cup{\rm supp}\cup\mathring{\bar{R}}_{q}^B.
\end{align}
By using the definition of $a_\xi$ in \eqref{axiu} and \eqref{axiB}, $\xi \in\Lambda_{(i)}$ or $\Lambda_{(i)+2}$, we obtain $a_\xi=0$ whenever ${\rm dist}(t,\mathscr{G}^{(q+1)})\leq\tau_{q+1}$. In view of \eqref{bar supp}, \eqref{supp1} yields $(\tilde{R}^u,\mathring{R}_{q+1}^B)=0$ whenever ${\rm dist}(t,\mathscr{G}^{(q+1)})\leq\tau_{q+1}$, which implies \eqref{supp}.
\end{proof}

As a direct consequence of Proposition \ref{prop2}, we have the following Corollary, which will complete the inductive estimates \eqref{RestL1} and \eqref{RestH4} at the level $q+1$.
\begin{Corollary}
There exists traceless 2--tensor $(\mathring{R}_{q+1}^u,\mathring{R}_{q+1}^B)$ and a scalar pressure field $p_{q+1}$ such that $(u_{q+1},B_{q+1},\mathring{R}_{q+1}^u,\mathring{R}_{q+1}^B)$ solves the system \eqref{Hall-MHD2} at the level $q+1$. Moreover, the following estimates holds:
\begin{align}
\|(\mathring{R}_{q+1}^u,\mathring{R}_{q+1}^B)\|_{L^1}&\leq\lambda_{q+1}^{-\varepsilon_R\delta_{q+2}},\label{est64}\\
\|(\mathring{R}_{q+1}^u,\mathring{R}_{q+1}^B)\|_{H^4}&\leq\lambda_{q+1}^7,\label{est65}
\end{align}
and $(\mathring{R}_{q+1}^u,\mathring{R}_{q+1}^B)=0$ whenever ${\rm dist}(t,\mathscr{G}^{(q+1)})\leq\tau_{q+1}$.
\end{Corollary}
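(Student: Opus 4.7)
I would deduce the corollary almost entirely from Proposition \ref{prop2}. First, define $\mathring{R}_{q+1}^u := \tilde R^u - \tfrac{1}{3}(\mathrm{tr}\,\tilde R^u)\,\mathrm{Id}$, so that $\mathring{R}_{q+1}^u$ is symmetric and traceless; the scalar $\tfrac{1}{3}\mathrm{tr}\,\tilde R^u$ is absorbed into the new pressure $p_{q+1}$ together with $\tilde p$, and the equation \eqref{Hall-MHD2} at level $q+1$ is preserved. The magnetic stress $\mathring{R}_{q+1}^B$ delivered by Proposition \ref{prop2} is already traceless and symmetric, so no modification is needed. Passing to the traceless part affects neither the $L^1$ nor the $H^4$ norm up to a fixed dimensional constant, and the temporal support property \eqref{supp} transfers verbatim to $(\mathring{R}_{q+1}^u,\mathring{R}_{q+1}^B)$, immediately giving the required vanishing near $\mathscr G^{(q+1)}$ and hence property (\romannumeral6) at level $q+1$.

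For the $L^1$ bound \eqref{est64} I would use the compactness of $\mathbb T^3$: H\"older's inequality yields $\|\cdot\|_{L^1(\mathbb T^3)}\lesssim\|\cdot\|_{L^p(\mathbb T^3)}$ for the exponent $p>1$ fixed in Proposition \ref{prop2}. Combining with \eqref{est52} and using $\lambda_{q+1}^{-2\varepsilon_R}\delta_{q+2}\ll\lambda_{q+1}^{-\varepsilon_R}\delta_{q+2}$ once $a\geq a_0$ is chosen large enough to absorb the implicit constant into a factor $\lambda_{q+1}^{\varepsilon_R}$, the desired bound follows.

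For the $H^4$ bound \eqref{est65} I would work directly with the decompositions \eqref{eq19}--\eqref{eq20} rather than through Proposition \ref{prop2}, rerunning the $L^p$ arguments at $p=2$ with four additional derivatives. The Calder\'on--Zygmund boundedness of $|\nabla|\mathcal R$ and of $\mathbb P_{\neq 0}|\nabla|^{-1}$ on $L^2$ (and hence on every $H^s$ via the Fourier multiplier characterization) allows me to pass between the $\mathcal R,\mathcal R\,\mathrm{curl}^{-1}$ forms of each error and its raw version at no regularity cost. For the linear errors, I would invoke the high-regularity bounds \eqref{est48}--\eqref{est49} on $(u_{q+1},B_{q+1})$ in $H^{11/2}$ and $H^{9/2}$, together with the time-derivative bounds \eqref{est50}--\eqref{est51} in $H^3,H^2$; since $3+2\alpha<11/2$ the dissipative term $(-\Delta)^\alpha$ is safely handled. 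The corrector and oscillation errors, which are multilinear in the perturbation components, are bounded by Leibniz's rule using the $W^{N,p}$ estimates \eqref{est33}--\eqref{est36} at $p=2$ and the $C^N_{t,x}$ estimates \eqref{est27},\eqref{est30} on the amplitudes $a_\xi$, with the oscillatory factors $\psi_\xi,\bar\psi_\xi,\phi_\xi$ contributing via \eqref{est6}--\eqref{est10}.

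The main obstacle I anticipate is the bookkeeping in the last paragraph: each contribution produces a polynomial $\lambda_{q+1}^{A}\tau_{q+1}^{-B}\ell_\perp^C\ell_\|^D$ with $A,B,C,D$ depending on how derivatives are distributed, and I need to verify that after enforcing the parameter constraints \eqref{paracon} and using \eqref{vqtqest}, the overall exponent of $\lambda_{q+1}$ stays strictly below $7$, so that the small prefactor $\lambda_q^{\varepsilon_R/32}$ and the choice of $a\geq a_0$ close the gap. This is a crude bound compared with \eqref{est52} (losing four derivatives of positive exponent), which is why \eqref{est65} permits the much larger $\lambda_{q+1}^7$; the point is that the same structural arguments that yielded Proposition \ref{prop2} survive at the $H^4$ level with plenty of room.
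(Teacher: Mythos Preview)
Your proposal is correct in spirit, but the paper takes a noticeably cleaner route for the $H^4$ estimate that avoids the bookkeeping you anticipate. The key difference is the \emph{definition} of the velocity stress: rather than taking the traceless part of $\tilde R^u$ (which is already traceless, so your $\mathring R_{q+1}^u$ would simply be $\tilde R^u$), the paper sets
\[
\mathring R_{q+1}^u:=\mathcal R\,\mathbb P_H\,{\rm div}\,\tilde R^u,\qquad p_{q+1}:=\tilde p-\Delta^{-1}{\rm div}\,{\rm div}\,\tilde R^u.
\]
This has the same divergence as $\tilde R^u$ modulo a gradient, so the equation at level $q+1$ is preserved and the $L^1$ and support arguments go through exactly as you describe. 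But the payoff is that now $\|\mathring R_{q+1}^u\|_{H^4}\lesssim\|\mathbb P_H{\rm div}\,\tilde R^u\|_{H^3}$, and by the equation itself
\[
\mathbb P_H{\rm div}\,\tilde R^u=\partial_t u_{q+1}+(-\Delta)^\alpha u_{q+1}+\mathbb P_H{\rm div}(u_{q+1}\otimes u_{q+1}-B_{q+1}\otimes B_{q+1}),
\]
so the entire $H^4$ bound follows in one line from \eqref{est48}--\eqref{est51} and a single Gagliardo--Nirenberg product estimate, giving $\lesssim\lambda_{q+1}^6+\lambda_{q+1}^{55/8}\leq\lambda_{q+1}^7$. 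The magnetic stress is handled identically via $\mathcal R\,{\rm curl}^{-1}$. This bypasses completely the term-by-term $W^{4,2}$ analysis of the linear, corrector, and oscillation errors that you propose; in particular it never revisits the decompositions \eqref{eq19}--\eqref{eq20}. Your approach would still succeed (the exponent tracking you sketch does close below $7$ once \eqref{paracon} is enforced), but the paper's redefinition trades that labor for one application of the PDE and the high-regularity bounds already prepared at the end of Section~\ref{sec4}.
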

\begin{proof}
Define
\begin{align*}
\mathring{R}_{q+1}^u=\mathcal{R}\mathbb{P}_{H}{\rm div}\tilde{R}^u,\quad p_{q+1}=\tilde{p}-\Delta^{-1}{\rm div}{\rm div}\tilde{R}^u,
\end{align*}
then $(u_{q+1},B_{q+1},\mathring{R}_{q+1}^u,\mathring{R}_{q+1}^B)$ satisfies system \eqref{Hall-MHD2} with pressure $p_{q+1}$. Since the operator $\mathcal{R}\mathbb{P}_{H}{\rm div}$ is time independent, by Proposition \ref{prop2}, we have
$$(\mathring{R}_{q+1}^u,\mathring{R}_{q+1}^B)=0\;{\rm whenever} \;{\rm dist}(t,\mathscr{G}^{(q+1)})\leq\tau_{q+1},$$
which implies (\romannumeral6) in Section \ref{sec2} holds at level $q+1$. Recall $p>1$ in Proposition \ref{prop2} and the bound $\|\mathcal{R}\mathbb{P}_{H}{\rm div}\|_{L^p\rightarrow L^p}\lesssim1$, by \eqref{est52}, one has
$$\|(\mathring{R}_{q+1}^u,\mathring{R}_{q+1}^B)\|_{L^1}\leq\|(\mathring{R}_{q+1}^u,\mathring{R}_{q+1}^B)\|_{L^p}
\lesssim\|(\tilde{R}^u,\mathring{R}_{q+1}^B)\|_{L^p}\lesssim\lambda_{q+1}^{-2\varepsilon_R}\delta_{q+2}\leq\lambda_{q+1}^{-\varepsilon_R}\delta_{q+2},$$
which implies that \eqref{est64}.

Next, we prove \eqref{est65}. Since ${\rm supp}(\mathring{R}_{q+1}^u,\mathring{R}_{q+1}^B)\subset\mathbb{T}^3\times[T/3,2T/3]$, then by the system \eqref{Hall-MHD2}, estimates \eqref{est48}--\eqref{est51} and the Gagliardo--Nirenberg--Sobolev inequality, we have\small{
\begin{align*}
&\|\mathring{R}_{q+1}^u\|_{H^4}=\|\mathcal{R}\mathbb{P}_{H}{\rm div}\tilde{R}^u\|_{H^4}\\
&\lesssim\|\partial_tu_{q+1}+(-\Delta)^\alpha u_{q+1}+{\rm div}(u_{q+1}\otimes u_{q+1}-B_{q+1}\otimes B_{q+1})\|_{L^\infty(\frac{T}{3},\frac{2T}{3};H^3)}\\
&\lesssim\|\partial_tu_{q+1}\|_{L^\infty(\frac{T}{3},\frac{2T}{3};H^3)}+\|u_{q+1}\|_{L^\infty(\frac{T}{3},\frac{2T}{3};H^{\frac{11}{2}})}
+\|(u_{q+1},B_{q+1})\|_{L^2}^{\frac{5}{8}}\|(u_{q+1},B_{q+1})\|_{H^4}^{\frac{11}{8}}\\
&\lesssim\lambda_{q+1}^6+(2\delta_0^{\frac{1}{2}}-\delta_{q+1}^{\frac{1}{2}})^{\frac{5}{8}}\lambda_{q+1}^{\frac{55}{8}}\leq\lambda_{q+1}^7,\\
&\|\mathring{R}_{q+1}^B\|_{H^4}
\lesssim\|\partial_tB_{q+1}+(-\Delta)^\alpha B_{q+1}+{\rm div}(u_{q+1}\otimes B_{q+1}-B_{q+1}\otimes u_{q+1})\\
&\quad\quad\quad\quad\quad\quad\quad+\nabla\times{\rm div}(B_{q+1}\otimes B_{q+1})\|_{L^\infty(\frac{T}{3},\frac{2T}{3};H^2)}\\
&\lesssim\|\partial_tB_{q+1}\|_{L^\infty(\frac{T}{3},\frac{2T}{3};H^2)}+\|B_{q+1}\|_{L^\infty(\frac{T}{3},\frac{2T}{3};H^{\frac{9}{2}})}
+\|(u_{q+1},B_{q+1})\|_{L^2}^{\frac{5}{8}}\|(u_{q+1},B_{q+1})\|_{H^4}^{\frac{11}{8}}\\
&\lesssim\lambda_{q+1}^6+(2\delta_0^{\frac{1}{2}}-\delta_{q+1}^{\frac{1}{2}})^{\frac{5}{8}}\lambda_{q+1}^{\frac{55}{8}}\leq\lambda_{q+1}^7.
\end{align*}}
\end{proof}
\appendix
\section{}\label{sec6}
\begin{proof}[Proof of Lemma \ref{lemma}]
Let $e_1=(1,0,0)$, $e_2=(0,1,0)$, $e_3=(0,0,1)$, and $\Lambda_0=\{\frac{3}{5}e_1\pm \frac{4}{5}e_2,\frac{4}{5}e_1\pm \frac{3}{5}e_3,\frac{3}{5}e_2\pm \frac{4}{5}e_3\}$. The orthonormal bases are given by
\begin{table}[H]
\begin{tabular}{c|c|c}
  $\xi$ & $A_\xi$  & $\xi\times A_\xi$  \\\hline
  $\frac{3}{5}e_1\pm \frac{4}{5}e_2$& $\frac{4}{5}e_1\mp \frac{3}{5}e_2$ & $e_3$\\
  $\frac{4}{5}e_1\pm \frac{3}{5}e_3$& $\frac{3}{5}e_1\mp \frac{4}{5}e_3$ & $e_2$\\
  $\frac{3}{5}e_2\pm \frac{4}{5}e_3$& $\frac{4}{5}e_2\mp \frac{3}{5}e_3$ & $e_1$\\
\end{tabular}
\end{table}
Then we have $\sum_{\xi \in\Lambda_0}\frac{1}{2}\xi\otimes\xi={\rm Id}$. The implicit function theorem implies that there exists a small constant $\delta_0$ such that for all symmetric matrices $R$ with $|R-{\rm Id}|<\delta_0$, it holds
$$R=\sum_{\xi\in \Lambda_0}\gamma^2_{\xi}(R)(\xi\otimes\xi),$$
for some smooth functions $\gamma_\xi$ with $\gamma^2_{\xi}({\rm Id})=\frac{1}{2}$. For more details, see \cite{bdis}.

Similarly, taking
\begin{align*}
\Lambda_1=\{\frac{5}{13}e_1\pm \frac{12}{13}e_2,\frac{12}{13}e_1\pm \frac{5}{13}e_3,\frac{5}{13}e_2\pm \frac{12}{13}e_3\},\\
\Lambda_2=\{\frac{7}{25}e_1\pm \frac{24}{25}e_2,\frac{24}{25}e_1\pm \frac{7}{25}e_3,\frac{7}{25}e_2\pm \frac{24}{25}e_3\},\\
\Lambda_3=\{\frac{8}{17}e_1\pm \frac{15}{17}e_2,\frac{15}{17}e_1\pm \frac{8}{17}e_3,\frac{8}{17}e_2\pm \frac{15}{17}e_3\},
\end{align*}
and repeating the above process, we can conclude that the same conclusion holds for $\Lambda_\alpha$, $\alpha=1,2,3$. Moreover, one has $\Lambda_{\alpha_1}\cap\Lambda_{\alpha_2}=\emptyset$ for $\alpha_1\neq\alpha_2$.
\end{proof}

\end{document}